\numberwithin{equation}{section}
\theoremstyle{plain}
\newtheorem{theorem}{Theorem}[section]
\newtheorem{theoremintro}{Theorem}
\newtheorem{lemma}[theorem]{Lemma}
\newtheorem{prop}[theorem]{Proposition}
\newtheorem{corollary}[theorem]{Corollary}
\theoremstyle{definition}
\newtheorem{definition}[theorem]{Definition}
\newtheorem{remark}[theorem]{Remark}
\newtheorem{example}[theorem]{Example}
\newcommand{\s}{\sigma}
\newcommand{\N}{\mathbb N}
\newcommand{\Z}{\mathbb Z}
\renewcommand{\L}{\mathbb L}
\renewcommand{\P}{\mathbb P}
\newcommand{\X}{\mathbb X}
\newcommand{\Y}{\mathbb Y}
\renewcommand{\AA}{\mathcal A}
\newcommand{\DD}{\mathcal D}
\renewcommand{\SS}{\mathcal S}
\newcommand{\EE}{\mathcal E}
\newcommand{\GG}{\mathcal G}
\newcommand{\HH}{\mathcal H}
\newcommand{\LL}{\mathcal L}
\newcommand{\TT}{\mathcal T}
\newcommand{\UU}{\mathcal U}
\renewcommand{\O}{\mathcal O}
\renewcommand{\k}{\Bbbk}
\newcommand{\xra}{\xrightarrow}
\renewcommand{\le}{\leqslant}
\renewcommand{\ge}{\geqslant}
\newcommand{\bul}{\bullet}
\newcommand{\modd}{\mathrm{mod{-}}}
\newcommand{\moddo}{\mathrm{mod}_0{-}}
\newcommand{\Modd}{\mathrm{Mod{-}}}
\DeclareMathOperator{\Hom}{\textup{Hom}}
\DeclareMathOperator{\Ext}{\textup{Ext}}
\DeclareMathOperator{\End}{\mathrm{End}}
\DeclareMathOperator{\Ob}{\mathrm{Ob}}
\DeclareMathOperator{\coker}{\mathrm{coker}} 
\DeclareMathOperator{\rep}{\mathrm{rep}}
\DeclareMathOperator{\wdtrep}{\widetilde{\mathrm{rep}}}
\DeclareMathOperator{\im}{\mathrm{im}}
\DeclareMathOperator{\rank}{\mathrm{rank}}
\DeclareMathOperator{\coh}{\mathrm{coh}}
\DeclareMathOperator{\Perf}{\mathrm{Perf}}
\DeclareMathOperator{\id}{\mathrm{id}}
\DeclareMathOperator{\op}{\mathrm{op}}
\DeclareMathOperator{\Proj}{\mathrm{Proj}}
\DeclareMathOperator{\Supp}{\mathrm{Supp}}
\DeclareMathOperator{\Top}{\mathrm{Top}}
\DeclareMathOperator{\len}{\mathrm{length}}
\DeclareMathOperator{\add}{\mathrm{add}}
\DeclareMathOperator{\Ab}{\mathrm{Ab}}
\DeclareMathOperator{\rad}{\mathrm{rad}}
\begin{document}


\title[Thick subcategories on weighted projective curves]{Thick subcategories on weighted projective curves and nilpotent representations of quivers}

\author{Alexey ELAGIN}
\address{University of Sheffield, School of Mathematical and Physical Sciences, 
 The Hicks Building,
 Hounsfield Road,
 Sheffield, UK.
 S3 7RH}
\email{alexey.elagin@gmail.com}

\begin{abstract}
    We continue the study of thick triangulated subcategories, started by Valery Lunts and the author in ``Thick subcategories on curves'', and consider  thick  subcategories in the derived category of coherent sheaves on a weighted projective curve and the corresponding abelian thick subcategories. Our main result is that any thick subcategory on a weighted projective curve either is equivalent to the derived  category of nilpotent representations of some quiver (we call such categories quiver-like) or is the orthogonal subcategory to an exceptional collection of torsion sheaves (we call such subcategories big).  We examine the structure of thick subcategories: in particular, for weighted projective lines, we prove that any admissible subcategory is generated by an exceptional collection and any exceptional collection is a part of a full one. We show that the derived categories of weighted projective curves satisfy the Jordan--H\"older property and do not contain phantoms.  Finally, we extend and simplify results from loc.\,cit., providing sufficient criteria for a triangulated or abelian category to be quiver-like. 
\end{abstract}
\keywords{Triangulated categories, thick subcategories, coherent sheaves on curves, orbifold curves, quiver representations}

\maketitle

\section{Introduction}

This paper is devoted to the classification problem of thick triangulated subcategories in a given triangulated category, which attracted a lot of attention in the last decades.
We continue the work started in our paper \cite{EL} with Valery Lunts, where thick subcategories in the derived category $D^b(\coh X)$ of coherent sheaves on a smooth projective curve $X$ were studied. Recall that a full subcategory $\TT\subset D^b(\coh X)$ is called \emph{thick} if $\TT$ is triangulated and closed under taking direct summands in $D^b(\coh X)$. 
\begin{definition}
Let $Q$ be a quiver and $\Modd\k Q$ be the category of right representations of $Q$ over a field $\k$. Denote by $D^b_0(\k Q)$ the triangulated subcategory in $D^b(\Modd \k Q)$, generated by the simple modules concentrated in the vertices of $Q$. We call triangulated categories, equivalent to a category of the form $D^b_0(\k Q)$, \emph{quiver-like}.
\end{definition}
The main result in \cite{EL} is the following
\begin{theorem}
\label{th_intro0}
Let $X$ be a smooth projective curve over an algebraically closed field. Then any finitely generated thick subcategory in $D^b(\coh X)$, different from $D^b(\coh X)$, is \emph{quiver-like}.
\end{theorem}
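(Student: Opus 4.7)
The plan is to realize $\TT$ as the bounded derived category of an abelian subcategory $\AA\subset\coh X$ which turns out to be equivalent to the category of finite-dimensional nilpotent representations of a quiver $Q$. I would proceed in three steps.

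\textbf{Step 1 (Abelian heart).} Since $\coh X$ is hereditary (global dimension $1$), every object of $D^b(\coh X)$ splits as a direct sum of shifts of its cohomology sheaves, so any thick subcategory is determined by the sheaves it contains. Setting $\AA := \TT\cap\coh X$, I would verify that $\AA$ is an extension-closed abelian subcategory of $\coh X$ (closed under kernels and cokernels computed in $\coh X$) and that the natural functor $D^b(\AA)\to\TT$ is an equivalence. This reduces the problem to describing $\AA$.

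\textbf{Step 2 (Finiteness).} Next I would show that $\AA$ is finite-length with only finitely many isomorphism classes of simples. Picking a finite set of generators $G_1,\dots,G_n$ of $\TT$, the cohomology sheaves of the $G_i$ have torsion parts supported on a finite set $Z\subset X$ and locally free parts of bounded rank. The torsion part of $\AA$ is thus supported on a finite subset of $X$; the properness hypothesis $\TT\ne D^b(\coh X)$ is what prevents an infinite family of skyscrapers from lying in $\AA$, since $\O_x\in\TT$ for infinitely many $x$ together with a single positive-rank object would force $\TT=D^b(\coh X)$ via Koszul-type resolutions of a generic vector bundle by its restrictions to closed points. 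A parallel analysis handles simples arising from the locally free part, producing a finite list $S_1,\dots,S_m$ of simples of~$\AA$.

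\textbf{Step 3 (Quiver description).} Let $Q$ be the quiver on vertices $\{1,\dots,m\}$ with $\dim_\k\Ext^1_{\coh X}(S_i,S_j)$ arrows from $i$ to $j$. A choice of basis of each $\Ext^1$-group yields a functor from $\AA$ to finite-dimensional nilpotent $\k Q$-representations, built by induction on length: each $A\in\AA$ of length $\ell$ fits in a short exact sequence $0\to A'\to A\to S_i\to 0$ with $A'$ of length $\ell-1$, and the attaching classes recursively determine the action of the arrows. This is an equivalence by the standard classification of finite-length abelian categories via Ext-quiver data. Finally, since both $D^b(\AA)$ and $D^b_0(\k Q)$ are generated inside their ambient categories by the simples, the abelian equivalence lifts to the required $\TT=D^b(\AA)\simeq D^b_0(\k Q)$.

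The main obstacle is Step~2: the mixed case in which $\AA$ contains both a family of torsion sheaves and a bundle of positive rank is delicate, and requires a careful interplay between the finite-generation hypothesis, which bounds ranks and supports of generators, and the properness hypothesis, which prevents the simples from ranging over a Zariski-dense set of closed points of $X$.
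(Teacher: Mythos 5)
Your overall architecture (pass to the abelian heart $\AA=\TT\cap\coh X$ via heredity, show $\AA$ has finite length, then identify $\AA$ with nilpotent representations of the $\Ext$-quiver of its simples) is exactly the strategy of the paper, but two of your three steps contain genuine gaps.

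In Step 2 the inference ``the generators' torsion parts are supported on a finite set, hence the torsion part of $\AA$ is supported on a finite set'' is false: a thick subcategory is closed under cokernels, so two line bundles already produce torsion sheaves at every point where a map between them degenerates (e.g.\ $\langle\O,\O(1)\rangle=D^b(\coh\P^1)$ contains all skyscrapers). The statement you actually need, and which you defer as a ``delicate mixed case,'' is that the mixed case is \emph{impossible}: a thick $\SS\subsetneq\coh X$ cannot contain both a non-zero torsion sheaf and a non-zero vector bundle, because one torsion sheaf together with one bundle already generates all of $\coh X$ (this is \cite[Lemma 4.1]{EL}, invoked in the proof of Lemma~\ref{lemma_4to1}; your weaker observation about infinitely many skyscrapers does not suffice). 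Only this dichotomy gives you a strictly positive additive function on $K_0(\SS)$ --- length in the torsion case, rank in the torsion-free case --- which is what makes every object of $\AA$ (not just the generators) of finite length, as in Corollary~\ref{cor_39}. Without it, Step 2 does not close.

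In Step 3, the ``standard classification of finite-length abelian categories via Ext-quiver data'' that you appeal to does not exist in the generality you need: a finite-length category with prescribed $\Ext$-quiver of simples is not in general equivalent to $\moddo\k Q$ (the relations of the associated algebra are governed by $\Ext^2$ and higher Massey products), and even granting $\Ext^{\ge 2}=0$ between simples one cannot reconstruct the equivalence by a Morita-type argument, since $\moddo\k Q$ has no projectives when $Q$ has cycles (the torsion case here is exactly a tube). Your induction on length produces attaching classes for each chosen filtration but does not produce a well-defined functor. This identification is precisely the hardest part of the theorem; the paper handles it by passing to the derived category and using the $A_\infty$-minimal-model/degree argument of Proposition~\ref{prop_39} and Theorem~\ref{th_39} (in \cite{EL}, by a dg enhancement plus a formality hypothesis), and Remark~\ref{rem_uniquemoddo} together with the remark after Theorem~\ref{th_39ab} explicitly records that no direct abelian-level proof is known. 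As written, Step 3 asserts the conclusion rather than proving it.
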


In the present paper we extend and generalise results of \cite{EL} in several directions.
First, we obtain a more transparent and general sufficient condition for a triangulated category to be quiver-like. More precisely, we prove
\begin{theoremintro}[See {Theorem~\ref{th_39}}]
\label{th_introA}
Let $\TT$ be an algebraic triangulated category, linear over a field $\k$. Assume that $\TT$ is classically generated by a set $\{t_i\}_{i\in I}$ of objects, and for all $i,j\in I$
\begin{equation}
\label{eq_vertlikeintro}
\begin{aligned}
    &\Hom^p(t_i,t_j)=0, &&\text{if $p\ne 0,1$,}\\
    &\Hom(t_i,t_j)=0,  && \text{if $i\ne j$,}\\
    &\Hom(t_i,t_i)=\k. &&
\end{aligned}
\end{equation}
Then $\TT$ is quiver-like and $t_i$-s correspond to simple modules.
\end{theoremintro}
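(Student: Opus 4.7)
The strategy is to read off a quiver $Q$ from the given $\Ext$ data and to match $\TT$ with $D^b_0(\k Q)$ via the DG enhancement supplied by the algebraicity hypothesis. Let $Q$ have vertex set $I$ and, for each pair $(i,j)\in I\times I$, carry $\dim_\k\Hom^1_\TT(t_i,t_j)$ arrows from $i$ to $j$. The simple $\k Q$-modules $S_i$ then satisfy the same orthogonality and $\Ext^1$-relations \eqref{eq_vertlikeintro} as the $t_i$, and by definition classically generate $D^b_0(\k Q)$; the goal is an equivalence $\TT\simeq D^b_0(\k Q)$ matching $t_i\mapsto S_i$.

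Fix a pretriangulated DG enhancement $\~\TT$ of $\TT$ and let $\AA\subset\~\TT$ be the full DG subcategory on chosen lifts of the $t_i$. By \eqref{eq_vertlikeintro}, the graded cohomology category $H^{\bul}\AA$ is concentrated in degrees $0$ and $1$, with $H^0$ the semisimple category $\prod_{i\in I}\k\cdot\id_{t_i}$ and $H^1$ an $H^0$-bimodule whose $(i,j)$-component has dimension equal to the number of arrows from $i$ to $j$; the composition $H^1\otimes H^1\to H^2$ automatically vanishes. The analogous DG subcategory $\AA'$ spanned by the $S_i$ inside a DG enhancement of $D^b_0(\k Q)$ has isomorphic graded cohomology. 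Once $\AA$ is shown to be quasi-equivalent to $\AA'$ as a DG category, a standard result of Keller identifies their pretriangulated hulls with $\TT$ and $D^b_0(\k Q)$, respectively, via classical generation, yielding the desired equivalence.

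The main obstacle is the formality of $\AA$ (and of $\AA'$): one must show each is quasi-equivalent to its common cohomology $H^{\bul}\AA$, viewed as a DG category with zero differential. Passing to a minimal $A_\infty$-model of $\AA$, this amounts to trivialising all higher operations $m_n$ for $n\ge 3$ by a gauge transformation. The decisive structural input is that $H^{\bul}\AA$, being generated freely by the degree-$1$ bimodule $H^1$ over the semisimple base $H^0$ with no imposed relations (since $H^2=0$), is intrinsically formal: the Hochschild obstruction classes controlling the higher $m_n$ vanish, so any nontrivial $m_n$ can be eliminated inductively by modifying the $A_\infty$-quasi-isomorphism. This is essentially the intrinsic formality of path algebras of quivers over semisimple rings.

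Once formality is in place, the conclusion is routine: quasi-equivalent DG enhancements yield equivalent pretriangulated hulls, producing an equivalence $\TT\simeq D^b_0(\k Q)$ that sends each generator $t_i$ to the corresponding simple $S_i$. The novelty compared with the approach in \cite{EL}, which handled the curve case by hand, lies in isolating the structural ingredient (algebraic enhancement plus simple-like $\Ext$-pattern) that forces quiver-likeness in full generality.
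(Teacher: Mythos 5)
Your overall architecture is essentially the paper's: pass to a minimal $A_\infty$/dg model on the generators, prove it is formal, and compare with the analogous model for the simples in $D^b_0(\k Q)$. The genuine problem is in how you justify the formality step, which is the heart of the argument. You assert that $H^{\bul}\AA$ is ``generated freely by the degree-$1$ bimodule $H^1$ over the semisimple base $H^0$ with no imposed relations (since $H^2=0$)'' and then invoke intrinsic formality of path algebras. This is self-contradictory: the free tensor algebra $T_{H^0}(H^1)$ has $H^1\otimes_{H^0}H^1$ in degree $2$, which is nonzero whenever the $\Ext$-quiver has composable arrows, whereas your algebra has $H^2=0$ by hypothesis. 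The $\Ext$-algebra of the $t_i$ is not a path algebra; it is the quotient in which every product of two degree-one elements vanishes. So the formality result you cite does not apply as stated, and the Hochschild obstruction groups you appeal to are never actually shown to vanish.

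The repair --- and the paper's actual argument --- is more elementary and makes the obstruction theory unnecessary. Take a strictly unital \emph{minimal} $A_\infty$-model $\EE$ of $\TT$ on the objects $t_i$ (the paper obtains this directly from algebraicity via Lef\`evre-Hasegawa, Th.~7.6.0.6, without choosing a dg enhancement first). For $n\ge 3$ and homogeneous $a_1,\dots,a_n$: if some $a_k$ has degree $0$, then by \eqref{eq_vertlikeintro} it is a scalar multiple of an identity, so $m_n(a_1\otimes\cdots\otimes a_n)=0$ by strict unitality; otherwise all $a_k$ have degree $1$ and $\deg m_n(a_1\otimes\cdots\otimes a_n)=(2-n)+n=2$, where all $\Hom$-spaces vanish. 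Hence every $m_n$ with $n\ge 3$ is identically zero on the nose --- no gauge transformation or inductive modification of quasi-isomorphisms is needed, and $\EE$ is a dg category with zero differential. Applying the same construction to the simples in $D^b_0(\k Q)$ yields the same graded category, giving the equivalence. You should also address the passage from generation to $\Perf(\EE)$: the paper first treats the idempotent-complete case and then notes that in general one obtains a fully faithful functor $\TT\to D^b_0(\k Q)$ which is essentially surjective because the $s_i$ generate $D^b_0(\k Q)$ as a triangulated category without taking direct summands.
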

Compared with \cite{EL}, we get rid of finiteness assumptions on the generating set, of assuming $\TT$ has a dg enhancement, and  of assuming that the dg endomorphism algebra is formal.
Instead, we establish equivalence of $\TT$ with the perfect derived category of certain $A_{\infty}$-category with the set of objects $\{t_i\}$ and the graded space of morphisms from $t_i$ to $t_j$ being $\oplus_p\Hom^p_{\TT}(t_i,t_j)$. Due to the grading constraints the $A_{\infty}$-structure must be trivial, yielding the equivalence of $\TT$
and the quiver-like category whose simple modules have the same $\Ext$-spaces as $t_i$-s have.  

Second, for the derived category of a hereditary abelian category $\AA$, there is a bijection between the thick triangulated subcategories in $D^b(\AA)$ and the \emph{thick} (that is, abelian exact extension-closed) subcategories in $\AA$. This is shown in~\cite{Bruning}; see our Proposition~\ref{prop_hersubcat2} for a precise statement. Therefore, statements about thick subcategories in $D^b(\AA)$ can easily be transformed into statements about thick subcategories in $\AA$ and vice versa.  We extend our description in terms of quivers from thick subcategories in $D^b(\AA)$ to the corresponding thick subcategories in $\AA$.
\begin{definition}
    Let $Q$ be a quiver and $\Modd\ k Q$ be the category of right representations of $Q$ over a field $\k$. Denote by $\moddo\k Q\subset \Modd\k Q$ the full subcategory of finite-dimensional nilpotent representations. We will call abelian categories, equivalent to a category of the form $\moddo\k Q$, \emph{quiver-like}.
\end{definition}

As a corollary of Theorem~\ref{th_introA}, we get
\begin{theoremintro}[See {Corollary~\ref{cor_40}}]
\label{th_introB}
Let $\AA$ be an abelian category linear over a field  $\k$, and $\{t_i\}_{i\in I}$ be a family of objects in $\AA$ satisfying conditions~\eqref{eq_vertlikeintro}. Let $\TT=\langle t_i\rangle_{i\in I}\subset D^b(\AA)$ be the thick subcategory generated by $t_i$-s, let $\SS=\TT\cap \AA$ be the corresponding thick subcategory in $\AA$. Then $\TT$ and $\SS$ are quiver-like, and the $t_i$-s correspond to simple modules.
\end{theoremintro}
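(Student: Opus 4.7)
The plan is to apply Theorem~A to obtain an equivalence $F\colon\TT\xra{\sim}D^b_0(\k Q)$ with $F(t_i)=S_i$, and then to show that the abelian heart $\SS\subset\TT$ matches the abelian heart $\moddo\k Q\subset D^b_0(\k Q)$ under $F$. Algebraicity of $\TT$ is automatic: $D^b(\AA)$ carries the standard dg enhancement, and any thick subcategory inherits it. The classical generation hypothesis on $\{t_i\}$ together with the Hom-vanishing~\eqref{eq_vertlikeintro} are precisely the requirements of Theorem~A, so the equivalence $F$ and the quiver $Q$ (with vertex set $I$ and $\dim_\k\Ext^1(t_i,t_j)$ arrows from $i$ to $j$) follow immediately; in particular $\TT$ is quiver-like.

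I would then establish $F(\SS)=\moddo\k Q$ in two steps. For $F^{-1}(\moddo\k Q)\subset\SS$: every $M\in\moddo\k Q$ is finite-dimensional nilpotent, hence admits a finite composition series with simple factors among the $S_i$. Therefore $F^{-1}(M)$ is assembled from the $t_i$'s by finitely many distinguished triangles in $\TT$. Since $\TT\hookrightarrow D^b(\AA)$ is fully faithful and $t_i\in\AA$, every connecting map in such a triangle is an element of some $\Ext^1_\AA$, and the triangle realises the underlying abelian extension; induction on composition length shows $F^{-1}(M)\in\AA$. For the reverse inclusion, take $X\in\SS$ and set $M^\bul=F(X)$. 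If $M^\bul\ne 0$, let $j_{\max}$ and $j_{\min}$ denote its top and bottom cohomological degrees in $\moddo\k Q$. A simple quotient of $H^{j_{\max}}_Q(M^\bul)$ yields a non-zero morphism $M^\bul\to S_i[-j_{\max}]$, and a simple subobject of $H^{j_{\min}}_Q(M^\bul)$ yields a non-zero morphism $S_{i'}[-j_{\min}]\to M^\bul$. Transported via $F^{-1}$ and $\TT\hookrightarrow D^b(\AA)$, these become non-zero elements of $\Ext^{-j_{\max}}_\AA(X,t_i)$ and $\Ext^{j_{\min}}_\AA(t_{i'},X)$. Since $X,t_i,t_{i'}\in\AA$ and negative Ext groups vanish in any abelian category, $j_{\max}\le 0\le j_{\min}$; combined with $j_{\min}\le j_{\max}$ this forces $j_{\max}=j_{\min}=0$, so $M^\bul\in\moddo\k Q$.

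The main subtlety is the second inclusion, which requires detecting the cohomology of $F(X)$ using only $\AA$-intrinsic data. The argument succeeds because Theorem~A preserves Hom-spaces in each cohomological degree, and because $\Ext^{<0}_\AA=0$ in any abelian category---so no global hereditarity assumption on $\AA$ is needed. The resulting equivalence $\SS\simeq\moddo\k Q$ shows that $\SS$ is quiver-like and matches the $t_i$'s with the simple $\k Q$-modules.
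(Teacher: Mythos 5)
Your proposal is correct and follows essentially the same route as the paper: apply Theorem~A to obtain $\TT\simeq D^b_0(\k Q)$ with $t_i\mapsto s_i$, then check that this equivalence identifies $\SS=\TT\cap\AA$ with the heart $\moddo\k Q$, handling the two inclusions separately (iterated extensions of the $t_i$ land in $\AA$; conversely every $X\in\SS$ goes to a complex concentrated in degree $0$). The only cosmetic difference is in the essential-surjectivity step: the paper transports the decomposition of the image of $X$ into shifted cohomology back to $D^b(\AA)$ and reads off from $X\in\AA$ that all cohomology sits in degree $0$, whereas you detect the cohomological amplitude via nonzero maps to and from the $t_i$ in extreme degrees together with the vanishing of negative $\Ext$ between objects of $\AA$ --- both arguments are valid and rest on the same ingredients.
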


As another application of Theorem~\ref{th_introA} we deduce a characterization of abelian quiver-like categories:
\begin{theoremintro}[See Theorem~\ref{th_39ab}]
\label{th_introC}
Let $\AA$ be an abelian category linear over a field  $\k$. Then $\AA$ is quiver-like if and only if $\AA$ is essentially small, all objects have finite length, one has $\End(S)= \k$ for any simple object $S\in \AA$ and $\Ext^p(S_1,S_2)=0$ for any $p\ge 2$ and simple objects $S_1,S_2\in\AA$.
\end{theoremintro}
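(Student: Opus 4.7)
The plan is to reduce to Theorem~\ref{th_introB}. The ``only if'' direction will be a straightforward verification in $\moddo\k Q$; the ``if'' direction will apply Theorem~\ref{th_introB} to the family of \emph{all} simple objects of $\AA$ and then use finite length to deduce that the resulting thick abelian subcategory fills $\AA$.

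For the forward direction, suppose $\AA\simeq\moddo\k Q$. Objects are finite-dimensional by definition, hence of finite length, and the category is essentially small. The simples are indexed by the vertex set of $Q$, with $\End(S_i)=\k$. Since $\k Q$ is hereditary and $\moddo\k Q$ is a Serre subcategory of $\Modd\k Q$ closed under extensions, the Yoneda $\Ext^p$ in $\moddo\k Q$ for $p\ge 2$ can be computed in $\Modd\k Q$ and vanishes.

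For the reverse direction, pick a set $\{S_i\}_{i\in I}$ of representatives of isomorphism classes of simple objects, legitimate since $\AA$ is essentially small. Schur's lemma together with $\End(S_i)=\k$ gives $\Hom(S_i,S_j)=\k\cdot\delta_{ij}$; combined with the hypothesis $\Ext^p(S_i,S_j)=0$ for $p\ge 2$ and the vanishing of $\Hom^p$ for $p<0$ between objects of $\AA$ in $D^b(\AA)$, conditions~\eqref{eq_vertlikeintro} are satisfied. Theorem~\ref{th_introB} then yields that $\TT:=\langle S_i\rangle\subset D^b(\AA)$ and $\SS:=\TT\cap\AA$ are quiver-like.

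It remains to show $\SS=\AA$. By d\'evissage, Ext-vanishing on simples in degree $\ge 2$ propagates to all objects of $\AA$, so $\AA$ is hereditary, placing us in the setting of the Br\"uning correspondence (Proposition~\ref{prop_hersubcat2}); in particular $\SS$ is extension-closed in $\AA$. Now any $X\in\AA$ admits a composition series by finite length, so a short induction on composition length places $X$ in $\SS$. Hence $\AA=\SS$ is quiver-like. The main obstacle in the whole argument is really Theorem~\ref{th_introB} itself; once granted, together with the finite-length hypothesis the reduction above is clean and formal.
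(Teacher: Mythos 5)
Your proposal is correct and follows essentially the same route as the paper: both directions reduce to Theorem~\ref{th_introB} (Corollary~\ref{cor_40}) applied to the set of all simple objects, with the finite-length hypothesis guaranteeing that the simples generate everything. The only difference is cosmetic: the paper observes up front that the simples generate $D^b(\AA)$ as a triangulated category (so $\TT=D^b(\AA)$ and $\SS=\AA$ immediately), whereas you recover $\SS=\AA$ afterwards by induction on composition length; your detour through hereditariness and Proposition~\ref{prop_hersubcat2} is unnecessary for this, since extension-closure of $\SS$ already follows from $\TT$ being triangulated (or from Corollary~\ref{cor_40}(3)).
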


Results similar to Theorems~\ref{th_introA},~\ref{th_introB},~\ref{th_introC} are not new and can be obtained by alternative methods. In particular, Theorem~\ref{th_introC} can be deduced, following ideas of Gabriel~\cite{Gabriel} and without passing to derived categories. One can use a characterisation of finite length abelian categories via pseudocompact modules over pseudocompact algebras from~\cite{Gabriel} and then relate the latter to finite-dimensional comodules over coalgebras by~\cite[Prop. 5.9]{Simson}. Given that the abelian category was hereditary, the coalgebra has to be the path coalgebra of a quiver by~\cite{Chin}, and then the category of finite-dimensional comodules is equivalent to the quiver-like category for the dual quiver by~\cite[Prop. 8.1(d)]{Simson}. For Theorem~\ref{th_introA}, one can construct a hereditary bounded t-structure on $\TT$, whose heart $\HH$ is extension-generated by objects $t_i$-s and is a finite length hereditary abelian category with $t_i$-s being simple objects
(see~\cite{AlNofayee} or~\cite[Prop. 5.4]{KoenigYang}). Then one can show that there is an equivalence $D^b(\HH)\to \TT$ using~\cite[Th. 2.3 and Th. 3.3]{ChenRingel}, see also~\cite[Th. 1.1]{Hubery} for the case where $\TT$ is not assumed to be algebraic. Hence Theorem~\ref{th_introA} would follow from Theorem~\ref{th_introC} applied to $\HH$.  We thank the referee for pointing out the approach to Theorems~\ref{th_introA}--\ref{th_introC} outlined above. We find it useful to provide in the present paper a new and probably  shorter way of proving these results. Our method is different in that it treats triangulated categories first and deduces the result about abelian categories as a corollary. It also avoids the machinery of topological rings and modules of~\cite{Gabriel} and uses $A_\infty$-categories as the main technical instrument.

Third (and most importantly), we apply the methods developed in \cite{EL} and in the first part of this work to the description of thick subcategories in $D^b(\coh \X)$, where $\X$ is a weighted projective curve. Geometrically, a weighted projective curve $\X=(X,w)$ is given by a smooth projective curve $X$ and a weight function $w\colon X\to \N$ such that $w(x)=1$ for all but a finite number of points $x\in X$. The points where $w>1$ are called \emph{orbifold} points. For the definition of the category $\coh\X$ of coherent sheaves see Section~\ref{section_wpccoh} or \cite{LenzingReiten}; see also \cite{GL} for the special case of weighted projective \emph{lines}. 

In the second part of this paper we give an analogue of Theorem~\ref{th_intro0} for a weighted projective curve $\X$. We demonstrate that \textbf{most} of the thick subcategories  in $D^b(\coh \X)$ are quiver-like. Exceptions are in a sense ``close to'' the whole category $D^b(\coh \X)$, they can be explicitly constructed and form a finite set (recall that the only exception in the case of a smooth projective curve is $D^b(\coh X)$ itself). The reason for  such non-quiver-like subcategories  to appear is the presence of exceptional torsion sheaves, which are concentrated in the orbifold points of $\X$. For any point $x\in \X$ of weight $r=w(x)$ the category $\coh_x\X$ of torsion sheaves on $\X$ supported at $x$ is equivalent to the category of finite-dimensional nilpotent representations of a cyclic quiver of length $r$. Such a category is called a 
\emph{tube}  of rank~$r$ and denoted by $\UU_r$. If $r\ge 2$ then $\coh_x\X$ has exceptional objects: for example, simple sheaves supported at orbifold points are exceptional.
\begin{definition}[See {Definition~\ref{def_curvelike}}]
We call a subcategory in $D^b(\coh\X)$ generated by an exceptional collection of torsion sheaves \emph{small} and the orthogonal subcategory to a small subcategory \emph{big}. We use the same terminology for the corresponding subcategories in $\coh\X$.     
\end{definition}

The main result of the second part of the paper is the following
\begin{theoremintro}[See {Theorem~\ref{theorem_main}}]
\label{th_introD}
    Let $\X$ be a weighted projective curve over an algebraically closed field $\k$. Let $\TT\subset D^b(\coh \X)$ be a thick subcategory, and
    $\SS=\TT\cap\coh\X$. Then $\TT$ and $\SS$ are big or $\TT$ and $\SS$ are quiver-like. 
\end{theoremintro}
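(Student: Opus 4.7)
The plan is to reduce to the abelian setting via Proposition~\ref{prop_hersubcat2}: since $\coh\X$ is hereditary, $\TT\mapsto\SS=\TT\cap\coh\X$ bijects thick triangulated subcategories of $D^b(\coh\X)$ with thick abelian subcategories of $\coh\X$, and this bijection preserves being big and being quiver-like. I then split according to whether $\SS$ contains a torsion-free sheaf.

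If $\SS$ contains no torsion-free sheaves, then $\SS\subset\coh_0\X=\bigoplus_x\coh_x\X$ decomposes as $\bigoplus_x\SS_x$, where each $\SS_x$ is a thick abelian subcategory of the tube $\UU_{w(x)}$. Such subcategories are easy to classify: either the whole tube $\UU_{w(x)}$, or generated by a proper union of arcs of simples in the cyclic order, the latter being a disjoint union of categories of nilpotent finite-dimensional representations of linear $A_n$-quivers. Both types are quiver-like, and a direct sum of quiver-like categories is quiver-like (via the disjoint union of quivers), so $\SS$ is quiver-like. Alternatively, Theorem~\ref{th_introC} applies directly: $\SS$ is essentially small and finite length, $\End(S)=\k$ for every simple $S$, and $\Ext^p(S,S')=0$ for $p\ge2$ by hereditarity of $\coh\X$.

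If $\SS$ contains a torsion-free sheaf, I argue by induction on the total orbifold weight $w(\X):=\sum_x(w(x)-1)$. For $w(\X)=0$, $\X=X$ is a smooth curve, and by Theorem~\ref{th_intro0} every proper thick subcategory of $D^b(\coh X)$ is quiver-like (hence torsion-only), so $\SS$ must equal $\coh X$, which is trivially big (the orthogonal to the empty exceptional collection). For the inductive step, if $\SS=\coh\X$ we are done. Otherwise, I claim there is an orbifold simple $S_{x,i}\notin\SS$: if every orbifold simple were in $\SS$, then $\SS\supset\coh_0\X$ (since each tube is generated by its simples under extensions), and combined with a torsion-free sheaf in $\SS$ together with the weighted analogue of the standard argument on smooth curves (using $\Pic\X$ to modify line bundles by divisors supported on points, hereditarity, and repeated extensions/cokernels), one deduces $\SS=\coh\X$. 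Fixing $S_{x,i}\notin\SS$, we have $\SS\subset S_{x,i}^\perp$. The perpendicular category $S_{x,i}^\perp\subset D^b(\coh\X)$ is equivalent to $D^b(\coh\X')$, where $\X'$ is the weighted projective curve obtained from $\X$ by decreasing $w(x)$ by $1$ (a weight-reduction at the orbifold point via an exceptional simple, in the spirit of Geigle--Lenzing and Lenzing--Reiten). Under this equivalence, $\SS$ corresponds to a thick subcategory of $\coh\X'$ that still contains a torsion-free sheaf, and by the inductive hypothesis (since $w(\X')=w(\X)-1$), $\SS$ is the orthogonal of some exceptional collection $E'$ of torsion sheaves on $\X'$. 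Pulling back, $\SS=(\{S_{x,i}\}\cup E')^\perp$ in $\coh\X$ is big.

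The main obstacle is the weight-reduction equivalence $S_{x,i}^\perp\simeq D^b(\coh\X')$ and its compatibility with the torsion/torsion-free dichotomy: one must verify the equivalence (using admissibility of $\langle S_{x,i}\rangle$ and the structure of weighted projective curves), track how torsion-free sheaves correspond under it, and confirm that the inductive invariant strictly decreases. A secondary technical point is establishing the ``all orbifold simples in $\SS$ implies $\SS=\coh\X$'' step, which requires a careful weighted-curve analogue of the classical fact that a single torsion-free sheaf together with all torsion generates $\coh$ of a smooth curve as a thick abelian subcategory.
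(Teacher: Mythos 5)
Your first case (no torsion-free sheaves in $\SS$) is fine and matches the paper's Case~2, which runs Corollary~\ref{cor_39} with $r=\len$. But the second half of your proof rests on a false dichotomy: it is not true that a thick subcategory containing a torsion-free sheaf must be big. The simplest counterexample is $\SS=\langle\O\rangle_{\Ab}\subset\coh\P^1$ (or any subcategory generated by a single exceptional line bundle on a weighted projective line, cf.\ Example~\ref{example_tf}): it contains a vector bundle, yet it is quiver-like and certainly not the orthogonal to an exceptional collection of torsion sheaves, since by Proposition~\ref{prop_bigbig} every big subcategory contains an entire curve-like copy of some $\coh(X,w')$ and in particular contains sphere-like torsion sheaves. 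Your base case already breaks: from Theorem~\ref{th_intro0} you infer that every proper thick subcategory of $D^b(\coh X)$ is ``quiver-like (hence torsion-only),'' but quiver-like does not imply torsion-only --- a degree-zero line bundle on an elliptic curve generates a quiver-like subcategory containing a torsion-free sheaf. Consequently the inductive step also fails: when $\SS$ contains a vector bundle but omits some orbifold simple $S_{x,i}$, passing to $S_{x,i}^\perp\simeq D^b(\coh\X')$ and invoking the induction hypothesis only tells you that $\SS$ is big \emph{or quiver-like} inside $\coh\X'$, not that it is big, so the conclusion ``$\SS=(\{S_{x,i}\}\cup E')^\perp$'' does not follow.

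The case you are missing --- thick subcategories containing vector bundles but no sphere-like torsion sheaves --- is exactly the heart of the paper's argument. There one first splits off the maximal exceptional collection of torsion sheaves in $\TT$, observes (via Proposition~\ref{prop_thicktubes}) that the remaining piece $\TT'$ contains no torsion sheaves at all, and then applies Corollary~\ref{cor_39} with the linear function $r=\rank$ on $K_0$: positivity of rank on nonzero objects of $\TT'\cap\coh\X$ forces that abelian category to have finite length, its simples form a vertex-like family, and Theorem~\ref{th_39} yields quiver-likeness. The correct dividing line for bigness is the simultaneous presence of a vector bundle \emph{and} a sphere-like torsion sheaf (Lemma~\ref{lemma_4to1} and Proposition~\ref{prop_bigbig}(4)), not the presence of a torsion-free sheaf alone. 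Your weight-reduction induction via $S_{x,i}^\perp$ is a sound technique (it is how the paper proves Lemma~\ref{lemma_4to1}), but it only establishes bigness once you already know $\SS$ contains both a vector bundle and a sphere-like torsion sheaf; without that hypothesis the rank-function argument is indispensable and cannot be bypassed.
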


Note that in contrast with Theorem~\ref{th_intro0} we do not assume that $\TT$ is finitely generated, and give a description for the abelian subcategory $\SS$ as well. Note also that two alternatives in Theorem~\ref{th_introD} are not exclusive: big triangulated  subcategories can be quiver-like. 

Let us explain briefly the idea of the proof of Theorem~\ref{th_introD}. We consider two cases: whether a thick subcategory $\TT\subset D^b(\coh\X)$ contains simultaneously a vector bundle and a sphere-like torsion sheaf (see Definition~\ref{def_exc}) or not. In the first case, one can show that  $\TT$ is big. In the second case, $\TT$ has a semi-orthogonal decomposition $\TT=\langle\TT_1,\TT_2\rangle$, where $\TT_1\cap\coh\X$ contains only torsion sheaves and $\TT_2\cap\coh\X$ only torsion-free sheaves. We show that the corresponding abelian categories $\SS_i=\TT_i\cap \coh\X$ are finite length categories: the length of a sheaf $F\in\SS_i$ as an object in $\SS_i$ is bounded by $\dim(H^0(F))$ for $\SS_1$ and by $\rank(F)$ for $\SS_2$. The families of simple objects in $\SS_1$ and~$\SS_2$, taken together, form a generating set for~$\TT$ satisfying assumptions of Theorem~\ref{th_introA}. By Theorem~\ref{th_introA} we deduce that $\TT$ is quiver-like.

To clarify the notion of a big subcategory, we introduce the following definition.
\begin{definition}
    Let $\X=(X,w)$ be a weighted projective curve. We call a thick subcategory $\SS\subset \coh\X$  \emph{curve-like} if there exists a weighted projective curve $\X'=(X,w')$ and a fully faithful functor $\Phi\colon \coh\X'\to\coh\X$ preserving rank and support of coherent sheaves with  $\im\Phi=\SS$ (note that the underlying curves of $\X$ and $\X'$ are the same).
\end{definition}
It is known that, given a simple exceptional torsion sheaf $S$ on a weighed projective curve $\X$,
its orthogonal subcategory $\{F\in\coh\X\mid \Ext^i(S,F)=0\quad\text{for all $i$}\}$ is curve-like. As next theorem shows, big subcategories are close to curve-like. 
\begin{theoremintro}[See Proposition~{\ref{prop_bigbig}}]
\label{th_introE}
    For a weighted projective curve over  an algebraically closed field $\k$, the following conditions on a thick subcategory $\SS\subset \coh\X$ are equivalent:
    \begin{enumerate}
        \item $\SS=\langle E_1,\ldots, E_n\rangle^\perp\cap \coh\X$
        for some exceptional collection $E_1,\ldots,E_n$ of torsion sheaves (that is, $\SS$ is big);
        \item $\SS=\SS_1\times \SS_2$, where $\SS_1\subset\coh\X$ is curve-like and $\SS_2\subset\coh\X$ is small;
        \item $\SS$ contains a curve-like subcategory of $\coh\X$;
        \item $\SS$ contains a non-zero vector bundle and a sphere-like torsion sheaf;
        \item $\SS$ contains a non-zero vector bundle and $\SS$ is invariant under functors $c_x$ of ``twisting by line bundle $\O_{\X}(x)$'' for any point $x\in X$  (see Definition~\ref{def_cx}).
    \end{enumerate}
\end{theoremintro}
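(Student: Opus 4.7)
The plan is to verify the cyclic chain $(1)\Rightarrow(2)\Rightarrow(3)\Rightarrow(4)\Rightarrow(5)\Rightarrow(1)$. The implication $(2)\Rightarrow(3)$ is immediate since the curve-like factor $\SS_1$ sits inside $\SS$. For $(3)\Rightarrow(4)$, let $\Phi\colon\coh\X'\to\coh\X$ be the rank- and support-preserving fully faithful embedding witnessing curve-likeness. Since $\X'$ has only finitely many orbifold points, some $x'\in X$ is non-orbifold for $\X'$; then $\Phi(\O_{\X'})\in\SS$ is a non-zero vector bundle, while $\Phi(\O_{x'})$ is a torsion sheaf whose $\Ext$-algebra agrees with that of $\O_{x'}$ on $\X'$, namely $\k\oplus\k[-1]$, hence sphere-like.

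For $(1)\Rightarrow(2)$ I would invoke perpendicular calculus in tubes. A simple torsion sheaf at a non-orbifold point is sphere-like, hence never exceptional, so every exceptional torsion sheaf lies in a tube $\coh_x\X\cong\UU_{r_x}$ at an orbifold point with $r_x\ge 2$. Grouping $E_1,\ldots,E_n$ by support, each local subcollection is an exceptional collection in some $\UU_{r_x}$, whose orthogonal in that tube decomposes as a finite product of strictly smaller tubes $\UU_{r'_1}\times\cdots\times\UU_{r'_s}$. Reassembling one such tube per orbifold point (together with the matching torsion-free information) produces a modified weighted curve $\X'=(X,w')$ with $w'\le w$; the remaining auxiliary tubes contribute a finite product of small subcategories $\SS_2$. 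This gives the splitting $\SS\cong\coh\X'\times\SS_2$ with the first factor curve-like and the second small.

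For $(5)\Rightarrow(1)$, suppose $\SS$ is $c_x$-invariant for every $x\in X$ and contains a non-zero vector bundle $V$. At each orbifold point $x$, the intersection $\SS\cap\coh_x\X$ is a $c_x$-invariant thick subcategory of the tube $\UU_{r_x}$; since $c_x$ acts on this tube by a cyclic rotation, such invariant thick subcategories are precisely the orthogonals to exceptional subcollections of the tube. Collecting these yields a global exceptional torsion collection $E_1,\ldots,E_n$. Using twist-invariance and surjections $V\twoheadrightarrow\O_z$ at non-orbifold $z$ (together with closure of $\SS$ under quotients and extensions), one sees that $\SS$ already contains every vector bundle on $\X$; combining with the local tube analysis gives $\SS=\langle E_1,\ldots,E_n\rangle^\perp\cap\coh\X$.

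The main obstacle is $(4)\Rightarrow(5)$. Given $V,T\in\SS$ with $V$ a non-zero vector bundle and $T$ sphere-like supported at some $y\in X$, thickness together with surjections $V\twoheadrightarrow\O_z$ coming from the non-zero fiber of $V$ places every $\O_z$ for non-orbifold $z$ into $\SS$, and the exact sequence $0\to V\to V(z)\to V|_z\to 0$ then yields $c_z$-invariance at each non-orbifold $z$. The delicate case is an orbifold point $x$: one must propagate local information coming from $T$ (whose subquotients exhaust the simples of the tube at $y$ when $y$ is orbifold) together with the global presence of $V$ to produce every simple torsion sheaf in $\coh_x\X$. This requires a careful interplay between closure under quotients, the representation-theoretic behaviour of the stacky fibers $V|_x$ as $\mu_{r_x}$-representations, and the specific structure of sphere-like objects inside a tube; making this step fully rigorous is the most technically involved part of the proposed argument.
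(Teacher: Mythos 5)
There is a genuine gap, and it sits exactly where the real content of the theorem lies. Your chain $(1)\Rightarrow(2)\Rightarrow(3)\Rightarrow(4)$ is essentially fine (modulo the inaccuracy that the perpendicular of an exceptional collection inside a tube $\UU_n$ is of the form $\UU_s\times\AA_{n_1}\times\dots\times\AA_{n_k}$ --- a single smaller tube times linear-quiver categories, not a product of several tubes). But the hard implication --- that a thick subcategory containing a non-zero vector bundle and a sphere-like torsion sheaf must be big --- is not proved. You route it as $(4)\Rightarrow(5)\Rightarrow(1)$ and explicitly concede that $(4)\Rightarrow(5)$ is ``the most technically involved part'' without supplying the argument; this is precisely the step the paper isolates as Lemma~\ref{lemma_4to1} and proves by a five-step induction on the total weight of orbifold points (reduce to the case where $\SS^\perp$ contains no simple exceptional sheaf and $\SS$ contains no exceptional torsion sheaf, show $\SS\cap\coh_x\X$ is non-zero at \emph{every} point via an Euler-form growth estimate on a chain $V=V_0\supset V_1\supset\dots$ with quotients the sphere-like sheaf, and then use Serre duality to force $w\equiv 1$). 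Renaming the hard implication does not discharge it.

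Moreover, your $(5)\Rightarrow(1)$ rests on a false premise: you assert that $c_x$ ``acts on the tube $\coh_x\X$ by a cyclic rotation,'' so that $c_x$-invariance constrains $\SS\cap\coh_x\X$. In fact $c_x=\s_x^{w(x)}$ and $\s_x=\tau^{-1}$ on $\HH_x$, where $\tau$ has order $w(x)$, so $c_x$ acts as the \emph{identity} on all torsion sheaves (Lemma~\ref{lemma_cx}); every thick subcategory of the tube is $c_x$-invariant, and your proposed classification of the invariant ones yields nothing. The only leverage $(5)$ provides is through vector bundles: $c_x^{-1}(V)\in\SS$ and $V/c_x^{-1}(V)\cong\Top_x^{[w(x)]}(V)$ is a direct sum of sphere-like sheaves supported at $x$ (Lemmas~\ref{lemma_topm} and~\ref{lemma_exceptionalatorbifold}), which is how the paper proves $(5)\Rightarrow(4)$ and then feeds everything back into Lemma~\ref{lemma_4to1}. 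I would recommend reorganizing your implications so that $(5)$ is used only to produce a sphere-like sheaf, and then confronting the implication $(4)\Rightarrow(1)$ head-on.
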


Note that exceptional collections in a   tube $\UU_r$ have been explicitly classified in  \cite{Di} and \cite{Krause_strings}. Therefore small (and hence big) subcategories in $D^b(\coh\X)$ can also be explicitly classified.

While preparing this manuscript, we discovered a recent paper~\cite{Cheng} by Yiyu Cheng studying thick subcategories on weighted projective \emph{lines} (i.e., \emph{rational} curves), whose results and methods overlap with ours, in particular, with Theorem~\ref{th_introE}. Its proof, given in \cite{Cheng}, is similar to ours. We decided to include our proof of Theorem~\ref{th_introE} for the sake of completeness and because we work in greater generality, allowing curves of higher genus. The main difference between our setting and the case of rational curves is in dealing with condition (5) since for a rational curve all twist functors $c_x, x\in X$ are isomorphic and in general they are not. 

We use Theorem~\ref{th_introD} to obtain information about the structure of thick subcategories on a weighted projective curve $\X$. Let us mention  some of these results. In Proposition~\ref{prop_admexc} we show that any admissible subcategory in $D^b(\coh \X)$ that is not big is generated by an exceptional collection. In particular (Corollary~\ref{cor_admexcP1}), if $\X$ is a weighted projective \textbf{line} then any admissible subcategory in $D^b(\coh \X)$ is generated by an exceptional collection, and any exceptional collection is a part of a full exceptional collection. On the contrary (Corollary~\ref{cor_nosod}), if $\X=(X,w)$ where $X\not\cong\P^1$, then any admissible subcategory in $D^b(\coh \X)$ is either big or small. One can use this to get an alternative proof of a well-known theorem  by Okawa about semi-orthogonal indecomposability of the derived category of a smooth projective curve, see Remark~\ref{rem_okawa}.

Recall that a triangulated category is said to satisfy the \emph{Jordan--H\"older property} if all maximal semi-orthogonal decompositions have the same collection of components up to equivalence. Also recall that a triangulated category $\TT\ne 0$ is called a \emph{phantom} if  $K_0(\TT)=0$.
Violation of the Jordan--H\"older property and existence of phantoms can be viewed as pathologies, however, they happen. Failure of the Jordan--H\"older property  for the  derived categories of some algebraic varieties was shown in~\cite{BohningBothmerSosna},~\cite{Kuznetsov_JordanHolder}, and the first phantoms were constructed in~\cite{GorchinskiyOrlov} and~\cite{Bohning+}. Some positive results in this direction are also available: in~\cite{Pirozhkov_delPezzo} the Jordan--H\"older property was proved  for the derived category of $\P^2$, and absence of phantoms was demonstrated for del Pezzo surfaces. These are essentially the only geometric examples of dimension $>1$ known so far where semi-orthogonal decompositions exist and there provably are no phantoms. 
In this direction, we contribute the following:
\begin{theoremintro}[Corollaries~\ref{cor_JH} and~\ref{cor_phantom}]
The derived categories of weighted projective curves have the Jordan--H\"older property and do not contain phantom subcategories.    
\end{theoremintro}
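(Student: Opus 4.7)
My plan is to deduce both statements directly from Theorem~\ref{th_introD} combined with the classification of admissible subcategories (Proposition~\ref{prop_admexc}, Corollaries~\ref{cor_admexcP1} and~\ref{cor_nosod}) and Theorem~\ref{th_introE}.

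For the absence of phantoms, let $\TT\subseteq D^b(\coh\X)$ be a nonzero thick subcategory. By Theorem~\ref{th_introD} it is either quiver-like or big. If $\TT\simeq D^b_0(\k Q)$ is quiver-like with $Q$ nonempty, then $K_0(\TT)$ is the free abelian group on the classes of the simple modules (the dimension-vector map at each vertex factors through $K_0$ and exhibits the simples as a basis), hence nonzero. If $\TT$ is big, it contains a nonzero vector bundle $E$, and the composition $K_0(\TT)\to K_0(D^b(\coh\X))\xrightarrow{\rank}\Z$ sends $[E]$ to $\rank E\ne 0$. In either case $K_0(\TT)\ne 0$, so $\TT$ is not a phantom.

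For the Jordan--H\"older property I would fix a maximal semi-orthogonal decomposition $D^b(\coh\X)=\langle\AA_1,\ldots,\AA_n\rangle$ and classify its indecomposable components. When $X\cong\P^1$, Corollary~\ref{cor_admexcP1} forces every admissible subcategory to be generated by an exceptional collection, so each indecomposable $\AA_i$ is generated by a single exceptional object; all components are then equivalent and their number $n$ is pinned down by $\rank K_0(D^b(\coh\X))$. When $X\not\cong\P^1$, Corollary~\ref{cor_nosod} says each $\AA_i$ is big or small; a small component refines into pieces generated by single exceptional objects, while by Theorem~\ref{th_introE}(2) an indecomposable big component $\AA_i$ must correspond to a purely curve-like abelian subcategory, i.e.\ $\AA_i\simeq D^b(\coh\X')$ for some weighted projective curve $\X'=(X,w')$. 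I would then show that at most one big component can appear: by Theorem~\ref{th_introE}(5) a big subcategory is closed under every twist functor $c_x$, so if $\AA_i,\AA_j$ with $i<j$ were both big and contained vector bundles $E\in\AA_i$, $F\in\AA_j$, then $E(D)\in\AA_i$ for every divisor $D$ and, choosing $\deg D$ large, Riemann--Roch forces $\Hom(F,E(D))\ne 0$, contradicting semi-orthogonality. Finally, the equivalence class of the unique big component is forced: the small components correspond to an exceptional collection $\{E_1,\ldots,E_m\}$ of torsion sheaves (classified by~\cite{Di,Krause_strings} in the tubes $\UU_r$), and the big component is $\langle E_1,\ldots,E_m\rangle^\perp$, whose equivalence class is read off from the profile of the $E_i$ at the orbifold points.

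The hardest step is this last reconstruction: one must verify that the multiset of equivalence classes of the small $\AA_i$'s determines the weight function $w'$ up to equivalence of $D^b(\coh\X')$. This amounts to a bookkeeping argument in the tubes, based on the product decomposition $\SS=\SS_1\times\SS_2$ from Theorem~\ref{th_introE}(2) together with the uniqueness of its curve-like factor, and is the only place where an explicit combinatorial analysis enters.
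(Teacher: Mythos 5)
Your treatment of phantoms is correct and is essentially the paper's proof of Corollary~\ref{cor_phantom}: split a nonzero thick subcategory into the quiver-like and big cases via Theorem~\ref{theorem_main}, use $K_0(D^b_0(\k Q))\cong \Z^{(Q_0)}$ in the first case and the rank homomorphism in the second.

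The Jordan--H\"older part has the right skeleton (classify indecomposable components of a maximal decomposition via Proposition~\ref{prop_admexc} and Corollaries~\ref{cor_admexcP1}, \ref{cor_nosod}), but the final step is a genuine gap, and the route you propose would not close it. You leave unproven the claim that the equivalence class of the unique big component is determined, and the data you propose to extract it from cannot do the job: in a maximal decomposition every small component is generated by a single exceptional object, so the multiset of equivalence classes of the small components consists of $m$ copies of one and the same category (the derived category of a point) and remembers nothing about which orbifold points, or which positions inside the tubes, the generating torsion sheaves $E_i$ occupy. The ``profile of the $E_i$'' is data about how the components sit inside $D^b(\coh\X)$, not an invariant of the list of components up to equivalence, and it genuinely varies between different maximal decompositions; so your plan presupposes exactly the uniqueness you are trying to establish. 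The paper closes the argument with one observation you are missing: a component of a maximal decomposition is semi-orthogonally indecomposable, and if it is big then by Proposition~\ref{prop_bigbig} it is equivalent to $D^b(\coh(X,w'))$ with trivial small factor; if $w'$ were nontrivial, a simple exceptional torsion sheaf at an orbifold point of $(X,w')$ would produce a further semi-orthogonal decomposition. Hence $w'\equiv 1$ and the big component is always $D^b(\coh X)$, independently of the chosen decomposition --- no bookkeeping in the tubes is required. (Your twist/Riemann--Roch argument that at most one big component occurs is valid, though it is quicker to note that every big subcategory contains all torsion sheaves supported at regular points, and two semi-orthogonal subcategories share no nonzero object.)
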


We call a thick subcategory $\TT\subset D^b(\coh\X)$ \emph{torsion} (resp. \emph{torsion-free}) if all sheaves in $\TT$ are torsion (resp. torsion-free). A torsion subcategory $\TT$ decomposes into the orthogonal direct sum 
$$\TT=\oplus_{x\in X} \TT_x,$$
where $\TT_x$ is a subcategory supported at point $x$. For any point $x$ thick subcategories in $D^b(\coh\X)$ supported at $x$ are in bijection with thick subcategories in $D^b(\UU_r)$, where $\UU_r$ is a   tube of rank $r=w(x)$. The latter have been classified in~\cite{Di} and~\cite{Krause_strings}. On the other hand, we demonstrate (Proposition~\ref{prop_ttf}) that any thick subcategory $\TT\subset D^b(\coh\X)$ that is not big has unique semi-orthogonal decomposition $\TT=\langle\TT_1,\TT_2\rangle$, where $\TT_1$ is torsion and $\TT_2$ is torsion-free. This indicates that description of torsion-free subcategories is the major part of the classification of thick subcategories on weighted projective curves.

The variety of different types of thick subcategories in $D^b(\coh \X)$ for a weighted projective line $\X$ of wild type is presented (in the form of an Euler diagram) on Figure~\ref{fig_1}.
\begin{figure}
    \label{fig_1}
   	\centering 
		\begin{tikzpicture}[>=stealth,scale=1]
            \draw[line width=1.5] (0,0) rectangle (12,6);
            \draw[line width=1.5] (4,0.1) rectangle (8,3);
            \draw[line width=1.5] (2,2) rectangle (10,8);
            \draw[line width=1.5] (2.1,4) rectangle (9.9,7.9);
            \draw[line width=1.5] (6,2.1) rectangle (9.8,7.8);
            \draw[line width=1.5] (6.1,4.1) rectangle (9.7,7.7);
            \draw[line width=1.5] (0.1,2.1) rectangle (3.9,3.9);
            \draw[line width=1.5] (2.1,2.2) rectangle (3.8,3.8);
            \draw  (1.1,0.3) node[font=\small] {Quiver-like};
            \draw  (2.5,7.6) node[font=\small] {Big};
            \draw  (7.1,7.4) node[font=\small] {Curve-like};
            \draw  (0.9,2.4) node[font=\small] {Torsion};
            \draw  (2.7,2.5) node[font=\small] {Small};
            \draw  (5.1,0.4) node[font=\small] {Torsion-free};
            \draw  (4.95,3.5) node[font=\small] {Admissible};
            \draw  (9.2,2.4) node[font=\small] {WPL};
        \end{tikzpicture}
    \caption{Variety of thick triangulated subcategories on a weighted projective line of wild type. Here WPL means ``equivalent to the derived category of a weighted projective line''.}
\end{figure}

\subsection{Open question, further directions}

We did not investigate here which quivers are realizable on curves, i.e., which quiver-like categories $\moddo \k Q$ appear as thick subcategories in categories of coherent sheaves on weighted projective curves. This question is apparently more difficult than for ordinary smooth curves and will probably be studied elsewhere. Recall that all module categories over wild algebras are ``equally complex'' in the sense that for any two such algebras $A,B$ there is a functor $\modd A\to \modd B$ which is injective  on isomorphism classes of indecomposables. In general, however, this functor cannot be fully faithful with thick image, and $\modd A$ is not equivalent to  a thick subcategory of $\modd B$. Likewise, one should  not expect that any wild category $\moddo \k Q$ can be  embedded into any wild category $\coh\X$.

It seems that quiver-like categories are interesting on their own as natural generalisations of hereditary module categories, and deserve some attention.

For example, we do not know what the 
Auslander -- Reiten quivers of $\moddo \k Q$ and $D^b_0(\k Q)$ look like in general.  For module categories of finite acyclic quivers it is classically known that $D^b(\modd \k Q)\cong D^b(\modd\k Q')$ if and only if $Q$  is obtained from $Q'$ by a finite number of reflections. Also, in \cite{MiyachiYekutieli} the group of autoequivalences of $D^b(\modd \k Q)$ is described: it is the semi-direct product of the automorphism group of the Auslander -- Reiten quiver   of $D^b(\modd \k Q)$ and the product $\prod_{i,j\in Q_0}GL_{d_{ij}}(\k)$, where $d_{ij}$ is the number of arrows from $i$ to $j$ in $Q$.      We are not aware of analogues of these results for triangulated quiver-like categories if quivers can have cycles or be infinite.

\subsection{Outline}
The paper is organised as follows. In Section~\ref{section_background} we recall necessary background on abelian, triangulated, dg and $A_\infty$-categories. In Section~\ref{section_hereditary} we gather  less standard  information about abelian hereditary categories, their derived categories, and thick subcategories in these. In Section~\ref{section_QL} we introduce quiver-like categories and generalise results from~\cite{EL} providing sufficient conditions for categories to be quiver-like. Theorems~\ref{th_introA}, \ref{th_introB}, and \ref{th_introC} are proved here. We also describe proper, finitely generated, strongly finitely generated quiver-like categories, and those having a Serre functor, see Proposition~\ref{prop_QLgeneral}. In Section~\ref{section_linestubes} we recall standard results about tubes and  representations of linear $A_n$-quivers, in particular, we describe their thick subcategories as direct sums of some categories of the same form. In Section~\ref{section_wpc} we give definitions and basic facts about weighted projective curves and their categories of coherent sheaves. In Section~\ref{section_bigsmall} we introduce big, small, and curve-like subcategories in coherent sheaves on weighted projective curves, and give different characterisations of big subcategories from Theorem~\ref{th_introE}. Results of this section are similar to those from~\cite{Cheng} but are obtained in greater generality. In Section~\ref{section_main} we prove our main result, Theorem~\ref{th_introD}. Further we discuss the structure and the variety of thick subcategories on weighted projective lines.
Finally, in Section~\ref{section_examples} we provide some examples of thick quiver-like subcategories on weighted projective lines and compute corresponding quivers.

\subsection{Acknowledgements}

This paper grows out from the study of thick subcategories on smooth curves carried out in collaboration with Valery Lunts, to whom I am much indebted. 
Major part of this work was done in the inspiring environments of the IHES and the University of Edinburgh, and I am grateful to Emmanuel Ullmo, Maxim Kontsevich, Mikhail Tsfasman, Vanya Cheltsov, and 
Sasha Shapiro for their hospitality and help during that period. 
I thank Rudradip Biswas, Nathan Broomhead, Martin Gallauer, Edmund Heng, Yuki Hirano, Daigo Ito, and Dmitri Kaledin for their interest in this study and for useful discussions. 
Special thanks go to Timothy Logvinenko for his help with $A_\infty$-categories. I am thankful to an anonymous referee for careful reading of the manuscript and for their comments. 
Finally, I thank the Isaac Newton Institute, the London Mathematical Society, and the UKRI Horizon Europe guarantee award 
`Motivic invariants and birational geometry of simple normal crossing degenerations' EP/Z000955/1 for their financial support.

\section{Background, conventions, notation}
\label{section_background}
We work over a fixed field $\k$. Starting from Section~\ref{section_wpc} we assume that $\k$ 
is algebraically closed. For a $\k$-vector space $V$, we denote its dual by $V^*$. An additive category $A$ is \emph{$\k$-linear} if all its $\Hom$ groups are $\k$-vector spaces and composition maps are $\k$-bilinear. A $\k$-linear additive category is \emph{$\Hom$-finite} if all $\Hom$ spaces are finite-dimensional over $\k$.

\subsection{Background on abelian categories}
An object in an abelian category is \emph{simple} if it has no non-trivial subobjects. An object $F$ is said to have \emph{finite length} if it has a finite filtration with simple quotients. The number of such quotient does not depend on filtration and is called the \emph{length} of $F$. 
An abelian category is a \emph{finite length category} if any its object has finite length. 

An abelian category $\AA$ is \emph{connected} if it has no non-trivial decompositions  into a direct product $\AA=\AA_1\times\AA_2$.

An object $A$ of an abelian  category $\AA$ is \emph{uniserial} if it has unique filtration $0=A_0\subset A_1\subset\ldots\subset A_m=A$ with simple quotients. All subobjects   of a  uniserial  object are terms of this filtration. An abelian category is \emph{uniserial} if all its indecomposable objects are uniserial. Any object in a uniserial category is uniquely determined by its length $m$ and its top simple quotient $A_m/A_{m-1}$.

The \emph{radical} $\mathrm{rad}(A)$ of an object $A$ in a finite length abelian category is the intersection of all maximal subobjects. Equivalently, radical of $A$ is the kernel of the universal semi-simple quotient of $A$. The latter is called the \emph{top} of $A$ and denoted $\mathrm{top}(A)$.

For an abelian category $\AA$, we denote by $\Gamma_0(\AA)$ a set of representatives of isomorphism classes of simple objects in $\AA$. 

\subsection{Background on triangulated and derived categories}

We refer to \cite{BK}, \cite{BvdB}, \cite{GelfandManin}, \cite{Huyb}, \cite{Neeman}   for the definitions and basic concepts on  triangulated and derived categories. We denote shift functor by $[\:]$ and call distinguished triangles \emph{exact}.

For objects $X,Y$ in a triangulated category $\DD$ we denote $\Hom^i(X,Y):=\Hom(X,Y[i])$ and 
$$\Hom^\bul(X,Y):=\oplus_{i\in\Z}\Hom^i(X,Y),$$
this is a graded abelian group. A triangulated $\k$-linear category $\DD$ is \emph{proper} if  for any $X,Y\in \DD$ the $\k$-vector space $\Hom^\bul(X,Y)$ is finite-dimensional. 

Let $\GG$ be a subcategory (or a collection of objects, or a single object) in a triangulated category $\DD$. We denote by $[\GG]$ the smallest strict  full triangulated subcategory in $\DD$ that contains $\GG$ and call it \emph{the triangulated subcategory generated by} $\GG$. We denote by $\langle\GG\rangle$ the smallest strict idempotent closed full triangulated subcategory in $\DD$ that contains $\GG$ and call it \emph{the thick subcategory generated by} $\GG$. 

Subcategories $[\GG],\langle\GG\rangle\subset \DD$ can be described  constructively as follows. Denote by $[\GG]_0$ the full subcategory in $\DD$ whose objects are  finite direct sums of shifts of objects in $\GG$. For $n\ge 1$, denote by $[\GG]_n$ the full subcategory in $\DD$ whose objects $X$ fit into an exact triangle 
$X_0\to X\to X_{n-1}\to X_0[1]$, where $X_i\in [\GG]_i$.  Let $\langle\GG\rangle_n$ be the idempotent closure of $[\GG]_n$. Then $[\GG]=\cup_n [\GG]_n$  and $\langle\GG\rangle=\cup_n \langle\GG\rangle_n$.
We say that $\GG$ \emph{generates $\DD$ as a triangulated category} if $[\GG]=\DD$.

An object $G$ in $\DD$ is called a \emph{generator} of $\DD$ if $\langle G\rangle=\DD$, and is called a \emph{strong generator} of $\DD$ if $\langle G\rangle_n=\DD$ for some $n$.

A \emph{Serre functor} on a triangulated $\Hom$-finite $\k$-linear category $\DD$ is an autoequivalence $S\colon \DD\to \DD$ 
such that there exist  natural isomorphisms 
$$\Hom(X,Y)\cong \Hom(Y,S(X))^*$$
for all $X,Y\in \DD$.  Such functor, if exists, is exact and unique up to an isomorphism.

For a subcategory $\TT\subset \DD$ (or a family of objects, or for a single object) define its \emph{left} and \emph{right orthogonals} as full subcategories in $\DD$, given  respectively by 
\begin{align*}
    ^\perp \TT &:=\{X\in \DD\mid \Hom^\bul(X,T)=0\quad \text{for any $T\in\TT$}\},\\
    \TT^\perp &:=\{X\in \DD\mid \Hom^\bul(T,X)=0\quad \text{for any $T\in\TT$}\}.\\
\end{align*}
One says that a triangulated category $\DD$ has a \emph{semi-orthogonal decomposition} 
$$\DD=\langle\TT_1,\ldots,\TT_n\rangle$$
if  $\TT_1,\ldots,\TT_n$ are  full triangulated subcategories and  
\begin{enumerate}
    \item $\TT_i\subset \TT_j^\perp$ for all $1\le i<j\le n$, and
    \item $\DD$ is the smallest strict full triangulated subcategory of $\DD$ that contains all $\TT_1,\ldots,\TT_n$.
\end{enumerate}
In this case the following holds:
\begin{itemize}
    \item For any $X\in \DD$ there is unique diagram
    $$0=X_{n+1}\to X_n\to\ldots \to X_{2}\to X_1=X,$$
    such that $Y_i:=Cone(X_{i+1}\to X_i)\in\TT_i$ for all $i=1\ldots n$.
    \item There are well-defined exact functors $\pi_i\colon \DD\to\TT_i, \pi_i(X)=Y_i$, called \emph{projection functors}.
    \item $\pi_1\colon \DD\to\TT_1$  is left adjoint to the inclusion $\TT_1\to\DD$, and 
    $\pi_n\colon \DD\to\TT_n$  is right adjoint to the inclusion $\TT_n\to\DD$.
    \item One has 
    $$\TT_i=^\perp\langle\TT_1,\ldots,\TT_{i-1}\rangle\cap \langle\TT_{i+1},\ldots,\TT_n\rangle^\perp.$$
    \item Associativity: for any $1\le p\le q\le n$ there is a semi-orthogonal decomposition
    $$\DD=\langle\TT_1,\ldots,\TT_{p-1},\langle\TT_p,\ldots,\TT_q\rangle,\TT_{q+1},\ldots,\TT_n\rangle.$$
\end{itemize}
A subcategory $\TT\subset \DD$ is called \emph{left} (resp. \emph{right}) \emph{admissible} if the inclusion functor $\TT\to \DD$ has a left (resp. right) adjoint functor. A triangulated subcategory $\TT$ of a triangulated category $\DD$ is left admissible if and only if it appears as a left component in some semi-orthogonal decomposition $\DD=\langle\TT,\TT'\rangle$,
and if and only if there is semi-orthogonal decomposition $\DD=\langle\TT,^\perp \TT\rangle$. Similar statement holds for right admissible subcategories.

A subcategory $\TT\subset \DD$ is called \emph{admissible} if it is both left and right admissible.

\begin{definition}
\label{def_exc}
Let $\DD$ be a triangulated $\k$-linear category. An object $E\in\DD$  is \emph{exceptional} if 
$$\Hom(E,E)=\k, \quad \Hom^i(E,E)=0\quad\text{for $i\ne 0$}.$$
An object $E\in\DD$ is \emph{sphere-like} (more precisely, $1$-sphere-like, but we will not encounter any other sphere-like objects) if 
$$\Hom(E,E)=\k, \quad \Hom^1(E,E)\cong \k, \quad \Hom^i(E,E)=0\quad\text{for $i\ne 0,1$}.$$
A collection of objects $E_1,\ldots,E_n\in\DD$ is \emph{exceptional} if 
\begin{itemize}
    \item any $E_i$ is exceptional, and
    \item $\Hom^\bul(E_q,E_p)=0\quad \text{for all $1\le p<q\le n$}$.
\end{itemize}
\end{definition}

If $\DD$ is proper then the triangulated subcategory in $\DD$ generated by an exceptional collection $E_1,\ldots,E_n$ is admissible, we denote this subcategory 
$$\langle E_1,\ldots,E_n\rangle.$$ 
In particular, one has semi-orthogonal decompositions
$$\langle E_1,\ldots,E_n\rangle=\langle\langle E_1\rangle,\ldots,\langle E_n\rangle\rangle$$
and
$$\DD=\langle\langle E_1,\ldots,E_n\rangle^\perp,\langle E_1,\ldots,E_n\rangle\rangle=\langle \langle E_1,\ldots,E_n\rangle, ^\perp\langle E_1,\ldots,E_n\rangle\rangle.$$

\medskip
A triangulated category is called \emph{algebraic} if it is the stable category of a Frobenius exact category (see  \cite{Kr} or \cite{LH}). Algebraic triangulated categories include homotopy and derived  categories of abelian  categories, and also their triangulated subcategories and Verdier localizations. All triangulated categories that we come across in this paper are algebraic.

\medskip
For an abelian category $\AA$ we denote by $D^b(\AA)$ its bounded derived category. We will tacitly identify $\AA$ with the image of the fully faithful functor $\AA\to D^b(\AA)$. For a  subcategory
$\TT\subset D^b(\AA)$ we denote 
$$\TT_{\Ab}:=\TT\cap \AA,$$
this is a subcategory in $\AA$. In particular, for a subcategory or an object $\SS\subset \AA$ we have
$$\SS^\perp_{\Ab}:=\SS^\perp\cap \AA=\{A\in \AA\mid \Ext^i(S,A)=0\quad\text{for all $i$ and $S\in\SS$}\}$$
and similarly for $^\perp\SS_{\Ab}$.

We will speak about exceptional and sphere-like objects and collections in an abelian category $\AA$, meaning that they are  such in $D^b(\AA)$.

Recall that the Grothendieck group $K_0(\AA)$ of an abelian category $\AA$ is the abelian group generated by the isomorphism classes $[A]$ of objects $A\in\AA$ and relations  $-[A]+[B]-[C]=0$ for all exact triples
$0\to A\to B\to C\to 0$ in $\AA$. Similarly, the Grothendieck group $K_0(\DD)$ of a triangulated category $\DD$ is generated by the isomorphism classes $[X]$ of objects $X\in\DD$ and relations  $[X]+[X[1]]=0$ and $-[X]+[Y]-[Z]=0$ for all exact triangles
$X\to Y\to Z\to X[1]$ in $\DD$. There is a natural isomorphism $K_0(\AA)\to K_0(D^b(\AA))$, sending $[A]$ to $[A]$ for any $A\in\AA$. Assume that $\DD$ is $\k$-linear and  proper, then there is bilinear Euler form $\chi$ on $K_0(\DD)$, defined by 
$$\chi([X],[Y])=\sum_{i\in\Z}(-1)^i\dim\Hom^i(X,Y)$$
for all $X,Y\in\DD$.

\subsection{Background on dg and $A_{\infty}$-categories}

We refer to \cite{BondalLarsenLunts}, \cite{Keller_DerivingDG}, \cite{Keller_Ainfinity}, \cite{LH} for main definitions and constructions related to differential graded (=dg) categories and 
$A_{\infty}$-categories. We will be very brief here as we use these machinery only in the proof of Theorem~\ref{th_39}.

By definition, a differential graded (dg) category is a $\k$-linear category, whose $\Hom$ spaces carry a structure of differential complexes of $\k$-vector spaces, composition maps are homogeneous and satisfy graded  Leibniz rule:  
$$d(a\cdot b)=d(a)\cdot b + (-1)^p a\cdot d(b)$$
for any $X,Y,Z\in \Ob(\AA)$, $a\in \Hom^p(Y,Z), b\in \Hom^q(X,Y)$.
Given a dg category, one defines its \emph{derived category} $D(\AA)$ as the localisation of the homotopy category of right dg $\AA$-modules by the class of quasi-isomorphisms, see \cite{Keller_DerivingDG} for details. Then $D(\AA)$  is a triangulated category, it contains a family of \emph{representable modules} $h^X$, $X\in\Ob(\AA)$. \emph{Perfect derived category} $\Perf(\AA)$ of $\AA$ is defined as the smallest triangulated subcategory in $D(\AA)$ containing all modules $h^X$ for $X\in\Ob(\AA)$ and closed under taking direct summands.

\medskip
By definition, an $A_{\infty}$-category $\AA$ over $\k$ is given by 
\begin{itemize}
    \item a family of objects $\Ob(\AA)$, 
    \item a family of $\Z$-graded vector spaces $\Hom(X,Y)$ for all $X,Y\in Ob(\AA)$,
    \item for any $n\ge 1$ a $\k$-linear  homogeneous of degree $2-n$ homomorphism
$$m_n\colon \Hom(X_{n-1},X_n)\otimes \ldots\otimes \Hom(X_1,X_2)\otimes \Hom(X_0,X_1)\to \Hom(X_{0},X_n),$$
which are subject to relations
\begin{equation}
\label{eq_relainfty}
    \sum_{n=r+s+t, r,t\ge 0, s\ge 1} (-1)^{r+st}m_{r+1+t}\circ (1^{\otimes r}\otimes m_s\otimes 1^{\otimes t})=0
\end{equation}
for any $n\ge 1$.
\end{itemize}

Parallel to the case of dg categories, for an $A_{\infty}$-category one defines its \emph{derived category} $D^\infty(\AA)$ and \emph{perfect derived category} $\Perf^\infty(\AA)\subset D^\infty(\AA)$ (see~\cite{Keller_Ainfinity}), these categories are triangulated. The latter is generated as an idempotent-closed triangulated subcategory in $D^\infty(\AA)$ by the family of  \emph{representable modules} $h^X, X\in\Ob(\AA)$.

An $A_{\infty}$-category is called \emph{strictly unital} if there are elements $1_X\in \Hom(X,X)$ of degree~$0$ for all $X\in\Ob(\AA)$ such that 
$$m_n(a_n\otimes\ldots\otimes a_1)=0$$
as soon as at least one of $a_1,\ldots,a_n$ is $1_X$ for some $X$, and 
$$m_2(a\otimes 1_X)=a=m_2(1_Y\otimes a)$$
for any $a\in\Hom(X,Y)$.

An $A_{\infty}$-category is called \emph{minimal} if $m_1=0$. 

Assume $\AA$ is a strictly unital $A_{\infty}$-category
and all $m_n, n\ge 3$ vanish.
Then relations~\eqref{eq_relainfty} take the form
$$m_1^2=0, \quad m_1m_2=m_2(1\otimes m_1+m_1\otimes 1), \quad m_2(m_2\otimes 1)=m_2(1\otimes m_2),$$
so that  $m_2$ is associative. Hence $m_2$ makes $\AA$ a category (a general $A_{\infty}$-category is not a category!), $m_1$ makes any $\Hom(X,Y)$ a differential complex, and composition maps
$\Hom(Y,Z)\otimes \Hom(X,Y)\to\Hom(X,Z)$ are compatible with the differentials.
Therefore,~$\AA$ is in fact a dg category.

\section{Abelian hereditary categories, their derived categories, and their subcategories}
\label{section_hereditary}

In this paper we deal with hereditary abelian categories and their derived categories, and study thick subcategories in them. Here we give necessary definitions and explain that, given a hereditary abelian category $\AA$, thick subcategories in $\AA$ and in $D^b(\AA)$ are in a very natural bijection.

\begin{definition}
An abelian category $\AA$ is called \emph{hereditary} if $\Ext_\AA^i(X,Y)=0$ for all $i\ge 2$, $X,Y\in\AA$.
\end{definition}

There are several reasonable properties for a subcategory in an abelian category. 

\begin{definition}[See {\cite[Sect. 4.4]{Kr}}, \cite{Di}, \cite{Hovey}, \cite{Cheng}]
Let $\SS\subset \AA$ be a full subcategory of an abelian  category. Then
\begin{itemize}
    \item $\SS$ is called  \emph{wide}  if $\SS$ is closed under taking kernels, cokernels, and extensions,
    \item $\SS$ is called \emph{thick} if $\SS$ is closed under taking direct summands and \emph{2-of-the-3 condition} is satisfied: if in an exact sequence $0\to X\to Y\to Z\to 0$ in $\AA$ two of $X,Y,Z$ are in $\SS$, then the third one is also in $\SS$,
    \item $\SS$ is called \emph{Serre} if $\SS$ is closed under taking subobjects, quotient objects, and extensions.
\end{itemize}
\end{definition}

Note that a wide subcategory of an abelian category is abelian, and the inclusion functor is exact. 
Also note that the terminology slightly varies across the literature.

\begin{prop}
Any wide subcategory of an abelian category is thick. Moreover, if $\AA$ is hereditary then any thick subcategory in $\AA$ is wide.      
\end{prop}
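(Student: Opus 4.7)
My plan is to prove the two implications separately, with the second being the substantive one.

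For wide $\Rightarrow$ thick, both required properties follow formally. If $X=Y\oplus Z\in\SS$ with associated idempotent projection $p\in\End(X)$, then $Y=\Ker(1-p)$ is the kernel of an endomorphism of $X\in\SS$, hence $Y\in\SS$. The 2-of-the-3 property reduces immediately to closure under kernels, cokernels, and extensions according to which of the three terms is unknown.

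For thick $\Rightarrow$ wide when $\AA$ is hereditary, closure of $\SS$ under extensions is itself one of the three cases of 2-of-the-3 and requires no argument, so it suffices to treat kernels and cokernels. Given a morphism $f\colon A\to B$ with $A,B\in\SS$, set $K=\Ker f$, $I=\im f$, $C=\coker f$. Once $I\in\SS$ is known, applying 2-of-the-3 to the exact sequences $0\to K\to A\to I\to 0$ and $0\to I\to B\to C\to 0$ yields $K,C\in\SS$. Thus everything reduces to proving $I\in\SS$.

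To produce $I$ in $\SS$, let $\xi=[0\to K\to A\to I\to 0]\in\Ext^1(I,K)$ and $\eta=[0\to I\to B\to C\to 0]\in\Ext^1(C,I)$. Applying $\Ext^\bul(-,K)$ to $\eta$ gives a long exact sequence containing the segment $\Ext^1(B,K)\to\Ext^1(I,K)\to\Ext^2(C,K)$, whose final term vanishes by heredity. Therefore $\xi$ lifts to a class $\tilde\xi\in\Ext^1(B,K)$, represented by a short exact sequence $0\to K\to E\to B\to 0$ whose pullback along the inclusion $\iota\colon I\hookrightarrow B$ recovers $\xi$; consequently $A\cong E\times_B I$. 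The standard pullback sequence then reads
\begin{equation*}
0\to A\to E\oplus I\to B\to 0.
\end{equation*}
Since $A,B\in\SS$, the 2-of-the-3 property forces $E\oplus I\in\SS$, and closure of $\SS$ under direct summands gives $I\in\SS$, as needed.

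The main obstacle I anticipate is exactly this last construction: one cannot reach $I$ by a direct 2-of-the-3 argument on the two original sequences, since each has an unknown outer term ($K$ or $C$). The trick is to use the heredity hypothesis $\Ext^2=0$ to manufacture an auxiliary extension $E$ of $B$ by $K$ realizing $\xi$, and then to exploit the resulting pullback square so that $I$ appears as a direct summand of the middle term of a short exact sequence whose outer terms are $A$ and $B$ --- bypassing any need to control $E$ independently.
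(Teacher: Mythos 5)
Your proof is correct. The paper itself gives no argument here — it dismisses the first implication as obvious and cites Dichev and Krause for the second — so you have supplied the proof the paper delegates to the literature. Your argument for the substantive direction (lift the class of $0\to K\to A\to I\to 0$ along the surjection $\Ext^1(B,K)\twoheadrightarrow\Ext^1(I,K)$ furnished by $\Ext^2(C,K)=0$, then read off $I$ as a direct summand of the middle term of the pullback sequence $0\to A\to E\oplus I\to B\to 0$) is exactly the standard one found in those references, and all the steps check out: the pullback sequence is exact, its outer terms lie in $\SS$, and thickness then yields $I\in\SS$, after which $K$ and $C$ follow from 2-of-the-3.
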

\begin{proof}
    The first is obvious, for the second see \cite[Th. 3.3.1]{Di} or \cite[Rem. 4.4.16]{Kr}.
\end{proof}

In this paper we will deal with wide (= thick) subcategories of hereditary abelian categories and will call them thick.
We will make use of the following important 
\begin{prop}[See {\cite[Lemma A.1, Prop. A.2]{RvdB}}]
\label{prop_hersubcat1}
\begin{enumerate}
    \item An abelian category $\AA$ is hereditary if and only if the functor $\Ext^1_\AA(X,-)$ is right exact for any $X\in\AA$.
    \item Let $\AA$ be hereditary and $\SS\subset\AA$ be a thick subcategory. Then $\SS$ is also a hereditary abelian category, and the inclusion functor is exact.  
\end{enumerate}
\end{prop}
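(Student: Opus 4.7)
The plan is to handle the two parts separately, both using Yoneda $\Ext$.

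For (1), the ``only if'' direction is immediate: the long exact sequence
\[
\ldots \to \Ext^1(X,A) \to \Ext^1(X,B) \to \Ext^1(X,C) \to \Ext^2(X,A) \to \ldots
\]
attached to a short exact sequence $0 \to A \to B \to C \to 0$ becomes right exact as soon as $\Ext^2(X,A) = 0$. For the converse, assuming $\Ext^1(X,-)$ is right exact for every $X$, I would show by induction on $n \ge 2$ that $\Ext^n(X,Y) = 0$. A Yoneda class $\xi \in \Ext^n(X,Y)$ is represented by an exact sequence $0 \to Y \to M_n \to \ldots \to M_1 \to X \to 0$; setting $K = \coker(Y \to M_n)$, split it into $0 \to Y \to M_n \to K \to 0$ and a length-$(n-1)$ extension representing some $\eta \in \Ext^{n-1}(X,K)$, with $\xi$ the image of $\eta$ under the connecting homomorphism $\Ext^{n-1}(X,K) \to \Ext^n(X,Y)$. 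For $n=2$ right exactness of $\Ext^1(X,-)$ forces this connecting map to vanish, hence $\xi=0$; for $n \ge 3$ the induction hypothesis gives $\Ext^{n-1}(X,K)=0$ and again $\xi=0$.

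For (2), I would first observe that the inclusion $\SS \hookrightarrow \AA$ is exact: for any morphism $f\colon X \to Y$ in $\SS$, the kernel and cokernel computed in $\AA$ lie in $\SS$ by closure under kernels and cokernels (wideness, which coincides with thickness since $\AA$ is hereditary), and they manifestly satisfy the universal properties inside $\SS$. Consequently the short exact sequences of $\SS$ are exactly the short exact sequences of $\AA$ with all three terms in $\SS$. Next I would identify $\Ext^1_\SS(X,Y)$ with $\Ext^1_\AA(X,Y)$ for $X,Y \in \SS$: any extension $0 \to Y \to E \to X \to 0$ in $\AA$ has $E \in \SS$ by the 2-of-3 property, and an equivalence of two such extensions in $\AA$ is implemented by a morphism between middle terms that both already lie in $\SS$, so the equivalence takes place in $\SS$.

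Given these identifications, for a short exact sequence $0 \to A \to B \to C \to 0$ in $\SS$ the tail of the $\Ext$-sequence in $\AA$ reads $\Ext^1_\AA(X,B) \to \Ext^1_\AA(X,C) \to 0$ since $\AA$ is hereditary, which under the identification above is precisely right exactness of $\Ext^1_\SS(X,-)$; applying part (1) to $\SS$ then yields that $\SS$ is hereditary. The only delicate point in the whole argument is the identification $\Ext^1_\SS = \Ext^1_\AA$ restricted to $\SS$, which rests entirely on closure of $\SS$ under extensions; this is precisely the feature that makes the paper's notion of ``thick'' (equivalently, wide, in the hereditary setting) the correct hypothesis.
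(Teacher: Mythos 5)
Your argument is correct. The paper offers no proof of this proposition at all --- it defers entirely to the cited reference \cite[Lemma A.1, Prop. A.2]{RvdB} --- so there is no in-text argument to compare against; what you have written is a correct self-contained reconstruction along the same lines as that reference. Part (1) is the standard dimension-shifting computation with Yoneda $\Ext$ (the right notion here, since $\AA$ need not have enough projectives or injectives): the key point, which you identify, is that right exactness applied to $0\to Y\to M_2\to K\to 0$ makes $\Ext^1(X,M_2)\to\Ext^1(X,K)$ surjective and hence kills the connecting map into $\Ext^2(X,Y)$, which starts the induction. Part (2) correctly leans on the fact, recorded in the paper just before this proposition, that thick coincides with wide when $\AA$ is hereditary, so that $\SS$ is closed under kernels, cokernels and extensions; the identification $\Ext^1_\SS(X,Y)=\Ext^1_\AA(X,Y)$ for $X,Y\in\SS$ is the right pivot and your justification of it (middle terms lie in $\SS$ by the 2-of-3 property, equivalences of extensions live in the full subcategory) is complete. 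One cosmetic point: right exactness of $\Ext^1_\SS(X,-)$ requires exactness at the middle term $\Ext^1_\SS(X,B)$ as well as surjectivity onto $\Ext^1_\SS(X,C)$; this is covered because the low-degree exact sequence $\Hom(X,C)\to\Ext^1(X,A)\to\Ext^1(X,B)\to\Ext^1(X,C)$ holds in any abelian category and transfers under your identification, but it is worth stating explicitly rather than only quoting the tail $\Ext^1_\AA(X,B)\to\Ext^1_\AA(X,C)\to 0$.
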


We will also deal with subcategories of derived categories. Recall 
\begin{definition}
A full subcategory $\TT$ in a triangulated category is \emph{thick} if $\TT$ is closed under taking shifts, cones, and direct summands. We will also assume that $\TT$ is closed under isomorphisms.
\end{definition}

Derived category $D^b(\AA)$ of a hereditary abelian category $\AA$ has a simple structure: recall well-known
\begin{prop}[See, for example, {\cite[Prop. 4.4.15]{Kr}}]
Let $\AA$ be abelian hereditary category. Then  any object $X\in D^b(\AA)$ is isomorphic to the direct sum of its shifted cohomology:
$$X\cong \oplus_iH^i(X)[-i].$$ 
In particular, any indecomposable object in $D^b(\AA)$ is a shift of some indecomposable object in $\AA$. 
\end{prop}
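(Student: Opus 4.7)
The plan is to induct on the \emph{amplitude} of $X$, by which I mean the number of indices $i$ with $H^i(X)\ne 0$. The base case of amplitude $0$ is trivial ($X=0$) and of amplitude $1$ is immediate: if $H^i(X)=0$ for $i\ne n$, then the canonical truncation $\tau_{\le n}X\to X$ and $X\to \tau_{\ge n}X$ are quasi-isomorphisms, so $X\cong H^n(X)[-n]$ in $D^b(\AA)$. So I will focus on the inductive step.

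Suppose $X$ has non-zero cohomology in degrees $n_1<n_2<\ldots<n_k$ with $k\ge 2$. I would apply the canonical truncation triangle
\begin{equation*}
\tau_{\le n_{k-1}}X\to X\to \tau_{\ge n_k}X\to \tau_{\le n_{k-1}}X[1].
\end{equation*}
Since $\tau_{\ge n_k}X$ has cohomology only in degree $n_k$, by the base case it is isomorphic to $H^{n_k}(X)[-n_k]$. By the inductive hypothesis applied to $\tau_{\le n_{k-1}}X$ (which has amplitude $k-1$), we have $\tau_{\le n_{k-1}}X\cong \bigoplus_{i=1}^{k-1}H^{n_i}(X)[-n_i]$. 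The key step is to show that the connecting morphism, which lives in
\begin{equation*}
\Hom_{D^b(\AA)}\bigl(H^{n_k}(X)[-n_k],\,\tau_{\le n_{k-1}}X[1]\bigr)\cong \bigoplus_{i=1}^{k-1}\Ext^{n_k-n_i+1}_\AA\bigl(H^{n_k}(X),H^{n_i}(X)\bigr),
\end{equation*}
vanishes. But $n_k-n_i+1\ge 2$ for every $i<k$, so hereditarity of $\AA$ forces each summand to be zero. Hence the truncation triangle splits, yielding
\begin{equation*}
X\cong \tau_{\le n_{k-1}}X\oplus H^{n_k}(X)[-n_k]\cong \bigoplus_{i=1}^{k}H^{n_i}(X)[-n_i],
\end{equation*}
which completes the induction.

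For the second statement, suppose $X\in D^b(\AA)$ is indecomposable. By what we just proved, $X\cong \bigoplus_i H^i(X)[-i]$. Since $X$ is indecomposable, exactly one summand is non-zero, so $X\cong H^n(X)[-n]$ for some $n$. If $H^n(X)$ decomposed non-trivially in $\AA$, then $X$ would decompose non-trivially in $D^b(\AA)$ via the fully faithful embedding $\AA\hookrightarrow D^b(\AA)$, contradicting indecomposability of $X$. Hence $H^n(X)$ is an indecomposable object of $\AA$ and $X$ is its shift.

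The only delicate point is the vanishing of the connecting morphism, and this is precisely where hereditarity enters; everything else is the standard truncation machinery for $t$-structures on $D^b(\AA)$, together with the formula $\Hom_{D^b(\AA)}(A,B[p])=\Ext^p_\AA(A,B)$ for $A,B\in\AA$, which is available in any hereditary abelian category.
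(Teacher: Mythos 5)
Your proof is correct and is essentially the standard argument: the paper itself gives no proof, only a citation to \cite[Prop.\ 4.4.15]{Kr}, and the argument there is the same induction on the number of non-vanishing cohomology objects, splitting the truncation triangle because the connecting morphism lands in $\Ext^{\ge 2}$-groups that vanish by hereditarity. All the details you flag (the identification of derived-category Homs with Yoneda Ext, the splitting of a triangle with zero connecting map, and the full faithfulness of $\AA\hookrightarrow D^b(\AA)$ for the indecomposability claim) are handled correctly.
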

This allows to relate thick subcategories in $D^b(\AA)$ and in $\AA$.

\begin{prop}
\label{prop_hersubcat2}
Let $\AA$ be an abelian hereditary category. 
\begin{enumerate}
    \item Then there is a bijection between thick subcategories in $\AA$ and in $D^b(\AA)$, given by assignments
\begin{align*}
    \AA\supset \SS&\mapsto \{X\mid H^i(X)\in \SS\quad\text{for all $i$}\}=\langle \SS\rangle\subset D^b(\AA),\\
    \AA\supset \AA\cap \TT &\mapsfrom \TT\subset D^b(\AA).
\end{align*}
    \item Moreover, let $\SS\subset \AA$ and $\TT\subset D^b(\AA)$ be corresponding thick subcategories. Then 
    the derived functor  $D^b(\SS)\to D^b(\AA)$ of the inclusion is fully faithful with the image~$\TT$.  
\end{enumerate}
\end{prop}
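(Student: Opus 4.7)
The plan is to establish part (1) by checking that the two assignments are well-defined and mutually inverse, and then leverage the splitting property of objects in $D^b(\AA)$ for hereditary $\AA$ to deduce part (2).

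First, I would verify that each assignment lands in the right class. Given a thick $\SS\subset\AA$, set $\TT:=\{X\in D^b(\AA)\mid H^i(X)\in\SS\text{ for all }i\}$. Closure under shifts is automatic, and closure under cones follows from the long cohomology exact sequence combined with the $2$-of-$3$ property of $\SS$ in $\AA$: a cone $Y\to Z\to W\to Y[1]$ with $Y,Z\in\TT$ produces, for each $i$, a four-term exact sequence that can be broken into two short exact sequences in $\AA$ with outer terms in $\SS$, forcing $H^i(W)\in\SS$. Closure under direct summands is immediate from that of $\SS$. Conversely, given a thick triangulated $\TT\subset D^b(\AA)$, its intersection $\SS=\TT\cap\AA$ inherits closure under direct summands; a short exact sequence in $\AA$ embeds as an exact triangle in $D^b(\AA)$, so the $2$-of-$3$ property for $\SS$ follows from the corresponding closure of $\TT$ under cones and shifts.

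Next, I would show the assignments are mutually inverse. The crucial ingredient is the preceding proposition, which gives the splitting $X\cong\bigoplus_i H^i(X)[-i]$ in $D^b(\AA)$. Starting from $\SS$ and forming $\TT$ as above, any $X\in\TT$ is a finite direct sum of shifts of objects of $\SS$, so $X\in\langle\SS\rangle$; the reverse inclusion $\langle\SS\rangle\subset\TT$ is automatic since $\TT$ is thick and contains $\SS$. Also $\TT\cap\AA=\SS$ directly. Starting from $\TT$, passing to $\SS=\TT\cap\AA$ and back: if $H^i(X)\in\SS$ for all $i$ then $X\cong\bigoplus H^i(X)[-i]\in\TT$, and conversely for any $X\in\TT$ each $H^i(X)[-i]$ is a direct summand, hence lies in $\TT$, hence $H^i(X)\in\SS$.

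For part (2), I would first note by Proposition~\ref{prop_hersubcat1} that $\SS$ is itself hereditary, so the splitting property also holds in $D^b(\SS)$. To prove fully faithfulness of the derived inclusion, it suffices (by the splitting in both source and target) to show that for all $A,B\in\SS$ and all $i$, the natural map $\Ext^i_\SS(A,B)\to\Ext^i_\AA(A,B)$ is an isomorphism. For $i=0$ this is fullness of $\SS\subset\AA$; for $i\ge 2$ both sides vanish by hereditariness. The essential case $i=1$ is where the main subtlety sits: I would represent elements of $\Ext^1_\AA(A,B)$ by Yoneda extensions $0\to B\to E\to A\to 0$ and invoke the $2$-of-$3$ property to force $E\in\SS$, giving bijectivity. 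Finally, the essential image equals $\TT$: the splitting shows every object of $D^b(\SS)$ maps to a direct sum of shifts of objects of $\SS\subset\TT$, and every $Y\in\TT$ is isomorphic to $\bigoplus_i H^i(Y)[-i]$ with $H^i(Y)\in\SS$, which is visibly the image of the corresponding complex assembled in $D^b(\SS)$.

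The main obstacle is the $\Ext^1$ comparison; everything else is formal manipulation of the splitting. It is precisely here that the $2$-of-$3$ axiom built into the definition of a thick subcategory of a hereditary abelian category plays its role, preventing Yoneda extensions from leaking out of $\SS$ and thereby ensuring that $D^b(\SS)\hookrightarrow D^b(\AA)$ identifies $D^b(\SS)$ with the thick subcategory $\TT$.
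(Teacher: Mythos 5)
Your argument is correct in substance, but it takes a genuinely different route from the paper: the paper proves this proposition purely by citation, deferring part (1) to Br\"uning's theorem (or Krause's Prop.~4.4.17) and part (2) to the same reference combined with Proposition~\ref{prop_hersubcat1}. What you have written is the self-contained proof that those references carry out. The trade-off is the usual one: the citation keeps the paper short, while your version makes visible exactly where heredity enters (the splitting $X\cong\bigoplus_i H^i(X)[-i]$ for the bijection and for reducing full faithfulness to an $\Ext$-comparison, the vanishing of $\Ext^{\ge 2}$, and the hereditariness of $\SS$ itself via Proposition~\ref{prop_hersubcat1}) and where thickness of $\SS$ enters (the $\Ext^1$ surjectivity via extension-closure).

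One step should be tightened. In the cone-closure argument you break the five-term segment of the long exact sequence into two short exact sequences and assert that the outer terms lie in $\SS$; those outer terms are a cokernel and a kernel of maps between objects of $\SS$, so this needs $\SS$ to be closed under kernels and cokernels, which the $2$-of-$3$ property alone does not give. This is supplied by the paper's earlier proposition that a thick subcategory of a hereditary abelian category is automatically wide, and the same wideness is quietly used again when you need pullbacks and pushouts (Baer sums) of extensions to stay in $\SS$ so that the Yoneda $\Ext^1$-comparison is a map of groups. With that proposition invoked explicitly, the proof is complete.
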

\begin{proof}
    This is \cite[Th. 5.1]{Bruning} or \cite[Prop. 4.4.17]{Kr} for (1) and Prop.~\ref{prop_hersubcat1} combined with \cite[Prop. 4.4.17]{Kr} for~(2). 
\end{proof}

\section{Quiver-like categories}
\label{section_QL}

Here we introduce one of the main characters of this paper, quiver-like categories. 
Most results of this section can be found in~\cite{EL}. However, now  we present  a different point of view and work in greater generality. In particular, we do not assume quivers to be finite, in contrast to loc. cit., do not rely on enhancements  of triangulated categories, and remove formality assumptions.

\medskip
A \emph{quiver} $Q=(Q_0,Q_1)$ is given by a set $Q_0$ of vertices, a set $Q_1$ of arrows, and two maps~$s$~(source) and $t$ (target) from $Q_1$ to $Q_0$. Let $\k Q$ denote the path algebra of $Q$ over a field~$\k$. Let $\rad(\k Q)\subset \k Q$ be the two-sided ideal, spanned by all paths of positive length.  A \emph{(right) representation} of $Q$ over $\k$ is a collection of $\k$-vector spaces $V_i$ for $i\in Q_0$, and homomorphisms $V_{t(a)}\to V_{s(a)}$ for $a\in Q_1$. 
Any representation $(V_i)_{i\in Q_0}$ of $Q$ can be viewed as a (right) module $\oplus_{i\in Q_0}V_i$ over $\k Q$. 
A representation $(V_i)_{i\in Q_0}$ is \emph{finite-dimensional} if the vector space $\oplus_{i\in Q_0}V_i$ is finite-dimensional. A representation $(V_i)_{i\in Q_0}$ is \emph{radical-nilpotent} if  $(\oplus_{i\in Q_0}V_i)\cdot \rad(\k Q)^N=0$ for some $N\in \N$.
Let $\Modd \k Q$ denote the category of right representations of $Q$ over $\k$ (we will also call them modules), it is a hereditary abelian category. 
\begin{definition}
    Let $\moddo\k Q\subset \Modd \k Q$ be the full subcategory of finite-dimensional radical-nilpotent representations, this is a thick and wide (and even Serre) subcategory. Let $D^b_0(\k Q)\subset D^b(\Modd \k Q)$ denote the full subcategory of complexes with cohomology  in $\moddo \k Q$, this is a thick subcategory.

\end{definition}
Denote by $s_i$ the simple module concentrated in vertex $i\in Q_0$. It is easy to see that $\moddo\k Q$ consists precisely of modules having a finite filtration with quotients $s_i$, and 
$D^b_0(\k Q)$ is generated by the modules $s_i$ as triangulated category.
\begin{definition}
An abelian category $\AA$ is \emph{quiver-like} if there is an additive equivalence  $\AA\to \moddo\k Q$ for some quiver $Q$.
A triangulated category $\TT$ is called \emph{quiver-like} if there is an exact  equivalence $\TT\to D^b_0(\k Q)$ for some quiver $Q$.
\end{definition}

\begin{remark}
\label{rem_uniquemoddo}
Quiver $Q$ can be reconstructed from the associated abelian category $\moddo\k Q$  as the $\Ext$-quiver of the collection of simple objects, see Definition~\ref{def_extquiver} below. 

On the contrary, there are non-trivial equivalences between triangulated quiver-like categories. For example, 
$D^b_0(\k Q)\cong D^b_0(\k Q')$ if $Q$ is finite acyclic and $Q'$ is obtained from $Q$ by a sequence of reflections (known since \cite{Happel}, see also \cite{Parthasarathy}). We do not know when, for general quivers $Q,Q'$, the categories $D^b_0(\k Q)$ and $D^b_0(\k Q')$ are equivalent.
\end{remark}

\begin{prop}
\label{prop_moddo}
The category $\moddo\k Q$ is hereditary, and the natural functor 
$$D^b(\moddo\k Q)\to D^b_0(\k Q)$$ is an equivalence.    
\end{prop}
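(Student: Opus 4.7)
The plan is to derive both claims as essentially immediate consequences of the material collected in Section~\ref{section_hereditary}, once one identifies $\moddo\k Q$ as a thick subcategory of the hereditary category $\Modd\k Q$.

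First I would dispose of hereditarity. It is standard that $\Modd\k Q$ is hereditary, since the path algebra $\k Q$ has global dimension at most one. The paper has already noted that $\moddo\k Q\subset \Modd\k Q$ is thick (even Serre), so Proposition~\ref{prop_hersubcat1}(2) applies verbatim and yields that $\moddo\k Q$ is itself an abelian hereditary category, with exact inclusion into $\Modd\k Q$.

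Next I would address the derived equivalence by invoking Proposition~\ref{prop_hersubcat2}. Take $\AA=\Modd\k Q$ and $\SS=\moddo\k Q$. Part~(1) of that proposition identifies the thick subcategory $\TT\subset D^b(\Modd\k Q)$ corresponding to $\moddo\k Q$ as
$$\TT=\{X\in D^b(\Modd\k Q)\mid H^i(X)\in\moddo\k Q \text{ for all } i\},$$
which is exactly $D^b_0(\k Q)$ by the definition recalled just before the proposition. Part~(2) then asserts that the derived functor $D^b(\moddo\k Q)\to D^b(\Modd\k Q)$ of the exact inclusion is fully faithful with image $\TT=D^b_0(\k Q)$. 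This is precisely the equivalence we seek.

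The one point worth double checking—and the only possible obstacle—is the compatibility of the two characterisations of $D^b_0(\k Q)$ floating around in the paper: on the one hand, the triangulated subcategory of $D^b(\Modd\k Q)$ generated by the simples $s_i$, and on the other hand, the full subcategory of complexes with cohomology in $\moddo\k Q$. The former is contained in the latter since each $s_i$ has cohomology in $\moddo\k Q$ and the latter is closed under shifts and cones (using that $\moddo\k Q$ is thick in $\Modd\k Q$). Conversely, by the splitting $X\cong\bigoplus_i H^i(X)[-i]$ valid in the derived category of any hereditary abelian category, any complex with cohomology in $\moddo\k Q$ is a finite direct sum of shifts of objects of $\moddo\k Q$; and each object of $\moddo\k Q$ admits a finite filtration with simple quotients $s_i$, hence lies in the triangulated subcategory generated by the $s_i$. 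Once this is noted, the two parts of the proposition are immediate.
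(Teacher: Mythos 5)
Your argument is correct and is essentially the paper's own proof: the paper likewise deduces both claims by applying Propositions~\ref{prop_hersubcat1} and~\ref{prop_hersubcat2} to the hereditary category $\Modd\k Q$. Your extra verification that the two descriptions of $D^b_0(\k Q)$ (generated by the simples versus complexes with cohomology in $\moddo\k Q$) agree is a sensible detail the paper leaves implicit, but it does not change the route.
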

\begin{proof}
    It follows from Propositions~\ref{prop_hersubcat1} and~\ref{prop_hersubcat2}, applied to the hereditary category $\Modd\k Q$.
\end{proof}

We have trivial but pleasant
\begin{prop}[{\cite[Corollary 3.4]{EL}}]
\label{prop_pleasant}
Let $\TT$ be a quiver-like triangulated category with an equivalence
$\Phi\colon  D^b_0(\k Q)\to \TT$. Put $t_i := \Phi(s_i)$. Then we have the following.
\begin{enumerate}
\item  For any indecomposable object $X \in  \TT$ there exists $d\in\Z$ and a diagram
$$0=X_0\to X_1\to\ldots\to X_m=X[d]$$
such that $ Cone(X_{j-1}\to X_j)\cong t_{i_j}$ for each $j = 1, \ldots, m$ and some $i_j\in Q_0$.
\item $K_0(\TT) \cong \oplus_{i\in Q_0} \Z\cdot [t_i]$.
\item $\TT$ is generated by $t_i, i\in Q_0$, as a triangulated category.
\end{enumerate}
\end{prop}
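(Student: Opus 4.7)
The plan is to transport everything across the equivalence $\Phi$ and reduce all three claims to statements about $D^b_0(\k Q)$ with $t_i$ replaced by the simple module $s_i$. Proposition~\ref{prop_moddo} identifies $D^b_0(\k Q)$ with $D^b(\moddo\k Q)$, where $\moddo\k Q$ is a hereditary abelian category, so I can apply the structural results for derived categories of hereditary categories from Section~\ref{section_hereditary}.

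For part (1), I would argue as follows. Since $\moddo\k Q$ is hereditary, the structure proposition cited in Section~\ref{section_hereditary} says every indecomposable object $X\in D^b(\moddo\k Q)$ is isomorphic to $M[-d]$ for some indecomposable $M\in \moddo\k Q$ and some $d\in\Z$; equivalently $X[d]\cong M$ lies in the abelian heart. Since $M$ is a finite-dimensional radical-nilpotent $\k Q$-module, by a standard filtration argument (a non-zero such module has a simple submodule isomorphic to some $s_i$, and one proceeds by induction on dimension) it admits a composition series $0=M_0\subset M_1\subset\ldots\subset M_m=M$ with $M_j/M_{j-1}\cong s_{i_j}$. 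Viewed in the derived category, these are exactly exact triangles $M_{j-1}\to M_j\to s_{i_j}\to M_{j-1}[1]$, providing the required diagram with cones $t_{i_j}$.

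For part (2), I would use the natural isomorphism $K_0(\AA)\xrightarrow{\sim} K_0(D^b(\AA))$ valid for any abelian category (recalled in the background section), applied to $\AA=\moddo\k Q$. Thus $K_0(\TT)\cong K_0(\moddo\k Q)$. Since $\moddo\k Q$ is a finite-length category with set of simples $\{s_i\}_{i\in Q_0}$, a Jordan--H\"older/d\'evissage argument shows that $K_0(\moddo\k Q)$ is the free abelian group on $\{[s_i]\}$: the assignment $[M]\mapsto \sum_i (\text{multiplicity of } s_i \text{ in } M)\cdot[s_i]$ is well-defined on short exact sequences (giving a map to the free group), and it is inverse to the obvious map from the free group. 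Transporting via $\Phi$ gives $K_0(\TT)\cong \oplus_{i\in Q_0}\Z\cdot[t_i]$.

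Part (3) is essentially a corollary of (1), but can also be proved directly: any $X\in D^b(\moddo\k Q)$ splits as $\oplus_i H^i(X)[-i]$ by hereditariness, and each $H^i(X)\in\moddo\k Q$ admits a filtration by the $s_j$'s as in part (1), so $X$ lies in the triangulated subcategory generated by $\{s_i\}$. I do not anticipate any serious obstacle: the only subtlety is that one should be careful to cite the hereditary structure theorem and the $K_0$-identification, but both are assumed background. The proof is essentially a bookkeeping exercise once the reduction to the abelian category $\moddo\k Q$ has been carried out.
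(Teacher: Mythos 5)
The paper does not actually prove this proposition; it is stated with a bare citation to \cite[Corollary 3.4]{EL}, so there is no in-text argument to compare against. Your proof is correct and self-contained: reducing along $\Phi$ and Proposition~\ref{prop_moddo} to the hereditary heart $\moddo\k Q$, then using the splitting of objects of $D^b$ of a hereditary category into shifted cohomologies for (1) and (3), composition series by the simples $s_i$ for the filtration, and the standard Jordan--H\"older identification of $K_0$ of a finite-length category with the free group on its simples for (2) -- all of which is available from the paper's background sections. No gaps.
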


Next we provide some general properties of quiver-like categories.
Recall that a quiver is \emph{acyclic} if it has no oriented cycles.

\begin{prop}
\label{prop_QLgeneral}
Let $Q=(Q_0,Q_1)$ be a quiver and $\TT=D^b_0(\k Q)$ be the corresponding quiver-like category. Assume also that $Q$ is connected. Then
\begin{enumerate}
\item $\TT$ has a generator $\Longleftrightarrow$ $Q_0$ is finite. If this is the case, then $G:=\oplus_{i\in Q_0} s_i$  is also a generator.
\item $\TT$ is proper $\Longleftrightarrow$ any two vertices are connected by finitely many arrows.  
\item $\TT$ has a strong generator $\Longleftrightarrow$ $Q_0$ is finite and $Q$ is acyclic.
\item $\TT$ has a Serre functor  $\Longleftrightarrow$ one of the following holds:
\begin{enumerate}
    \item for any vertex $i$ in $Q$ there are only finitely many paths containing  $i$,
    \item $Q$ is a cycle $Z_n$ of length $n\ge 1$,
    \item $Q$ is $A_{\infty,\infty}$: 
    $$\ldots \to \bul\to\bul\to\bul\to \ldots$$
\end{enumerate}
\end{enumerate}
\end{prop}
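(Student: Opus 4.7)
The plan is to prove each part by reducing to statements about the simple objects $\{s_i\}_{i\in Q_0}$, exploiting Proposition~\ref{prop_pleasant}: the simples generate $\TT$ as a triangulated category, and every indecomposable in $\TT$ is a shift of an $\{s_i\}$-filtered module. Parts (1) and~(2) are straightforward. If $Q_0$ is finite, $G:=\bigoplus_i s_i\in\moddo\k Q$ has each $s_i$ as a summand, giving $\langle G\rangle=\TT$; conversely, any single $G\in\TT$ is supported on a finite vertex set $V_G\subset Q_0$, so $\langle G\rangle$ is contained in the thick subcategory generated by $\{s_i:i\in V_G\}$ and omits $s_j$ for $j\notin V_G$. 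For~(2), properness reduces by devissage to $\Hom^\bullet(s_i,s_j)$ being finite-dimensional, which by heredity amounts to $\Ext^1(s_i,s_j)\cong\k^{d_{ij}}$ being finite-dimensional, equivalent to $d_{ij}<\infty$.

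For~(3), a strong generator is in particular a generator, so $Q_0$ must be finite by~(1). When $Q$ is further acyclic, $\moddo\k Q=\modd\k Q$, the algebra $\k Q$ itself is finite-dimensional and lies in $\TT$, and every module has a projective resolution of length $\le1$, hence $\langle \k Q\rangle_1=\TT$. For the converse, if $Q$ contains an oriented cycle then indecomposable uniserial modules of arbitrary composition length exist in $\moddo\k Q$ (by wrapping the cycle). A level-type argument then shows that the number of cone constructions required to build these uniserial modules from any fixed $G\in\TT$ grows without bound with the length, contradicting the existence of a strong generator.

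For~(4), which is the main obstacle, the Serre functor $S$ is determined by its action on simples, so existence reduces to: for each $i\in Q_0$, the object $S(s_i)\in\TT$ representing $X\mapsto\Hom(s_i,X)^*$ must exist, and the collection $\{S(s_i)\}$ must extend to an autoequivalence. By heredity I write $S(s_i)=A_i\oplus B_i[1]$ with $A_i,B_i\in\moddo\k Q$, and testing Serre duality against $X=s_j[k]$ for $k=0,\pm 1$ forces $A_i$ to be an injective object of $\moddo\k Q$ with socle $s_i$ (so $A_i=I_i$ must be finite-dimensional) and $B_i$ to have socle $\bigoplus_j s_j^{d_{ij}}$ prescribed by the out-arrows from $i$, with further compatibility coming from non-simple $X$. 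Symmetric constraints from $S^{-1}(s_i)$ involve a projective with top $s_i$.

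For the $\Leftarrow$ direction of~(4) I construct $S$ in each case: in~(a), the finite-paths condition makes all $I_i$ and $P_i$ finite-dimensional, and a Nakayama-type functor built from $D\k Q$ gives a Serre functor on $\TT$; in cases (b) and~(c), every vertex has a unique out-arrow to some $\sigma(i)$, and the formula $S(s_i):=s_{\sigma(i)}[1]$ satisfies Serre duality with inverse $s_i\mapsto s_{\sigma^{-1}(i)}[-1]$ since $\sigma$ is a bijection. The $\Rightarrow$ direction is the hardest step; I would split into two cases. If every vertex of $Q$ has both an incoming and an outgoing arrow, the socle and $\Ext^1$ constraints on $B_i$ force $d_{ij}\in\{0,1\}$ with exactly one out-arrow from each~$i$ and one in-arrow to each $\sigma(i)$, so by connectedness $Q=Z_n$ or $Q=A_{\infty,\infty}$. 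Otherwise $Q$ has a sink $i_0$ (or dually a source); existence of $S(s_{i_0})=I_{i_0}$ forces finitely many paths ending at $i_0$, and a propagation argument using exactness of $S$, connectedness of $Q$, and the fact that $A_j=I_j$ must be finite-dimensional at every vertex $j$ spreads the finiteness condition to all vertices, yielding~(a).
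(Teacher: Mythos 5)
Parts (1)--(3) are fine and essentially match the paper: the paper also leaves (1)--(2) to the reader, and for (3) it runs the same radical-length count (it filters a complex by powers of $R=\rad(\k Q)$ to get $[\oplus_i s_i]_n=\TT$ in the acyclic case, and for the converse shows by induction that $M\cdot R^{j+1}=0$ for $M\in\moddo\k Q\cap\langle G\rangle_j$, which is exactly your ``level-type argument''). Your use of $\k Q$ itself with $\langle\k Q\rangle_1=\TT$ is a harmless variant. The real divergence is in (4): the paper does not argue directly at all, but invokes the Reiten--Van den Bergh classification of Noetherian hereditary $\Ext$-finite abelian categories with Serre duality (\cite[Th.~B]{RvdB} and the structure theory of $\wdtrep(\Gamma)$), and then only has to decide which categories on that list are of the form $\moddo\k Q$ (ruling out attached infinite rays by the finite-length property). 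Your plan is to replace that citation by a self-contained representability analysis of $S(s_i)=A_i\oplus B_i[1]$; this is an attractive and more elementary route, and the constraints you extract ($\Ext^1(s_j,A_i)=0$, $\dim\Hom(s_j,B_i)=\dim\Ext^1(s_i,s_j)$, $\dim\Hom(s_j,A_i)+\dim\Ext^1(s_j,B_i)=\delta_{ij}$) are correct and do suffice in the ``every vertex has an in- and an out-arrow'' case (an Euler-form computation of $\sum_j\dim\Ext^1(s_j,B_i)$ pins down in- and out-degree $1$ everywhere, whence $Z_n$ or $A_{\infty,\infty}$).

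However, the sink/source case of your $\Rightarrow$ direction has a genuine gap, and two of its intermediate claims are false as stated. First, for a sink $i_0$ it is not true that $S(s_{i_0})=I_{i_0}$: already for $Q=A_2$ with sink $2$ one has $S(s_2)=s_1[1]$, so $A_{i_0}=0$ and the whole Serre image sits in the $B[1]$-summand; the finiteness you want at a sink comes from $S^{-1}(s_{i_0})=P_{i_0}$ (finitely many paths \emph{ending} at $i_0$), dually to $S(s_{j_0})=I_{j_0}$ at a source. Second, ``$A_j=I_j$ must be finite-dimensional at every vertex $j$'' cannot be the engine of the propagation, since $A_j$ may well be zero (again $A_2$). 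Most importantly, the ``propagation argument'' that is supposed to spread finiteness from one sink to condition (a) at \emph{every} vertex of the connected quiver is not supplied, and it is not routine: finiteness of paths ending at $i_0$ controls only the vertices upstream of $i_0$, and passing through a vertex that is connected to $i_0$ only by an undirected path requires a new use of the Serre constraints at that vertex. Since this is precisely the content that the paper outsources to \cite{RvdB}, the proposal as written does not yet constitute a proof of (4)$\Rightarrow$. (Minor additional points: in cases (b),(c) your formula should read $S(s_i)=s_{\sigma^{-1}(i)}[1]$ with $\sigma$ the out-neighbour map, and defining $S$ on simples must be upgraded to an actual autoequivalence, e.g.\ the one induced by the rotation/shift automorphism of the quiver composed with $[1]$, before checking Serre duality by d\'evissage.)
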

\begin{proof}
    
(1) and (2) are easy and left to the reader. 

For (3), assume $Q$ is acyclic with finitely many vertices. Then $\moddo\k Q$ is the category of all finite-dimensional modules over path algebra $\k Q$, let $R=\rad(\k Q)$. Let $n$ be the maximal length of a path in $Q$, then $R^{n+1}=0$. Put $G:=\oplus_{i\in Q_0}s_i$, we claim that  $[G]_n=D^b_0(\k Q)$. Indeed, for a bounded complex $M$ of $\k Q$-modules, consider its  filtration by subcomplexes
$$0=M \cdot R^{n+1}\subset M \cdot R^{n}\subset \ldots \subset M \cdot R\subset M.$$
Any quotient $F_j:=M \cdot R^{j}/M \cdot R^{j+1}$ is a bounded complex, where each term  is a finite direct sum of some modules $s_i, i\in I$. The category $\add(G)$
 of such direct sums is semi-simple, therefore $F_j$ is quasi-isomorphic to the direct sum of its cohomology modules, which are in $\add(G)$. Hence $F_j\in [G]_0$. 
It follows that $M\in [G]_n$, and $G$ is a strong generator of $D^b_0(\k Q)$.

Contrary, assume $D^b_0(\k Q)$ has a strong generator, then any generator is  strong  (see~\cite[Sect. 3.1]{Rouquier}). From (1) we deduce that $Q_0$ is finite and $G$ is a strong generator: $\langle G\rangle_n=D^b_0(\k Q)$ for some $n$. Note that $G\cdot R=0$. Using standard technique, one can prove by induction that $M\cdot R^{j+1}=0$ for any $M\in \moddo\k Q\cap \langle G\rangle_j$. Therefore,  $M\cdot R^{n+1}=0$ for all $M\in \moddo\k Q$. It follows then that the length of paths in $Q$ is bounded by $n$, in particular, $Q$ cannot have cycles.

(4) is based on the classification of Noetherian hereditary abelian categories with a Serre functor, given in~\cite{RvdB}. If (a) holds, then  all indecomposable   
projective and injective $Q$-modules are in $\moddo \k Q$, and $\moddo \k Q$  has enough projectives and injectives. Then the derived Nakayama functor $D^b(\moddo k Q)\to D^b(\moddo \k Q)$ is well-defined, is an equivalence, and serves as a Serre functor on $D^b(\moddo\k Q)=D^b_0(\k Q)$. If (b) or (c) holds, $D^b(\k Q)$ has a Serre functor by~\cite[Th. B]{RvdB}.

The other way, recall that $\AA=\moddo\k Q$ is a connected Noetherian hereditary abelian category of finite length. Assume that $D^b(\AA)$ has a Serre functor (in particular, that $\AA$ is $\Ext$-finite), then one of the following holds by~\cite[Th. B]{RvdB}:
\begin{enumerate}
    \item $\AA$ has no non-zero projective objects. Then $\AA\cong \moddo \k Z_n$ or $\AA\cong \moddo \k A_{\infty,\infty}$ (this is case (a) in \cite[Th. B]{RvdB} or \cite[Th. III.1.1]{RvdB}).
    \item $\AA$ has a non-zero projective object. Then by \cite[Th. II.4.9]{RvdB} $\AA$ is equivalent to category $\wdtrep(\Gamma)$, defined in \cite[Sect. II]{RvdB}, where $\Gamma$ is a quiver obtained in the following way. Start with a quiver $\Gamma'$ such that any vertex is contained in only finitely many paths. Then, to each vertex $\gamma$ in $\Gamma'$, attach finitely many (possibly zero) infinite rays of the form $A_{\infty,0}$: 
    \begin{equation}
    \label{eq_ray}
        \ldots \to 3\to 2\to 1\to 0,
    \end{equation}
    by gluing their terminal vertex $0$ to $\gamma$.
\end{enumerate}
In case (1) it remains to note that equivalence $\moddo \k Q\cong \moddo \k Q'$ implies $Q=Q'$, see Remark~\ref{rem_uniquemoddo}.
In case (2) we prove that $\Gamma=\Gamma'$ actually has no infinite rays attached, and $Q=\Gamma$. We will need only two properties of category $\wdtrep(\Gamma)$ (see~\cite[Th. II.1.3]{RvdB}): 
\begin{enumerate}
    \item[(*)] $\wdtrep(\Gamma)$ contains $\rep(\Gamma)$, the category of finitely presented right representations of~$\Gamma$, as an exact full subcategory, and 
    \item[(**)] $\wdtrep(\Gamma)=\rep(\Gamma)$ if $D^b(\rep(\Gamma))$ has a Serre functor. 
\end{enumerate}
Assume that $\Gamma$ contains an infinite ray~\eqref{eq_ray}. Then projective $\Gamma$-modules $P_0\supset P_1\supset P_2\supset\ldots$ give an infinite descending chain in $\rep(\Gamma)$ and by (*) in $\wdtrep(\Gamma)\cong \AA$. Since $\AA$ is a finite length category, we get a contradiction. Therefore, in $\Gamma=\Gamma'$ every vertex is contained only in finitely many paths, and $\rep(\Gamma)=\moddo \k \Gamma$. By the above arguments, $D^b(\rep(\Gamma))$ has a Serre functor. Using (**), we have 
$$\moddo\k Q=\AA \cong \wdtrep(\Gamma)=\rep(\Gamma) =\moddo \k \Gamma $$
By Remark~\ref{rem_uniquemoddo} we deduce $Q=\Gamma$.
\end{proof}

\begin{remark}
    It is interesting to note that for an acyclic quiver $Q$ with finite $Q_0$ and $Q_1$ one always has $D^b_0(\k Q)\cong D^b(\modd \k Q)$ and the Rouquier dimension (\cite{Rouquier}) of $D^b_0(\k Q)$ is $\le 1$. However, it is not clear what the dimension of $D^b_0(\k Q)$ is for acyclic quivers $Q$ with finite $Q_0$ but possibly infinite $Q_1$.
\end{remark}

\medskip
To recognize a quiver-like category, one needs to find objects, corresponding to simple modules. 
We leave to the reader the following easy, but very important computation:
\begin{lemma}
\label{lemma_comput}
Let $\AA=\Modd\k Q$ and let $s_i\in \AA$ 
be simple modules. Then
\begin{align}
    &\Hom_\AA(s_i,s_i)=\k, \quad \Hom_\AA(s_i,s_j)=0\quad \text{for}\quad i\ne j,\\
    &\dim\Ext^1_\AA(s_i,s_j)=\text{number of arrows in $Q$ from $j$ to $i$},\\
    &\Ext^p_\AA(s_i,s_j)=0\quad\text{for $p\ge 2$.}
\end{align}
\end{lemma}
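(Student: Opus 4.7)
The plan is to combine three ingredients: a direct unraveling of the definition of a representation for the Hom computation, the well-known fact that $\Modd\k Q$ is hereditary for the vanishing of higher Exts, and a standard two-term projective resolution of a simple module for the $\Ext^1$ computation.

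First, I would compute $\Hom_\AA(s_i,s_j)$ directly. A morphism $\varphi\colon s_i\to s_j$ is a family of $\k$-linear maps $\varphi_v\colon (s_i)_v\to(s_j)_v$ compatible with arrow actions. Since $(s_i)_v=0$ for $v\ne i$ and $(s_j)_v=0$ for $v\ne j$, the only potentially nonzero component is $\varphi_i$, and it must land in $(s_j)_i$. This forces $\varphi=0$ unless $i=j$, in which case $\varphi$ is determined by a single scalar in $\k$, proving the first line.

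Second, the vanishing $\Ext^p_\AA(s_i,s_j)=0$ for $p\ge 2$ is immediate from the classical fact that the category $\Modd\k Q$ of representations of a quiver is hereditary (equivalently, the path algebra has global dimension $\le 1$), which was recalled at the start of Section~\ref{section_hereditary}.

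Finally, for $\Ext^1$, I would invoke the standard projective resolution of a simple module. Writing $P_k$ for the indecomposable projective right $\k Q$-module at vertex $k$, there is a short exact sequence in $\Modd\k Q$ of the form
$$0\longrightarrow \bigoplus_{a\in Q_1,\ s(a)=i}P_{t(a)} \longrightarrow P_i\longrightarrow s_i\longrightarrow 0,$$
with source/target conventions chosen to match the direction of multiplication for right modules. Applying $\Hom_\AA(-,s_j)$ and using the evaluation $\Hom_\AA(P_k,s_j)=(s_j)_k$, which equals $\k$ for $k=j$ and $0$ otherwise, the long exact sequence collapses to
$$0\longrightarrow \Hom_\AA(s_i,s_j)\longrightarrow \Hom_\AA(P_i,s_j)\longrightarrow \bigoplus_{a\in Q_1,\ s(a)=i}\Hom_\AA(P_{t(a)},s_j)\longrightarrow \Ext^1_\AA(s_i,s_j)\longrightarrow 0.$$
Reading off dimensions, $\dim_\k\Ext^1_\AA(s_i,s_j)$ is the number of arrows from $j$ to $i$, as claimed. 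The only potential obstacle is keeping the left/right conventions for path composition and module structure consistent with the convention in the paper; once fixed, the whole lemma is a routine unraveling of the resolution.
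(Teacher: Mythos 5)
The paper offers no proof to compare against --- it explicitly leaves this computation to the reader --- and your three-part strategy (direct inspection for $\Hom$, heredity for $\Ext^{\ge 2}$, the two-term projective resolution of a simple for $\Ext^1$) is exactly the standard argument one would supply. The $\Hom$ computation and the appeal to heredity are fine as written (and in fact your resolution, once corrected, gives the $\Ext^{\ge2}$ vanishing for free, since it is a length-one projective resolution).

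The one concrete problem is that the resolution you wrote down is adapted to the \emph{opposite} orientation convention from the paper's. The paper declares that a right representation assigns to each arrow $a$ a map $V_{t(a)}\to V_{s(a)}$; with that convention the indecomposable projective $P_i=e_i\k Q$ has top $s_i$ and radical built from the arrows \emph{into} $i$, so the correct sequence is
$$0\longrightarrow \bigoplus_{a\in Q_1,\ t(a)=i}P_{s(a)}\longrightarrow P_i\longrightarrow s_i\longrightarrow 0.$$
Your version, summed over arrows with $s(a)=i$, is not exact here: for the quiver $1\to 2$ it would declare $s_2=P_2$ projective at the sink $2$, whereas $P_2$ is two-dimensional and $\Ext^1(s_2,s_1)\cong\k$ (matching the lemma: one arrow from $1$ to $2$). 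Moreover, if you run your long exact sequence with the sum taken over $s(a)=i$, the dimension you read off is the number of arrows from $i$ to $j$, i.e.\ the transpose of what the lemma asserts, so the final sentence of your argument does not follow from the displayed resolution. You did flag the convention issue yourself, and the fix is purely mechanical --- swap $s$ and $t$ in the index set and in the projectives --- after which the count comes out as the number of arrows from $j$ to $i$, as claimed. But as written, the displayed sequence and the stated conclusion are inconsistent with each other and with the paper's conventions, so the step needs to be corrected rather than merely ``fixed in principle.''
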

It is now convenient to make
\begin{definition}
Let $\TT$ be a triangulated $\k$-linear category. A collection of
objects $t_i, i\in I$, in $\TT$ is called \emph{vertex-like} if 
$$\Hom(t_i,t_i ) = \k, \quad \Hom(t_i,t_j)=0\quad\text{for $i\ne j$,}\quad
\Hom^p(t_i,t_j) = 0\quad \text{ for all $i, j$ and
$p \ne  0, 1.$}$$ 
\end{definition}

A model example of a vertex-like collection is given (see Lemma~\ref{lemma_comput}) by simple modules~$s_i$ in  $D^b_0(\k Q)$, where $i$ runs through $Q_0$.

We are going to prove next that  any vertex-like collection generates a quiver-like triangulated category.  Lemma~\ref{lemma_comput} suggests how to define a quiver.
\begin{definition}
\label{def_extquiver}
    Let $\{t_i\}_{i\in I}$ be a vertex-like set in a triangulated category. Its \emph{$\Ext$-quiver} is defined as follows: take $Q_0=I$ and take  $\dim \Hom^1(t_j,t_i)$ arrows from $i$ to $j$ (this can be infinite, then $\dim$ denotes the cardinality of a basis). 
\end{definition}

The key argument is contained in the following 

\begin{prop}
\label{prop_39}
Let $\TT$ be an algebraic idempotent-complete $\k$-linear category. Assume~$\TT$ is generated by a vertex-like set $t_i, i\in I$, of objects. Let $\EE$ be the  dg category with  $\mathrm{Ob}(\EE)=I$, and with
$\Hom_\EE^p(i,j)= \Hom^p_\TT(t_i,t_j)$ for all $i,j\in I, p\in \Z$, viewed as a complex with zero differential.
Then one has an exact equivalence 
$\TT\to \Perf(\EE)$, sending~$t_i$ to the representable $\EE$-module  $h^{i}$.
\end{prop}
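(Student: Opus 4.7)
The plan is to realise $\TT$ as the perfect derived category of a small dg category via a dg enhancement, then transfer to a minimal $A_{\infty}$-model on the graded category $\EE$ using Kadeishvili's homological perturbation theorem, and finally show that the vertex-like hypothesis forces all higher $A_{\infty}$-operations to vanish on purely cohomological grounds. Concretely, since $\TT$ is algebraic I would pick a pretriangulated dg enhancement $\AA$ with $H^0(\AA)\simeq\TT$, choose dg-lifts $\tilde t_i$ of the $t_i$, and let $\AA'\subset\AA$ be the full dg subcategory on the $\tilde t_i$. Because $\TT$ is idempotent-complete and $\{t_i\}$ generates $\TT$ (so also classically generates it), Keller's theorem (\cite{Keller_DerivingDG}) yields an exact equivalence
$$\Phi\colon\Perf(\AA')\xrightarrow{\sim}\TT,\qquad h^{i}\mapsto t_i,$$
and by construction $H^{*}\AA'(i,j)=\Hom_{\TT}^{*}(t_i,t_j)=\EE(i,j)$ as graded $\k$-categories.

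Next, I would apply Kadeishvili's theorem (see~\cite{Keller_Ainfinity}) to endow the graded category $\EE$ with a strictly unital minimal $A_{\infty}$-structure $(m_2,m_3,\ldots)$ together with an $A_{\infty}$-quasi-isomorphism $\EE\to\AA'$; this induces an equivalence $\Perf^{\infty}(\EE)\simeq\Perf(\AA')$ sending representables to representables. The remaining task is to prove that $m_n=0$ for every $n\ge 3$: once this is established, $\EE$ is an honest dg category with zero differential and composition $m_2$ (the one already present in the statement of the proposition), so $\Perf^{\infty}(\EE)=\Perf(\EE)$, and composing with $\Phi$ delivers the desired equivalence.

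For the vanishing, fix $n\ge 3$ and homogeneous inputs $a_{k}\in\EE(i_{k-1},i_{k})$ of degrees $p_{k}$. By vertex-likeness each $p_{k}\in\{0,1\}$, and moreover $\EE^{0}(i,j)$ vanishes for $i\ne j$ while $\EE^{0}(i,i)=\k\cdot 1_{i}$. If some $p_{k}=0$, then $a_{k}$ is a scalar multiple of the identity $1_{i_{k}}$, and strict unitality forces $m_{n}(a_{n}\otimes\cdots\otimes a_{1})=0$ since $n\ne 2$. Otherwise all $p_{k}=1$, so the output has degree $\sum p_{k}+(2-n)=n+(2-n)=2$, yet $\EE(i_{0},i_{n})$ is concentrated in degrees $\{0,1\}$; so again the output is zero. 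Hence $m_n=0$ for all $n\ge 3$.

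The main obstacle lies in setting up the $A_{\infty}$-apparatus correctly: one needs Kadeishvili's theorem in a form producing a strictly unital minimal model of a $\k$-linear dg category, together with invariance of $\Perf^{\infty}$ under $A_{\infty}$-quasi-isomorphism. Once these standard tools are invoked, Keller's theorem handles the dg reduction and the vertex-like hypothesis kills all higher products through the short degree-and-unitality dichotomy above. Notably, no formality assumption on the endomorphism dg algebra is required --- formality of $\EE$ emerges from the argument itself, which is precisely the improvement over~\cite{EL}.
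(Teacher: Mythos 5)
Your proposal is correct and follows essentially the same route as the paper: the paper invokes a single result of Lef\`evre-Hasegawa (\cite[Th. 7.6.0.6]{LH}) to produce the strictly unital minimal $A_\infty$-model $\EE$ and the equivalence $\TT\simeq\Perf^\infty(\EE)$ in one step, whereas you unpack that black box into dg enhancement plus Keller plus Kadeishvili, but the decisive argument --- killing $m_n$ for $n\ge 3$ via the dichotomy ``some input is a scalar identity (strict unitality) or all inputs have degree $1$ and the output would land in degree $2$'' --- is identical to the paper's.
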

\begin{proof}
By \cite[Th. 7.6.0.6]{LH}, there exists a strictly unital minimal $A_\infty$-category $\EE$ with 
$\mathrm{Ob}(\EE)=I$ and an equivalence $\TT\to \Perf^\infty(\EE)$ of triangulated categories, sending $t_i$ to the representable $\EE$-module  $h^{i}$.
Moreover, by construction one has 
$\Hom_\EE^p(i,j)= \Hom^p_\TT(t_i,t_j)$ for all $i,j\in I, p\in \Z$. 

We claim that all  operations $m_n, n\ge 3$ vanish. This is  because of the grading: by the definition of a vertex-like collection, there are only scalar endomorphisms and $\Hom$-s of degree $1$ in $\EE$. More precisely, let $a_1,\ldots,a_n$ be homogeneous morphisms in $\EE$, $n\ge 3$. If some  $a_k$ has degree $0$, then $a_k\in \k\cdot 1_i$ for some object $i$ in $\EE$. Hence  
$m_n(a_1\otimes\ldots\otimes a_n)=0$ by strict unitality of $\EE$. Otherwise all $a_i$-s have degree $1$. 
Then 
$$\deg m_n(a_1\otimes\ldots\otimes a_n)=\deg m_n+\sum_{k=1}^n \deg a_k =2-n+n=2,$$
but there are no non-zero morphisms in degree $2$ in $\EE$. 

Hence $\EE$
is actually a dg category, and its differential $m_1$ is zero by the minimality assumption. It remains to note that the perfect derived category $\Perf^\infty(\EE)$ of $A_\infty$-modules over $\EE$ is equivalent to the perfect derived category $\Perf(\EE)$ of dg modules over $\EE$, see \cite[Cor. 4.1.3.11]{LH}.
\end{proof}

\begin{theorem}
\label{th_39}
Let $\TT$ be an algebraic $\k$-linear category. Assume $\TT$ is generated by a vertex-like set  $t_i, i\in I$, of objects. Let $Q$ be the $\Ext$-quiver of this set. Then $\TT$ is quiver-like: there exists an exact equivalence $\TT\to D^b_0(\k Q)$, sending $t_i$ to $s_i$.  
\end{theorem}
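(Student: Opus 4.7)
The plan is to apply Proposition~\ref{prop_39} twice — once to $\TT$ and once to the target $D^b_0(\k Q)$ — and then identify the two resulting dg categories. Applying the proposition to $\TT$ with its generating vertex-like set $\{t_i\}_{i\in I}$ immediately yields an exact equivalence $\TT\xrightarrow{\sim}\Perf(\EE)$ sending $t_i\mapsto h^i$, where $\EE$ is the dg category with $\Ob(\EE)=I$, zero differential, and $\Hom^p_\EE(i,j)=\Hom^p_\TT(t_i,t_j)$.

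To apply Proposition~\ref{prop_39} to $D^b_0(\k Q)$, I first verify the hypotheses: this category is algebraic and idempotent-complete as a thick subcategory of $D^b(\Modd\k Q)$; the simple modules $s_i$ form a vertex-like collection by Lemma~\ref{lemma_comput} together with the hereditarity of $\Modd\k Q$; and $\{s_i\}_{i\in Q_0}$ generates $D^b_0(\k Q)$ by definition of this subcategory. The proposition then produces an equivalence $D^b_0(\k Q)\xrightarrow{\sim}\Perf(\EE')$ sending $s_i\mapsto h^i$, with $\EE'$ the dg category built analogously from the spaces $\Hom^p_{D^b_0(\k Q)}(s_i,s_j)$.

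The main step, which is really the only content beyond Proposition~\ref{prop_39}, is to identify $\EE$ and $\EE'$ as dg categories. They share the same object set $I=Q_0$ and zero differentials; their degree-$0$ components both reduce to the semisimple algebra $\bigoplus_i \k\cdot 1_i$; and their degree-$1$ Hom-spaces have matching dimensions, since Lemma~\ref{lemma_comput} gives $\dim\Ext^1(s_i,s_j)=\#\{\text{arrows }j\to i\text{ in }Q\}$, which equals $\dim\Hom^1_\TT(t_i,t_j)$ by the very definition of the Ext-quiver; higher Hom-spaces vanish on both sides. The composition maps are then essentially rigid: products of two degree-$1$ morphisms automatically vanish as they land in the zero group $\Hom^2$, while compositions involving degree-$0$ morphisms are fixed by strict unitality. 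Choosing any bases in degree $1$ matching the two counts therefore gives an isomorphism $\EE\cong\EE'$, and hence an equivalence $\Perf(\EE)\cong\Perf(\EE')$. Chaining the three equivalences produces $\TT\cong D^b_0(\k Q)$ sending $t_i$ to $s_i$, as required. The only potential obstacle is this last identification, but the vertex-like hypothesis is precisely what makes the dg structure so rigid that matching Hom-space dimensions suffices.
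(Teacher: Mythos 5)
Your proposal is correct and follows essentially the same route as the paper: apply Proposition~\ref{prop_39} to both $\TT$ and $D^b_0(\k Q)$ with their respective vertex-like collections, and identify the two resulting dg categories via Lemma~\ref{lemma_comput}. You are in fact slightly more explicit than the paper about why matching graded Hom-spaces forces an isomorphism of dg categories (degree-$1$ compositions land in the vanishing $\Hom^2$ and degree-$0$ compositions are fixed by unitality), a point the paper leaves implicit when it asserts both categories are equivalent to ``the same'' $\Perf(\EE)$.
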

\begin{proof}
By Lemma~\ref{lemma_comput} 
\begin{equation}
\label{eq_TD}
    \Hom^p_{\TT}(t_i,t_j)\cong \Hom^p_{D^b_0(\k Q)}(s_i,s_j)\quad    \text{for all $i,j\in I$ and $p\in\Z$.}
\end{equation}
Assume first that $\TT$ is idempotent-complete. Then we can apply Proposition~\ref{prop_39} to categories $\TT$ and $D^b_0(\k Q)$ with vertex-like collections $\{t_i\}$ and $\{s_i\}$. By~\eqref{eq_TD}, both $\TT$ and $D^b_0(\k Q)$ are equivalent to the same category $\Perf(\EE)$, and for any $i\in I$ the objects $t_i\in \TT$ and $s_i\in D^b_0(\k Q)$ are sent to the same object in $\Perf(\EE)$. Hence the statement follows.

If $\TT$ is not known to be idempotent-complete, the same arguments prove that there is a fully faithful exact functor $\Phi\colon \TT\to D^b_0(\k Q)$, sending $t_i$ to $s_i, i\in I$. But objects $s_i$ generate $D^b_0(\k Q)$ as a triangulated category (without adding direct summands). Therefore the essential image of $\Phi$ is all $D^b_0(\k Q)$, and $\Phi$ is an equivalence.
\end{proof}
\begin{remark}
Theorem~\ref{th_39} was first proved in \cite[Proposition 3.9]{EL} under extra conditions:~$I$ was assumed finite and the dg endomorphism algebra of $\oplus t_i$ was assumed formal. 
\end{remark}    

We can say even more if a vertex-like set in an abelian category is given.

\begin{corollary}
\label{cor_40}
Let $\AA$ be a $\k$-linear abelian category. 
Let $\TT \subset  D^b(\AA)$ be a thick subcategory and $\SS=\TT\cap \AA$. Assume there is a vertex-like set $t_i, i\in I$, in $\SS$ that generates $\TT$ as a thick triangulated subcategory. Let $Q$ be the associated $\Ext$-quiver. Then 
\begin{enumerate}
\item There are equivalences
$$D^b_0(\k Q)\xra{\sim} \TT\quad\text{and}\quad \moddo \k Q\xra{\sim} \SS,$$
sending $s_i$ to $t_i$. 
\item Objects $t_i$ represent isomorphism classes of simple objects in $\SS$.
\item $\SS$ is the smallest subcategory in~$\AA$ that contains $t_i$-s and is closed under extensions.
\item The natural functor $D^b(\SS)\to D^b(\AA)$ is fully faithful with the essential image $\TT$. 
\end{enumerate}
\end{corollary}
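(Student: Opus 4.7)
The plan is to deduce everything from Theorem~\ref{th_39} applied to $\TT$ itself. The subcategory $\TT\subseteq D^b(\AA)$ is algebraic as a full triangulated subcategory of the algebraic category $D^b(\AA)$, it is idempotent-complete as it is thick in $D^b(\AA)$, and by hypothesis it is triangulated-generated by the vertex-like set $\{t_i\}_{i\in I}$. Theorem~\ref{th_39} therefore produces an exact equivalence $\Phi\colon D^b_0(\k Q)\xra{\sim}\TT$ with $\Phi(s_i)=t_i$, giving the first equivalence in~(1).

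To obtain the second equivalence in~(1), I would verify that $\Phi$ restricts to an equivalence $\moddo\k Q\xra{\sim}\SS$. The inclusion $\Phi(\moddo\k Q)\subseteq\SS$ is immediate: $\moddo\k Q$ is the extension closure of $\{s_i\}$ inside itself, each $t_i=\Phi(s_i)$ lies in $\AA$, and $\AA$ is closed under extensions inside $D^b(\AA)$, so $\Phi(\moddo\k Q)\subseteq\AA\cap\TT=\SS$. For the reverse inclusion, let $F\in\SS$. Since $\moddo\k Q$ is hereditary (Proposition~\ref{prop_moddo}), we have $\Phi^{-1}(F)\cong\bigoplus_k M_k[-k]$ in $D^b_0(\k Q)$ with $M_k\in\moddo\k Q$. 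Applying $\Phi$ yields $F\cong\bigoplus_k \Phi(M_k)[-k]$ in $D^b(\AA)$ with each $\Phi(M_k)\in\SS\subseteq\AA$; but $F\in\AA$, so only the summand with $k=0$ can survive, forcing $\Phi^{-1}(F)=M_0\in\moddo\k Q$.

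Parts~(2) and~(3) then follow formally. The simple objects of $\moddo\k Q$ are precisely the $s_i$, so via $\Phi$ the $t_i$ represent the isomorphism classes of simples in $\SS$. And $\moddo\k Q$ is the extension closure of $\{s_i\}$ inside itself, which under $\Phi$ says that $\SS$ is the extension closure of $\{t_i\}$ inside $\SS$; since $\SS$ is itself closed under extensions in $\AA$, this coincides with the extension closure of $\{t_i\}$ taken in $\AA$. For~(4), composing the equivalences of~(1) with $D^b(\moddo\k Q)\xra{\sim} D^b_0(\k Q)$ from Proposition~\ref{prop_moddo} yields an exact equivalence $D^b(\SS)\xra{\sim}\TT\hookrightarrow D^b(\AA)$, and it remains to identify this composition with the natural derived functor of the inclusion $\SS\hookrightarrow\AA$.

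The main obstacle I anticipate is precisely this last identification in~(4). A clean route is a d\'evissage: because $\SS$ is hereditary, every object of $D^b(\SS)$ splits as a direct sum of shifts of objects of $\SS$, so fully faithfulness of the natural functor $D^b(\SS)\to D^b(\AA)$ reduces to the comparison $\Ext^p_\SS(M,N)\xra{\sim}\Hom^p_{D^b(\AA)}(M,N)$ for $M,N\in\SS$. Induction on length via the long exact $\Ext$-sequences reduces this further to the case $M=t_i$, $N=t_j$, where the vertex-like hypothesis computes the right-hand side, hereditariness of $\SS$ kills the left-hand side for $p\ge 2$, and the closure of $\SS$ under extensions in $\AA$ gives agreement for $p=0,1$. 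The essential image, being triangulated and containing $\{t_i\}$, then equals $\TT$.
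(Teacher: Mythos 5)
Your proposal is correct and follows essentially the same route as the paper: apply Theorem~\ref{th_39} to the algebraic idempotent-complete category $\TT$, use the splitting of objects of $D^b(\moddo\k Q)$ into shifted cohomology to show the induced functor restricts to an equivalence $\moddo\k Q\xra{\sim}\SS$, and establish (4) by d\'evissage down to the generators. The only cosmetic difference is in (4), where the paper compares the derived functor of the inclusion with the already-constructed equivalence on the objects $s_i[d]$, while you verify the $\Ext^p$-comparison on simples directly; both are the same d\'evissage.
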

\begin{proof}
    Note that $\TT$ is an algebraic triangulated category, so we can apply Theorem~\ref{th_39}. We get that there is an exact equivalence $\TT\to D^b_0(\k Q)$. Recall equivalence $D^b(\moddo \k Q)\to D^b_0(\k Q)$ from  Proposition~\ref{prop_moddo}. Let
    $$\Phi\colon D^b(\moddo \k Q)\to \TT$$
    be the resulting equivalence. Note that $\Phi$ sends simple modules   $s_i$ to objects $t_i$. Any module  in $\moddo \k Q$ is an iterated extension of $s_i$-s, hence $\Phi$ sends the subcategory   $\moddo \k Q\subset D^b(\moddo \k Q)$ to $\SS\subset \TT$.  Let us check that the restriction 
    $$\phi=\Phi|_{\moddo\k Q}\colon \moddo\k Q\to\SS$$ is essentially surjective. 
    Recall that any  object in $D^b(\moddo \k Q)$ is the direct sum of it cohomology. Let $F\in \SS$ and $E=\Phi^{-1}(F)$. Then
    $$F\cong\Phi(E)\cong \Phi(\oplus_i H^i(E)[-i])\cong \oplus_i \phi(H^i(E))[-i].$$
    It follows that $\phi(H^i(E))=0$ and $H^i(E)=0$ for $i\ne 0$. Hence $E\cong H^0(E)\in \moddo\k Q$.
    Consequently, $\Phi$ restricts to an equivalence $\phi\colon \moddo \k Q\to\SS$. 

    (2) and (3) follow easily from equivalence $ \moddo \k Q\to\SS$. 
    
    For (4) it suffices to check that the derived functor 
    $$\Phi':=D(\phi)\colon D^b(\moddo\k Q)\to D^b(\AA)$$
    is fully faithful with the image $\TT$. While we are unsure if $\Phi'$ is isomorphic to $\Phi$, we note that 
    $\Phi'$ and $\Phi$ coincide on objects $s_i[d]$, where $i\in I, d\in\Z$. It follows that $\Phi'$ is fully faithful on such objects along with $\Phi$, and by standard d{\'e}vissage technique $\Phi'$ is fully faithful on its domain. Finally, we see that the image of $\Phi'$ is generated by objects $t_i[d]$ and therefore is $\TT$.
\end{proof}

We find it convenient for applications  to formulate another consequence of Theorem~\ref{th_39}.
Note that Corollary~\ref{cor_39} appears in~\cite[Proposition 3.10]{EL} with extra assumptions including finiteness. 
\begin{corollary}
\label{cor_39}
Assume that the field $\k$ is algebraically closed. Let $\AA$ be a $\k$-linear abelian hereditary
 category. Let $\TT \subset  D^b(\AA)$ be a $\Hom$-finite thick essentially small subcategory, and $\SS=\TT\cap \AA$. Assume there exists a linear function
$$r \colon K_0(\SS) \to \Z,$$
such that for any non-zero $F \in \SS$ one has $r([F]) > 0$. Then the category $\SS$ is  of finite length, its simple objects form  a vertex-like collection generating $\TT$, and categories $\SS$ and $\TT$ are quiver-like.
\end{corollary}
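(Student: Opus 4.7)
My plan is to reduce the statement to Corollary~\ref{cor_40} by exhibiting representatives of the isomorphism classes of simple objects of $\SS$ as a vertex-like collection generating $\TT$. The first step is to show that $\SS$ is a finite length category. Since $\AA$ is hereditary and $\TT$ is thick in $D^b(\AA)$, Proposition~\ref{prop_hersubcat2} identifies $\SS = \TT \cap \AA$ with a thick (hence wide) subcategory of $\AA$, and by Proposition~\ref{prop_hersubcat1}(2) it is itself a hereditary abelian category with exact inclusion into $\AA$. In particular kernels, cokernels, and short exact sequences formed in $\SS$ agree with those in $\AA$, so $r$ is additive on exact triples in $\SS$. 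For any strict filtration $0 = F_0 \subsetneq F_1 \subsetneq \cdots \subsetneq F_n = F$ in $\SS$, each quotient $F_i/F_{i-1}$ lies in $\SS$ and is non-zero, hence $r([F_i/F_{i-1}]) \ge 1$ and summing gives $n \le r([F])$. Thus every ascending or descending chain in $\SS$ has bounded length, so $\SS$ is of finite length, and essential smallness of $\TT$ lets us choose a set $\{S_i\}_{i\in I}$ of representatives for the isomorphism classes of simple objects in $\SS$.

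Next I would verify that $\{S_i\}_{i\in I}$ is a vertex-like family in $\TT$, where morphisms are computed in $D^b(\AA)$ and equal the $\Ext$-groups in $\AA$. For simples $S_i, S_j$ in $\SS$, any morphism $S_i \to S_j$ in $\AA$ is a morphism in $\SS$ (full inclusion), and its kernel, image, and cokernel all lie in $\SS$ by wide-ness; Schur's lemma applied inside $\SS$ therefore gives $\Hom_\AA(S_i, S_j) = 0$ for $i \ne j$, and makes $\End_\AA(S_i)$ a division algebra, which by $\Hom$-finiteness of $\TT$ and algebraic closedness of $\k$ must be $\k$. Hereditariness of $\AA$ gives $\Ext^p_\AA(S_i, S_j) = 0$ for $p \ge 2$, and negative degrees vanish since $S_i, S_j \in \AA$. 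All the vertex-like conditions are satisfied.

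Finally, because $\SS$ is of finite length, every object of $\SS$ is an iterated extension of the $S_i$, so the thick triangulated subcategory of $D^b(\AA)$ generated by $\{S_i\}$ contains $\SS$. By the bijection in Proposition~\ref{prop_hersubcat2}, $\TT$ is the thick subcategory of $D^b(\AA)$ generated by $\SS$ itself, hence also by $\{S_i\}$. Corollary~\ref{cor_40} now applies and yields that both $\SS$ and $\TT$ are quiver-like. The main subtlety to handle with care is that a priori $\SS$ need not be a Serre subcategory of $\AA$, so simples of $\SS$ need not be simples of the ambient $\AA$ and can carry proper $\AA$-subobjects that do not lie in $\SS$; every use of Schur's lemma must therefore be performed inside $\SS$, exploiting wide-ness to keep kernels, cokernels, and images in $\SS$, while hereditariness of $\AA$ is what guarantees that higher $\Ext$'s vanish in the sense required by the vertex-like condition.
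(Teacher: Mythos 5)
Your proposal is correct and follows essentially the same route as the paper: use the positive additive function $r$ to bound composition series and conclude that $\SS$ has finite length, check that representatives of the simple objects of $\SS$ form a vertex-like family generating $\TT$ (with hereditariness killing $\Ext^{\ge 2}$, Schur's lemma plus $\Hom$-finiteness and algebraic closedness giving $\End=\k$), and invoke Corollary~\ref{cor_40}. Your explicit care that Schur's lemma must be applied inside the wide subcategory $\SS$ rather than in $\AA$ is a point the paper leaves implicit, but the argument is the same.
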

\begin{proof}
By Proposition~\ref{prop_hersubcat1} $\SS$ is an abelian hereditary category. 
One can prove that any object $F$ in $\SS$ has finite length by induction in $r([F])$. Indeed, if $r([F])=0$ then $F=0$ and there is nothing to prove, and if $0\to F'\to F\to F''\to 0$ is a non-trivial exact sequence, then $r([F])=r([F'])+r([F''])$, $r([F']),r([F''])<r([F])$, and one can proceed by induction.

Let $t_i, i\in I$, denote the set of (isomorphism classes of) simple objects in $\SS$. Then  $t_i$-s generate $\SS$ (because  any object in $\SS$ is an iterated extension of simples) and $\TT$ (because any object in $\TT$ is a direct sum of its cohomology objects, which are in $\SS$). Further, $t_i$-s form a vertex-like family. Indeed, $\Hom^p_{\TT}(t_i,t_j)=\Hom^p_\AA(t_i,t_j)=0$ for $p\ne 0,1$ since  $\AA$ is hereditary, $\Hom(t_i,t_j)=0$ for $i\ne j$ because they are simple objects. Finally, $\End(t_i)$ is a finite-dimensional division $\k$-algebra (by Schur's lemma and our assumptions). Since $\k$ is algebraically closed,  $\End(t_i)$ is isomorphic to~$\k$. Now by  Corollary~\ref{cor_40}  $\SS$ and $\TT$ are quiver-like. 
\end{proof}

\medskip
Now we provide an analogue of Theorem~\ref{th_39}, giving a characterisation of abelian quiver-like categories.
\begin{theorem}
\label{th_39ab}
    An abelian $\k$-linear category $\AA$ is quiver-like if and only if $\AA$ is essentially small, has finite length,  $\End(S)= \k$ for any simple object $S\in\AA$, and  
    $\Ext^j(S_1,S_2)=0$ for $j\ge 2$ and for any  simple objects $S_1,S_2\in\AA$.
\end{theorem}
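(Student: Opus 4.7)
For the ``only if'' direction, assume $\AA \cong \moddo \k Q$. Essential smallness is clear. Finite length and the description of simple objects as the modules $s_i$, $i\in Q_0$, is immediate from the definition of $\moddo\k Q$. The vanishing of $\End(s_i)/\k$ and of $\Ext^{\ge 2}(s_i,s_j)$ follow from Lemma~\ref{lemma_comput} (which, combined with Proposition~\ref{prop_moddo}, shows that $\moddo\k Q$ is hereditary).

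For the ``if'' direction, I would extract from the hypotheses a vertex-like collection and then invoke Corollary~\ref{cor_40}. Since $\AA$ is essentially small, the isomorphism classes of simple objects form a set; let $\{t_i\}_{i\in I}$ be a set of representatives. I claim that $\{t_i\}$ is vertex-like in $D^b(\AA)$. Indeed, $\Hom(t_i,t_i)=\End(t_i)=\k$ by assumption; $\Hom(t_i,t_j)=0$ for $i\ne j$ follows from Schur's lemma since simplicity of $t_i,t_j$ forces any non-zero morphism to be an isomorphism; $\Hom^p(t_i,t_j)=\Ext^p_\AA(t_i,t_j)$ vanishes for $p<0$ automatically and for $p\ge 2$ by the $\Ext$-vanishing hypothesis. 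So the $t_i$ form a vertex-like family.

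Let $\TT:=\langle t_i \mid i\in I\rangle\subset D^b(\AA)$ be the thick triangulated subcategory generated by the $t_i$; it is algebraic (as a subcategory of the algebraic category $D^b(\AA)$) and idempotent-complete (being thick). Setting $\SS:=\TT\cap \AA$ and letting $Q$ denote the $\Ext$-quiver of $\{t_i\}$, Corollary~\ref{cor_40} produces equivalences
$$D^b_0(\k Q)\xra{\sim}\TT, \qquad \moddo\k Q\xra{\sim}\SS,$$
sending $s_i\mapsto t_i$. It remains to verify that $\SS=\AA$. The inclusion $\SS\subset\AA$ is tautological. Conversely, for any $F\in\AA$, finite length furnishes a composition series $0=F_0\subset F_1\subset\ldots\subset F_n=F$ with each quotient $F_k/F_{k-1}$ isomorphic to some $t_{i_k}$. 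Each exact sequence $0\to F_{k-1}\to F_k\to t_{i_k}\to 0$ gives an exact triangle in $D^b(\AA)$, so by induction on $k$ each $F_k$ lies in $\TT$, hence $F\in\TT\cap\AA=\SS$. Thus $\AA=\SS\cong \moddo\k Q$, as required.

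The main conceptual work is already encapsulated in Theorem~\ref{th_39} and Corollary~\ref{cor_40}; the present theorem is the abelian shadow of those statements, and the only genuinely new ingredient is the elementary observation that finite length plus the extension-closure part of Corollary~\ref{cor_40}(3) forces $\SS=\AA$. The subtlest point to check carefully is just that the set-theoretic hypothesis of essential smallness is what allows us to form the index set $I$ of simples and thereby apply the machinery of Section~\ref{section_QL}.
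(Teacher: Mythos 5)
Your proposal is correct and follows essentially the same route as the paper: identify the simple objects as a vertex-like family in $D^b(\AA)$ and invoke Corollary~\ref{cor_40}. The only cosmetic difference is that the paper uses the finite-length filtration up front to see that the simples generate all of $D^b(\AA)$ (so one may take $\TT=D^b(\AA)$ directly), whereas you first form $\TT=\langle t_i\rangle$ and then use the same filtration argument to conclude $\SS=\AA$.
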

\begin{proof}
    ``Only if'' part is trivial, let's prove  ``if'' part. Consider $D^b(\AA)$, it is an algebraic triangulated category. Choose a representative set $t_i, i\in I,$ for isomorphism classes of simple objects in $\AA$.  Then $\{t_i\}_{i\in I}$ is a vertex-like set by our assumptions. Note that this set generates $D^b(\AA)$ as a triangulated category: any object in $\AA$ has a finite filtration with quotients isomorphic to some $t_i$, hence belongs to the triangulated subcategory in $D^b(\AA)$ generated by $t_i$-s.  
    So we can apply Corollary~\ref{cor_40} with $\TT=D^b(\AA)$ and deduce that $\AA$ is quiver-like.
\end{proof}

\begin{remark}
Note that we do not require $\AA$ to be hereditary in Corollary~\ref{cor_40} and Theorem~\ref{th_39ab}.
\end{remark}

\section{Linear quivers, tubes, and their subcategories}
\label{section_linestubes}
Here we collect necessary facts about two families of abelian hereditary categories: modules over linear quivers and tubes, needed for the sequel.
We refer to \cite[1.7, 1.8]{ChenKrause}, \cite[Section 4]{Cheng}, \cite{Krause_strings}, or \cite{Di} for details. 

\subsection{Linear quivers}
For $n\ge 1$, let $\AA_n$ be the category of right representation of the quiver of type $A_n$:
$$\xymatrix{\bul_1\ar[rr] && \bul_2\ar[rr] && \bul\ar[r] & \ldots \ar[r]& \bul\ar[rr] && \bul_n} $$
For $n=0$ we put $\AA_0=0$. Categories $\AA_n$ are connected, hereditary, and uniserial. They have finitely many indecomposable objects, all off which are exceptional. Despite of their omnipresence,  categories $\AA_n$ do not seem to have a standard name. Thick subcategories in $\AA_n$ and $D^b(\AA_n)$ are  well-understood, we summarize relevant facts in 
\begin{prop}[See~{\cite{Di}, \cite{Krause_strings}}]
\label{prop_thicklines}
    Let $n\ge 1$.
    \begin{enumerate}
    \item Let $E\in \AA_n$ be an indecomposable object of length $m, 1\le m\le n$. Then $E^\perp_{\Ab}$  is equivalent to $\AA_{n-m}\times \AA_{m-1}$, and
        $D^b(\AA_n)=\langle D^b(E^\perp_{\Ab}), \langle E\rangle\rangle$.
    \end{enumerate}
    Let $\TT\subset D^b(\AA_n)$ be a thick subcategory, and  $\SS=\TT\cap \AA_n$. Then
    \begin{enumerate}
    \item[(2)] $\TT$ is generated by an exceptional collection in $\SS$, in particular, $\TT$ is admissible.
    \item[(3)] One has $$\SS\cong \AA_{n_1}\times\ldots\times \AA_{n_k},$$ where $k\ge 0$ and $n_i\ge 1$.
    \end{enumerate}
\end{prop}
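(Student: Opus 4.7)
The plan for (1) is to write $E = M_{[i, i+m-1]}$ as an interval module, observe that all indecomposables of the hereditary category $\AA_n$ are exceptional, and apply the resulting semiorthogonal decomposition $D^b(\AA_n) = \langle E^\perp, \langle E\rangle\rangle$; Proposition~\ref{prop_hersubcat2} then realises $E^\perp$ as $D^b(E^\perp_{\Ab})$. To identify $E^\perp_{\Ab}$ with $\AA_{n-m} \times \AA_{m-1}$, I will exhibit an explicit family of $n-1$ objects in $E^\perp_{\Ab}$: the simples $s_j$ for $j \notin \{i, i+m\}$ together with $M_{[i, i+m]}$ when $i+m \le n$. Direct computation with the standard formulas for $\Hom$ and $\Ext^1$ between interval modules shows this family lies in $E^\perp_{\Ab}$, is vertex-like, and has $\Ext$-quiver the disjoint union $A_{n-m} \sqcup A_{m-1}$, the first component being the chain $s_1, s_2, \ldots, s_{i-1}, M_{[i, i+m]}, s_{i+m+1}, \ldots, s_n$ and the second being $s_{i+1}, \ldots, s_{i+m-1}$. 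Corollary~\ref{cor_40} then yields the required equivalence.

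To upgrade this to the equality of the generated triangulated subcategory with all of $E^\perp$, I will check that our $n-1$ objects together with $E$ generate $D^b(\AA_n)$. This follows from the short exact sequences $0 \to M_{[i+1, i+m-1]} \to E \to s_i \to 0$ and $0 \to s_{i+m} \to M_{[i, i+m]} \to E \to 0$, which place the two ``missing'' simples $s_i$ and $s_{i+m}$ in the triangulated subcategory generated by our family, and hence every simple $s_j$ — and so all of $D^b(\AA_n)$ — is generated.

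For (2) I will induct on $n$. The case $\TT = 0$ is trivial; otherwise $\TT$ contains a shift of some indecomposable $E \in \AA_n$, to which I apply (1) and obtain $D^b(\AA_n) = \langle E^\perp, \langle E\rangle\rangle$ with $E^\perp \cong D^b(\AA_{n-m}) \times D^b(\AA_{m-1})$. Since $\TT \supset \langle E\rangle$, the standard projection argument gives $\TT = \langle \TT \cap E^\perp, \langle E\rangle\rangle$, and any thick subcategory of the product decomposes as a product of thick subcategories of the factors via the idempotents $(1, 0)$ and $(0, 1)$. Because $n-m, m-1 < n$, the induction hypothesis furnishes exceptional collections generating each factor; concatenating them with $E$ at the right produces the required exceptional collection in $\TT$. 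Admissibility follows at once, as subcategories generated by exceptional collections in a proper triangulated category are admissible.

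For (3), $\SS$ is a wide hereditary subcategory of $\AA_n$ by Propositions~\ref{prop_hersubcat1} and~\ref{prop_hersubcat2}, and every indecomposable of $\SS$ is an interval module of $\AA_n$ — in particular $\SS$ has only finitely many indecomposables, all of them thin and exceptional. The simples of $\SS$ therefore form a vertex-like collection generating $\TT$, and Corollary~\ref{cor_40} gives $\SS \cong \moddo\k Q$ for the finite $\Ext$-quiver $Q$ of these simples. Since $\SS$ is representation-finite, $Q$ must be acyclic — a cycle in $Q$ would force $\moddo \k Q$ to contain infinitely many indecomposables — so $\moddo \k Q = \modd \k Q$, and Gabriel's theorem identifies $Q$ as a disjoint union of Dynkin quivers. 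Only Dynkin type $A$ has all indecomposable representations thin, while all indecomposables of $\SS$ are forced to be thin as interval modules, so $Q$ is a disjoint union of $A_{n_i}$-type quivers, as desired. The main obstacle I anticipate is the combinatorial bookkeeping in (1), namely the explicit $\Hom$- and $\Ext^1$-computations establishing that the proposed $n-1$ objects form a vertex-like collection with the claimed $\Ext$-quiver; once (1) is in hand, (2) and (3) assemble straightforwardly from induction and classical quiver-representation theory.
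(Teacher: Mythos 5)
Your overall strategy for (1) and (2) matches the paper's: exhibit an explicit vertex-like family generating $E^\perp$, apply Corollary~\ref{cor_40}, and then run the induction for (2) via the semi-orthogonal decomposition. Two issues, one cosmetic and one substantive.

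The cosmetic one is in (1): under the paper's conventions (arrows $1\to 2\to\cdots\to n$, right representations, so by Lemma~\ref{lemma_comput} $\Ext^1(s_{j+1},s_j)\ne 0$ and the interval module $M_{[p,q]}$ has socle $s_p$ and top $s_q$), your proposed family lies in ${}^\perp E$ rather than $E^\perp$. Indeed $E=M_{[i,i+m-1]}$ is a \emph{submodule} of $M_{[i,i+m]}$, so $\Hom(E,M_{[i,i+m]})\ne 0$; the correct family for the right orthogonal extends $E$ one step to the \emph{left} and omits the simples $s_{i-1}$ and $s_{i+m-1}$ (this is the family~\eqref{eq_XXX} in the paper). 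Your version is internally consistent with the opposite orientation, and since $A_n$ is isomorphic to its reverse the abstract conclusion is unaffected, but the ``direct computation'' you defer would force you to flip the family.

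The substantive gap is in (3), where you depart from the paper (which simply iterates (1) along the exceptional collection ${}^\perp\TT$ produced by (2)). Thinness of all indecomposables of $\SS$ together with Gabriel's theorem rules out Dynkin types $D$ and $E$, but it does \emph{not} pin down the orientation of the type-$A$ components of the $\Ext$-quiver $Q$: for instance every indecomposable representation of $1\to 2\leftarrow 3$ is thin, yet $\modd\k(1\to 2\leftarrow 3)$ is not equivalent to any product $\AA_{n_1}\times\cdots\times\AA_{n_k}$ (the quiver is recovered from the module category as the $\Ext$-quiver of the simples, cf.\ Remark~\ref{rem_uniquemoddo}). So as written your argument only yields ``disjoint union of type-$A$ quivers with some orientations.'' To close the gap you must show each vertex of $Q$ has at most one incoming and at most one outgoing arrow; this does follow from the combinatorics of interval modules --- if $\Ext^1(T,T_1)$ and $\Ext^1(T,T_3)$ are both nonzero then $T_1,T_3$ are intervals abutting $T$ on the same side, whence one is a quotient of the other and $\Hom(T_1,T_3)\ne 0$ or $\Hom(T_3,T_1)\ne 0$, contradicting that they are non-isomorphic simples of $\SS$ --- but this step is missing. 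Alternatively, deduce (3) exactly as the paper does, by applying (1) repeatedly to the exceptional collection generating ${}^\perp\TT$, which produces linearly oriented components automatically.
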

\begin{proof}
    We provide a short proof to illustrate methods from Section~\ref{section_QL}. 
    
    For $1\le i\le j\le n$, denote by $M_{ij}$ the indecomposable right representation of $A_n$, concentrated in vertices $i,i+1,\ldots,j-1,j$. We can assume $E=M_{pq}$ with $m=q-p+1$. 
    One can check that $E^\perp$
    is generated by the vertex-like family of modules
    \begin{equation}
    \label{eq_XXX}
    M_{11},M_{22},\ldots,M_{p-2,p-2}, M_{p-1,q},M_{q+1,q+1},\ldots,M_{nn}\: ; M_{pp},M_{p+1,p+1},M_{q-1,q-1}    
    \end{equation}
    Note that the $\Ext$-quiver of~\eqref{eq_XXX} is $A_{n-m}\sqcup A_{m-1}$. By Corollary~\ref{cor_40}, we get an equivalence 
    $$E^\perp_{\Ab}\cong \moddo \k (A_{n-m}\sqcup A_{m-1})\cong \AA_{n-m}\times \AA_{m-1}.$$
    By Proposition~\ref{prop_hersubcat2} $E^\perp\cong D^b(E^\perp_{\Ab})$, this proves (1).

    (2) is clear since any thick subcategory in $\AA_n$ contains an indecomposbale object, which is necessarily   exceptional. (3) follows from (1) by induction, because $\TT$ is the orthogonal to an exceptional collection.
\end{proof}

\subsection{Tubes}
\label{section_tubes}
Let $n\ge 1$, let $Z_n$ be the oriented cycle with $n$ vertices: 
\begin{equation*}
\xymatrix{\bul_1\ar[rr] && \bul_2\ar[rr] && \bul\ar[r] & \ldots \ar[r]& \bul\ar[rr] && \bul_n\ar@(dl,dr)[llllllll] \\
&&&&&&&&}
\end{equation*}
Let
$\UU_n:=\moddo \k Z_n$.  It is a connected abelian uniserial hereditary category. Moreover, $\UU_n$ has Serre duality in the form 
\begin{equation}
\label{eq_serreU}
\Ext^1(X,Y)^*\cong \Hom(Y,\tau X).
\end{equation}
where  $X,Y\in \UU_n$, and $\tau\colon \UU_n\to \UU_n$ is the autoequivalence induced by the rotation  of $Z_n$ against the arrows.  Note that $\tau^n=\id$. The derived category $D^b(\UU_n)$ has a Serre functor, given by $\tau[1]$.
Let $S$ be some simple object in $\UU_n$, then all simple objects are $S,\tau S,\ldots, \tau^{n-1}S$. Further, for a simple object $S$ and $i\ge 1$, denote by $S^{[i]}$ the unique  indecomposable object $M$ with $M/\mathrm{rad}(M)\cong S$ and of length $i$. Such  $S^{[i]}$ fits into a non-trivial extension 
$$0\to \tau S^{[i-1]}\to S^{[i]}\to S\to 0.$$
Then all indecomposable objects of length $i$ in $\UU_n$ are $S^{[i]}, \tau S^{[i]}, \ldots, \tau^{n-1}S^{[i]}$. In particular, there are infinitely many indecomposable objects in $\UU_n$.
We will call $\UU_n$ a \emph{tube of rank $n$}.

Object $S^{[i]}$ is exceptional if and only if $i<n$, and is sphere-like  if and only if $i=n$. 
For $i>n$ the object $S^{[i]}$ is not exceptional nor sphere-like, it has nilpotent endomorphisms.

One can show that any connected abelian category of finite length with Serre duality \eqref{eq_serreU} and with finitely many simples is equivalent to $\UU_n$ for some $n\ge 1$, see~\cite{RvdB}.

\medskip
Thick subcategories in tubes have  also been studied, see~\cite{Krause_strings}, \cite{Cheng}, or \cite{Di}.
Here we collect the results from these sources that we will use later.

\begin{prop}
\label{prop_thicktubes}
    Let $n\ge 1$. 
    \begin{enumerate}
        \item Let $E\in \UU_n$ be an indecomposable object of length $m, 1\le m\le n$. Then $E^\perp_{\mathrm Ab}$ is equivalent to $\UU_{n-m}\times \AA_{m-1}$, 
         and we have  the semi-orthogonal decomposition
        $D^b(\UU_n)=\langle D^b(E^\perp_{\Ab}), \langle E\rangle\rangle$ (where $\UU_{n-m}$ does not appear if $m=n$).
    \end{enumerate}
    Let $\TT\subset D^b(\UU_n)$ be a thick subcategory, and  $\SS=\TT\cap \UU_n$. Then 
    \begin{enumerate}
        \item[(2)] $\TT$ is admissible and exactly one of the following holds:
        \begin{enumerate}
            \item If  $\TT$ contains no sphere-like objects then $\TT$ is generated by an exceptional collection in $\SS$. 
            \item If $\TT$ contains a sphere-like object then   $\TT$ is generated by a semi-orthogonal collection $E_1,\ldots,E_m$ in $\SS$, where $E_1,\ldots,E_{m-1}$ are exceptional and $E_m$ is sphere-like. 
        \end{enumerate}
         
        \item[(3)] According to (2), one has:
        \begin{enumerate}
            \item 
            \begin{equation}
            \label{eq_AAAA}
                \SS\cong \AA_{n_1}\times\ldots\times \AA_{n_k},
            \end{equation} where $k\ge 0$, $n_i\ge 1$. All indecomposable objects in $\TT$ are exceptional. 
            \item 
            \begin{equation}
                \label{eq_UAAA}
                \SS\cong \UU_s\times \AA_{n_1}\times\ldots\times \AA_{n_k},
            \end{equation}where   $s\ge 1$, $k\ge 0$, $n_i\ge 1$.
             $\TT$ is not generated by an exceptional collection.  
        \end{enumerate}
        Moreover, $\TT$ is of type (a)  if and only if $\TT^\perp$ (and $^\perp\TT$) is of type (b).
    \end{enumerate}
\end{prop}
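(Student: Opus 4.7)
My plan is to follow the strategy of Proposition~\ref{prop_thicklines}, adapted to the cyclic setting, and then leverage (1) inductively to deduce (2) and (3).

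For (1), I would reduce via the $\tau$-action to $E = S^{[m]}$ for a fixed simple $S$, and exhibit a vertex-like family of $n-1$ objects in $E^\perp_{\Ab}$: the simples $\tau S, \tau^2 S, \ldots, \tau^{m-1} S$ (the composition factors of $E$ below its top), plus, when $m < n$, the simples $\tau^{m+1} S, \ldots, \tau^{n-1} S$ together with the bridge $S^{[m+1]}$. A short calculation using Serre duality~\eqref{eq_serreU} and the description of submodules of uniserial objects verifies membership in $E^\perp_{\Ab}$ and shows that the $\Ext$-quiver of this family is $A_{m-1} \sqcup Z_{n-m}$: the bridge, having top $S$ and socle $\tau^m S$, supplies the two arrows that close up $Z_{n-m}$. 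Corollary~\ref{cor_40} then yields $E^\perp_{\Ab} \cong \UU_{n-m} \times \AA_{m-1}$. The semi-orthogonal decomposition $D^b(\UU_n) = \langle D^b(E^\perp_{\Ab}), \langle E\rangle\rangle$ follows since $\UU_n$ is hereditary (so Proposition~\ref{prop_hersubcat2} gives $D^b(E^\perp_{\Ab}) \subseteq E^\perp$) and every object of $\UU_n$ is an iterated extension of $E$ and objects of $E^\perp_{\Ab}$.

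For (2) and (3) I would argue by induction on $n$, the base $n=1$ being trivial. First I would establish the trichotomy: an indecomposable $\tau^j S^{[i]} \in \UU_n$ is exceptional iff $i<n$, sphere-like iff $i=n$, and has non-scalar nilpotent endomorphisms iff $i>n$. In the last case a generator of the nilpotent radical of $\End(\tau^j S^{[i]})$ produces (via the cone in $D^b(\UU_n)$ and hereditariness) a direct sum of shifted, strictly shorter indecomposable summands; iterating shows that any thick subcategory containing an indecomposable of length $>n$ contains a sphere-like of length $n$. Hence if $\TT$ has no sphere-like, every indecomposable of $\SS$ is exceptional. In that case, I pick any indecomposable $E_1 \in \SS$ and apply (1) to get $\TT \cap E_1^\perp$ as a thick subcategory of $D^b(\UU_{n-m_1}) \times D^b(\AA_{m_1-1})$; a thick subcategory of a direct product is itself a product (by closure under direct summands), and sphere-likeness is preserved under the exact inclusion $E_1^\perp \cap \UU_n \hookrightarrow \UU_n$ (by hereditariness), so the induction hypothesis together with Proposition~\ref{prop_thicklines} yields (2a) and (3a). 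If instead $\TT$ contains a sphere-like $E_m$, then (1) with $m = n$ gives $\TT \cap E_m^\perp \subset D^b(\AA_{n-1})$, and Proposition~\ref{prop_thicklines} produces the exceptional collection $E_1, \ldots, E_{m-1}$. The abelian statement (3b) follows by applying Corollary~\ref{cor_40} to the simple objects of $\SS$ — a vertex-like family generating $\TT$ — whose $\Ext$-quiver has the shape $Z_s \sqcup A_{n_1} \sqcup \ldots \sqcup A_{n_k}$, with the cycle produced by $E_m$'s self-extension together with the simples of $\SS$ absorbed into its cyclic structure.

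The main obstacle I anticipate is the final dichotomy, that $\TT$ is of type~(a) iff $\TT^\perp$ and $^\perp\TT$ are of type~(b). One direction is easy: if $\TT$ contains a sphere-like, then $\TT^\perp$ cannot, since any two sphere-like objects $E, E' \in \UU_n$ satisfy $\Hom(E, E') \ne 0$ (the cyclic composition series forces a nonzero morphism factoring through the top of $E$). For the converse I would decompose a fixed sphere-like $S^{[n]}$ via $\langle \TT^\perp, \TT\rangle$ and combine the rank count $K_0(D^b(\UU_n)) = K_0(\TT) \oplus K_0(\TT^\perp) \cong \Z^n$ with the fact that $[S^{[n]}] = \sum_k [\tau^k S]$ spans the one-dimensional radical of the Euler form on $\UU_n$: since classes of exceptional indecomposables generating type-(a) $K_0(\TT)$ cannot capture this radical direction, the complement $K_0(\TT^\perp)$ must, forcing $\TT^\perp$ to contain an actual sphere-like object.
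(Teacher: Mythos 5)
Your treatment of (1) coincides with the paper's: the same vertex-like family $\tau S,\ldots,\tau^{m-1}S$; $\tau^{m+1}S,\ldots,\tau^{n-1}S, S^{[m+1]}$ with $\Ext$-quiver $A_{m-1}\sqcup Z_{n-m}$, fed into Corollary~\ref{cor_40}. Your derivation of (2) (induction via the orthogonal of a simple-in-$\SS$ object, and $E^\perp\cong D^b(\AA_{n-1})$ for a sphere-like $E$ to ensure at most one sphere-like member) is also the paper's argument. Your $K_0$ argument for the final dichotomy is a genuinely different and workable route: if both $\TT$ and $\TT^\perp$ were generated by exceptional collections, the Euler form on $K_0(D^b(\UU_n))$ would be block-triangular with unimodular diagonal blocks, hence nondegenerate, contradicting the fact that $[S^{[n]}]=\sum_k[\tau^kS]$ lies in its radical; the paper instead extracts the dichotomy from its case analysis in (3).

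The genuine gap is in (3). For (3a) you infer the abelian structure of $\SS$ from the semi-orthogonal decomposition $\TT=\langle \TT\cap E_1^\perp,\langle E_1\rangle\rangle$ together with the inductive description of $(\TT\cap E_1^\perp)_{\Ab}$, but a semi-orthogonal decomposition of $\TT$ does not determine $\SS=\TT\cap\UU_n$: there are extensions between the two components that create new objects and change which objects are simple in $\SS$. Concretely, in $\UU_3$ take $\TT=\langle S,\tau S\rangle$; then $\TT\cap S^\perp=\langle S^{[2]}\rangle$ with abelian part $\AA_1$, and $\langle S\rangle_{\Ab}\cong\AA_1$, yet $\SS\cong\AA_2$ because the nonsplit extension $0\to\tau S\to S^{[2]}\to S\to 0$ forces $\tau S\in\SS$ and makes $S^{[2]}$ non-simple in $\SS$. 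So the conclusion ``$\SS\cong$ product of the abelian parts of the pieces'' is false as a computation of $\SS$, even though the answer happens to land in the class $\prod\AA_{n_i}$. Similarly, for (3b) the assertion that the $\Ext$-quiver of the simples of $\SS$ has shape $Z_s\sqcup A_{n_1}\sqcup\ldots\sqcup A_{n_k}$ is exactly what needs proving and is not established by invoking ``$E_m$'s self-extension''. The paper closes this by writing $\TT=({}^\perp\TT)^\perp$ and computing $\SS$ as an \emph{iterated abelian perpendicular}, using part (1) (and Proposition~\ref{prop_thicklines}(1)) at each step --- these statements describe $E^\perp_{\Ab}$ itself, not merely a generating collection, and therefore do control the heart. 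Your argument would be repaired by switching from ``$\TT$ is generated by these pieces'' to ``$\TT$ is the right orthogonal of the pieces generating ${}^\perp\TT$'' before passing to abelian categories.
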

\begin{proof}
    As in Proposition~\ref{prop_thicklines} we sketch a proof, cf.~\cite[Sect. 4]{Cheng}.

    We can assume $E=S^{[m]}$ for a simple object $S$. 
    One can check that $E^\perp$
    is generated by the vertex-like family of modules
    \begin{equation}
    \label{eq_SSS}
    \tau^{-1}S, \tau^{-2}S, \ldots, \tau^{m+1}S, S^{[m+1]}\: ; \tau S, \tau^2S,\ldots,\tau^{m-1}S
    \end{equation}
    Note that the $\Ext$-quiver of~\eqref{eq_SSS} is $Z_{n-m}\sqcup A_{m-1}$. By Corollary~\ref{cor_40}, we get an equivalence 
    $$E^\perp_{\Ab}\cong \moddo \k (Z_{n-m}\sqcup A_{m-1})\cong \UU_{n-m}\times \AA_{m-1}.$$
    If $n=m$ then the first group in~\eqref{eq_SSS} contains $n-m$ modules, so is empty, and  $Z_{n-m}$ above does not occur. Note that $E$ together with \eqref{eq_SSS} generate $D^b(\UU_n)$, and  we  deduce~(1). In particular,  $\langle E\rangle$ is right, and similarly left admissible.

    Any thick subcategory $\SS$ in $\UU_n$ contains a simple in $\SS$ object $E$, which must be exceptional or sphere-like. Passing to $E^\perp$ and arguing by induction using (1), we see that any thick subcategory $\TT\subset D^b(\UU_n)$ is generated by a semi-orthogonal sequence of exceptional and sphere-like objects, hence is admissible.  Moreover, assume $\TT$ contains a sphere-like object $E$, then $E^\perp\cong D^b(\AA_{n-1})$ by (1). Hence $\TT\cap E^\perp$ has no sphere-like objects, and one can find a generating semi-orthogonal sequence for $\TT$ with only one sphere-like object as required. This explains (2).

    For (3), use that $\TT$ is admissible and thus $\TT=(^\perp\TT)^\perp$. Consider two cases: $\TT$ is the right orthogonal to a subcategory as in (2a) or as in (2b). 
    Use induction, (1), and Proposition~\ref{prop_thicklines} to see that in the first case $\SS$ is equivalent to~\eqref{eq_UAAA} and hence of type (b), and in the second case $\SS$ is equivalent to~\eqref{eq_AAAA} and hence of type (a).   
\end{proof}

Nice descriptions of thick subcategories in linear quivers and tubes in terms of non-crossing arc arrangements and non-crossing partitions have been found by Ingalls -- Thomas~\cite{IT} for $\AA_n$ and Dichev~\cite{Di} and Krause~\cite{Krause_strings} for $\UU_n$ respectively.

\section{Weighted projective curves}
\label{section_wpc}

Here we collect necessary facts about weighted projective curves and associated categories of coherent sheaves.
In this section and further we assume that the base field $\k$ is  algebraically closed.

\subsection{Weighted projective curves}
\label{section_wpcdef}
Let $X$ be a smooth projective curve over $\k$ and $w\colon X\to \N$ be a function on the set of closed points of $X$ such that $w=1$ except for finitely many points $x_1,\ldots,x_n$.  Function $w$ is called the \emph{weight function}, points $x\in X$ with $w(x)=1$ are called \emph{regular} or \emph{homogeneous}, points $x_1,\ldots,x_n$ are called \emph{orbifold} (or \emph{non-homogeneous, stacky, weighted, singular}, etc), and numbers $r_i:=w(x_i)\ge 2$ are called \emph{multiplicities} or \emph{weights}. The pair $\X=(X,w)$ will be called a \emph{weighted projective curve}.

\subsection{Category of coherent sheaves}
\label{section_wpccoh}
To a weighted projective curve $\X$ one can associate an abelian category $\coh\X$, called \emph{the category of coherent sheaves on $\X$}. It can be defined in several ways:
\begin{itemize}
\item One can construct a Deligne -- Mumford stack $\mathcal X$ from $(X,w)$ by applying \emph{root construction} with centres $x_i$ and multiplicities $r_i$, see e.g. \cite{Cadman}. Then define $\coh \X$ as the category $\coh \mathcal X$ of coherent sheaves on the stack $\mathcal X$.
\item One can obtain $\coh\X$  from $\coh X$ be \emph{inserting weights} $r_1,\ldots,r_n$ at points $x_1,\ldots,x_n$ in purely categorical terms. Let us describe briefly one step of this procedure. Let $\X=(X,w)$ be a  weighted curve, $x\in X$ be  a point, and $p\ge 1$. Denote  $\X'=(X,w')$ with $w'(x)=pw(x)$ and $w'(y)=w(y)$ for $y\ne x$. The idea of obtaining  $\coh\X'$ from  $\coh\X$ is to formally take $p$-th root of the twist functor $\s_x\colon \coh\X\to\coh\X$ and of the functor morphism $\eta_x\colon \id\to \s_x$, see Definition~\ref{def_sx} and Proposition~\ref{prop_sx}.
We refer to~\cite[Section 4]{Lenzing_fda_sing} for details.
\item If $n\ge 3$, there exists a smooth projective curve $Y$,  a finite group $G$ and a faithful $G$-action on $Y$ such that $Y/G\cong X$ and for any $y\in Y$ the stabilizer of $y$ is a cyclic group of order $w(\pi(y))$ (where $\pi\colon Y\to X$ is the quotient morphism). Then the stack $\mathcal X$ is isomorphic to the quotient stack $Y/\!\!/G$ and one can define $\coh\X$ as the category of $G$-equivariant coherent sheaves on $Y$. See \cite{Lenzing} for details.
\item Following an approach by Artin and Zhang~\cite{AZ}, one can consider one-dimensional non-commutative projective schemes. Let $R=\oplus_{i\ge 0} R_i$ be a finitely generated commutative associative unital graded $\k$-algebra with $R_0=\k$. Assume $R$ is an isolated singularity, has Krull dimension $2$ and is graded Gorenstein.   Then the quotient category 
$$\mathrm{qgr}(R):=\frac{\mathrm{grmod{-}}R}{\mathrm{grmod_0{-}}R}$$
of finitely generated graded $R$-modules by the subcategory of finite-dimensional modules is equivalent to the category of coherent sheaves on weighted projective curve $(\Proj(R),w)$ with certain $w$. Moreover, any weighted projective curve can be obtained in this way.
\end{itemize}

Unfortunately, none of the above definitions is straightforward. Alternatively, following Lenzing, one can define coherent sheaves on weighted projective curves axiomatically, as abelian categories satisfying some properties, see~\cite{LenzingReiten}, \cite{LenzingFuchs}, \cite{RvdB}. 

\begin{definition}
\label{def_HH}
A \emph{category of coherent sheaves on a weighted projective curve} is a small $\k$-linear category $\HH$ satisfying the following axioms:
    \begin{enumerate}
        \item[(H1)] $\HH$ is abelian, connected, and any object in $\HH$ is Noetherian,
        \item[(H2)] $\HH$ is $\Ext$-finite: all $\Ext$ spaces are finite-dimensional,
        \item[(H3)] $\HH$ satisfies Serre  duality in the following sense: there exists an autoequivalence $\tau\colon \HH\to \HH$  and natural  isomorphisms 
        \begin{equation}
        \label{eq_SerreDual}
            \Ext^1(F_1,F_2)\cong \Hom(F_2,\tau F_1)^*
        \end{equation}
        of vector spaces for all $F_1,F_2\in\HH$,
        \item[(H4)] $\HH$ contains an object of infinite length.
    \end{enumerate}
Denote by $\HH_0\subset \HH$ the full subcategory of objects having finite length. Clearly $\HH_0$ is preserved by $\tau$. We additionally require
     \begin{enumerate}
        \item[(H5)] Any object in $\HH/\HH_0$ has finite length, and
        \item[(H6)] There are infinitely many $\tau$-orbits of simple objects in $\HH$.
    \end{enumerate}
\end{definition}
\begin{remark}
\begin{enumerate}
    \item It follows from condition (H3) that the derived category $D^b(\HH)$ has  a Serre functor, which   is given by $\tau[1]$ (the composition of $\tau$ and the cohomological shift by $1$). Moreover, (H3) is equivalent to the following: $\HH$ is hereditary,  $D^b(\HH)$ has a Serre functor, and  there are no non-zero projective objects in $\HH$. 
    \item It can be shown that (H1)-(H4) actually imply (H5).
    \item Condition (H6) is needed to exclude some degenerate examples.
\end{enumerate}    
\end{remark}
In fact, any category, satisfying (H1)-(H6), is equivalent to $\coh\X$ for some weighted projective curve $\X=(X,w)$, see \cite{RvdB}.
Below  we explain how to associate a curve with a weight function to a category from Definition~\ref{def_HH}.

\subsection{First properties of coherent sheaves on weighted projective curves: points, torsion and torsion-free sheaves}

From now on, we assume that $\HH$ is a category satisfying conditions (H1)-(H6). We refer to~\cite{LenzingFuchs}, \cite{LenzingReiten}, \cite{RvdB} for the properties of $\HH$  collected below.
Objects in $\HH$ (resp. $\HH_0$) will be called \emph{sheaves} (resp. \emph{torsion sheaves}). A sheaf  is called \emph{torsion-free}, or a \emph{vector bundle}, if it has no torsion  subsheaves. The full subcategory of torsion-free sheaves will be denoted $\HH_+\subset \HH$. Any sheaf is a direct sum of a torsion sheaf and a torsion-free sheaf. 
 
Category $\HH_0$ decomposes into a direct sum
$$\HH_0=\oplus_{x\in X} \HH_x$$
of connected thick (and moreover, Serre) subcategories, where index set $X$ is the set of closed points of a smooth projective curve, also denoted by $X$.

Autoequivalence $\tau\colon \HH\to \HH$ is called \emph{translation}, it restricts to autoequivalences on $\HH_0$ and $\HH_+$. 
Any subcategory $\HH_x$ is $\tau$-stable, and is equivalent to the tube $\UU_{w(x)}$ for some $w(x)\ge 1$. 
Moreover, for all points $x\in X$ except for a finite number one has $w(x)=1$. Thus $w$ is indeed a weight function, and we recover a weighted projective curve $(X,w)$ as defined in Section~\ref{section_wpcdef}. From Section~\ref{section_tubes} we know that $\HH_x$ has $w(x)$ simple sheaves, and they belong to the same $\tau$-orbit. In other words, $\tau$-orbits of simple objects of $\HH$ correspond to points of $X$.

Section~\ref{section_tubes} implies 
\begin{lemma}
    \label{lemma_exceptionalatorbifold}
    Let $E\in \HH_x$ be an indecomposable torsion sheaf of length $i$.
    \begin{enumerate}
    \item $E$ is exceptional if and only if $i<w(x)$. In particular, $\HH_x$ contains exceptional   sheaves if and only if $x$ is an orbifold point.
    \item $E$ is sphere-like if and only if $i=w(x)$. Moreover,  any category $\HH_x$ contains $w(x)\ge 1$ sphere-like sheaves.
    \end{enumerate}
\end{lemma}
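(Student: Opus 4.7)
The plan is to reduce the lemma to the classification of indecomposable objects in tubes recalled in Section~\ref{section_tubes}. From the axiomatic characterisation of $\coh\X$ discussed after Definition~\ref{def_HH}, the summand $\HH_x$ of the torsion part is, for every $x\in X$, equivalent (as a $\k$-linear abelian category) to the tube $\UU_{w(x)}$ of rank $w(x)$. Any such equivalence preserves lengths, endomorphism rings and $\Ext^1$-spaces, and in particular preserves being exceptional or sphere-like. Hence it suffices to work in $\UU_n$ with $n=w(x)$.

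Fix a simple object $S\in \UU_n$. By Section~\ref{section_tubes}, the indecomposable objects of length $i$ in $\UU_n$ form a single $\tau$-orbit of size $n$, namely $S^{[i]},\tau S^{[i]},\ldots,\tau^{n-1}S^{[i]}$. Since $\tau$ is an autoequivalence, exceptionality and sphere-likeness are $\tau$-invariant, so it is enough to analyse $S^{[i]}$. Section~\ref{section_tubes} records that $S^{[i]}$ is exceptional iff $i<n$ and sphere-like iff $i=n$; the computation proceeds via Serre duality~\eqref{eq_serreU}, which gives $\Ext^1(S^{[i]},S^{[i]})^{*}\cong \Hom(S^{[i]},\tau S^{[i]})$, combined with the uniserial structure of $\UU_n$ which shows that this $\Hom$-space is zero when $i<n$ and one-dimensional when $i=n$, while $\End(S^{[i]})=\k$ for $i\le n$. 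This gives directly the main biconditionals in (1) and~(2).

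For the two additional assertions, note that the indecomposable objects of length $i$ in $\HH_x$ exist for every $i\ge 1$. Hence $\HH_x$ contains exceptional indecomposables iff there is some $i$ with $1\le i<w(x)$, which is equivalent to $w(x)\ge 2$, i.e.\ to $x$ being an orbifold point. For sphere-like indecomposables, specialising $i=n$ in the preceding paragraph yields exactly the $n=w(x)$ objects $S^{[n]},\tau S^{[n]},\ldots,\tau^{n-1}S^{[n]}$, proving that $\HH_x$ contains $w(x)\ge 1$ sphere-like indecomposables.

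I expect no genuine obstacle in this argument: the content of the lemma is entirely a translation of known facts about tubes via the equivalence $\HH_x\cong \UU_{w(x)}$. The only point requiring (trivial) care is to verify that this equivalence is $\k$-linear and exact, so that lengths, $\tau$-orbits and $\Ext^1$-computations match on both sides, which is immediate from the construction of the decomposition $\HH_0=\oplus_{x\in X}\HH_x$.
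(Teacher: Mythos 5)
Your proposal is correct and follows exactly the route the paper intends: the paper gives no separate proof, simply stating that the lemma is implied by Section~\ref{section_tubes} via the equivalence $\HH_x\cong\UU_{w(x)}$, which is precisely your reduction. The extra details you supply (the Serre duality computation $\Ext^1(S^{[i]},S^{[i]})^*\cong\Hom(S^{[i]},\tau S^{[i]})$ and the uniseriality argument) correctly fill in what the paper leaves implicit.
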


For a non-zero sheaf $F\in \HH_x$ we say that $F$ is \emph{supported} at $x$. For a general torsion sheaf~$F$ its support $\Supp(F)\subset X$ is defined as the union of supports of the indecomposable summands of $F$. For a sheaf $F$ that is not torsion we say that $\Supp(F)=X$.

The quotient category $\HH/\HH_0$ is equivalent to the category of finite-dimensional vector spaces over the field
$\k(X)$ of rational functions on the curve $X$. The \emph{rank} of a vector bundle $V\in \HH$ is defined as the dimension of its image in $\HH/\HH_0$. Vector bundles of rank one are called \emph{line bundles}. Any vector bundle has a filtration by line bundles. For convenient reference we state

\begin{lemma}
\label{lemma_homstorsion}
\begin{enumerate}
\item For any $V\in \HH_+, F\in \HH_0$  one has
$$\Hom(F,V)=\Ext^1(V,F)=0.$$
\item For any non-zero $V\in \HH_+$ and any $x\in X$ there is some simple $S\in \HH_x$ such that 
$\Hom(V,S)\ne 0$. Moreover, 
$$\sum_{S\in \Gamma_0(\HH_x)} \dim\Hom(V,S)=\rank (V).$$
\item For any non-zero $V\in \HH_+$, any $x\in X$, and any sphere-like $M\in \HH_x$ one has
$\Hom(V,M)\ne 0$. Moreover, 
$$ \dim\Hom(V,M)=\rank (V).$$
\end{enumerate}
\end{lemma}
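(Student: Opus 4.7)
The plan is to prove parts (1), (2), (3) sequentially, with (2) and (3) relying on (1).

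For (1), any morphism $f \colon F \to V$ has image simultaneously a quotient of $F$ (hence torsion, since $\HH_0$ is a Serre subcategory) and a subsheaf of $V$ (hence torsion-free), so $\im f = 0$ and $\Hom(F, V) = 0$. For $\Ext^1(V, F) = 0$, Serre duality (H3) gives $\Ext^1(V, F) \cong \Hom(F, \tau V)^*$; since $\tau$ restricts to an autoequivalence of $\HH_+$, we have $\tau V \in \HH_+$, and the previous argument applies.

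For (2), by (1) the sequence $0 \to \Hom(V_2, S) \to \Hom(V, S) \to \Hom(V_1, S) \to 0$ is exact for any short exact sequence $0 \to V_1 \to V \to V_2 \to 0$ in $\HH_+$ and any simple $S \in \HH_x$, as $\Ext^1(V_2, S) = 0$. Both $\sum_{S \in \Gamma_0(\HH_x)} \dim \Hom(V, S)$ and $\rank V$ are thus additive on such sequences, so the identity reduces via filtration by line bundles to the case $V = L$ a line bundle. There it amounts to showing that exactly one simple $S \in \HH_x$ satisfies $\Hom(L, S) \neq 0$ and that $\dim \Hom(L, S) = 1$. This is the one step that genuinely uses the structure of $\HH$, which I would import from the description of $\coh \X$ as $G$-equivariant coherent sheaves on a Galois cover $Y \to X$ with cyclic stabilizers (Section~\ref{section_wpccoh}): a line bundle $L$ on $\X$ corresponds to a $G$-equivariant line bundle on $Y$, whose fibre at a preimage of $x$ is a one-dimensional representation of the cyclic stabilizer of order $w(x)$; the associated character of this stabilizer singles out the unique simple $S_\chi \in \HH_x$ admitting a nonzero (and unique up to scalar) morphism from $L$. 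The existence of a simple $S$ with $\Hom(V, S) \neq 0$ for $0 \neq V \in \HH_+$ then follows from the identity, since $\rank V \geq 1$.

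For (3), fix sphere-like $M \in \HH_x$; by the tube description $\HH_x \cong \UU_{w(x)}$ we have $M \cong S^{[w(x)]}$ for a unique simple $S \in \HH_x$. For each $n \geq 1$, applying $\Hom(V, -)$ to $0 \to \tau S^{[n-1]} \to S^{[n]} \to S \to 0$ and using $\Ext^1(V, \tau S^{[n-1]}) = 0$ from (1) yields the recursion $\dim \Hom(V, S^{[n]}) = \dim \Hom(V, S) + \dim \Hom(V, \tau S^{[n-1]})$. Since $\tau S^{[n-1]} \cong (\tau S)^{[n-1]}$, iterating this recursion $w(x)$ times (with $S$ successively replaced by $\tau S, \tau^2 S, \ldots$) gives $\dim \Hom(V, M) = \sum_{i=0}^{w(x)-1} \dim \Hom(V, \tau^i S)$; noting that the sheaves $\tau^i S$, $i = 0, \ldots, w(x) - 1$, exhaust the simples of $\HH_x$, this equals $\sum_{S' \in \Gamma_0(\HH_x)} \dim \Hom(V, S') = \rank V$ by (2), and is nonzero for $V \neq 0$.

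The key obstacle is the line bundle case of part (2): the reductions via Serre duality, additivity, and induction are routine, but the uniqueness and one-dimensionality of the simple quotient of a line bundle at a given point is not purely formal from the axioms (H1)--(H6) and needs structural input, which I would supply through the equivariant-cover (or equivalently, root-stack) realisation of $\HH$.
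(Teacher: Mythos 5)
The paper itself gives no proof of this lemma: it is stated ``for convenient reference'' and the properties of $\HH$ in that subsection are imported wholesale from \cite{LenzingFuchs}, \cite{LenzingReiten}, \cite{RvdB}. So there is nothing to compare line-by-line; what matters is whether your reconstruction is sound, and it essentially is. Part (1) is correct (image of a map from a torsion to a torsion-free sheaf is both of finite length and a subobject of a torsion-free object, hence zero; the $\Ext^1$ vanishing then follows from (H3) since $\tau$ preserves $\HH_+$). Part (3) is a clean formal consequence of (1) and (2) via the uniserial structure of $\HH_x\cong\UU_{w(x)}$ and the fact that $\tau^iS$, $0\le i\le w(x)-1$, exhausts $\Gamma_0(\HH_x)$. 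In part (2) the reduction by additivity along a line-bundle filtration is correct, and you are right that the remaining line-bundle case is the one genuinely non-formal step.

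The one concrete caveat concerns how you discharge that step. The Galois-cover model $Y/\!\!/G$ that you invoke is only available, as the paper itself notes, when there are at least three orbifold points; a weighted $\P^1$ with one or two orbifold points is not a global quotient of a smooth curve by a finite group. So as written your structural input does not cover all cases. The fix is exactly the alternative you mention parenthetically: use the root-stack model, or for weighted projective lines the Geigle--Lenzing graded-module description of Section~\ref{section_WPL} (where $\Hom(\O(\bar x),\O(\bar y))$ is computed explicitly and the simple sheaves at $x_i$ are the cokernels of $U_i\colon\O((j-1)\bar x_i)\to\O(j\bar x_i)$, making the uniqueness and one-dimensionality of the simple quotient of a line bundle at a given point transparent). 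With that substitution the argument is complete and consistent with the sources the paper cites.
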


\subsection{Twist functors}
\label{section_twist}

Here we recall definitions and properties of two families of autoequivalences $\s_x$ and $c_x$ of $\coh \X$, where $x\in X$. Functors $\s_x$ are often called mutations. We consider their iterations  $c_x=\s_x^{w(x)}$ and call them  twists, they should be viewed as the tensor multiplication by the line bundles $\O_{\X}(x)$. We will need twists in Proposition~\ref{prop_bigbig}.

\begin{definition}[See  {\cite[Th. 10.8]{Lenzing_hercat}}, also {\cite{Lenzing_fda_sing}}, {\cite{Meltzer}}]
\label{def_sx}
Let  $x\in X$ be a point. One can define an autoequivalence
$\s_x\colon D^b(\HH)\to D^b(\HH)$ given for an object $F$ by the exact triangle
\begin{equation}
    \label{eq_sigma}
    \oplus_{S\in \Gamma_0(\HH_x)}\Hom^{\bul}(S,F)\otimes S\to F\to \s_x(F)\to \oplus_{S\in \Gamma_0(\HH_x)}\Hom^{\bul}(S,F)\otimes S[1],
\end{equation}
where $S$ runs over the set $\Gamma_0(\HH_x)$ of isomorphism classes of simple sheaves supported at~$x$.
The inverse functor is given by the exact triangle
\begin{equation}
    \label{eq_sigmaminus}
    \s_x^{-1}(F)\to F\to \oplus_{S\in \Gamma_0(\HH_x)}\Hom^{\bul}(F,S)^*\otimes S \to \s_x^{-1}(F)[1], 
\end{equation}
where $\Hom^{\bul}(F,S)^*$ is the dual graded $\k$-vector space: $(\Hom^{\bul}(F,S)^*)^i=\Hom^{-i}(F,S)^*$.
\end{definition}

Conceptual explanation for the fact that these formulas define mutually inverse functors is that $\s_x$ is a spherical twist in the sense of \cite{Logvinenko}. Indeed, let 
$$\AA=\add\left(\bigoplus_{S\in \Gamma_0(\HH_x)}S\right)\subset \HH$$ 
be the additive envelope of simple objects in $\HH_x$, this is a semi-simple abelian category. Then the functor $D^b(\AA)\to D^b(\HH)$ induced by the inclusion $\AA\to \HH$ is spherical, and the associated spherical twist is $\s_x$, see \cite[Lemma 2.7]{KuzPer}.

\begin{prop}[See \cite{Meltzer}, \cite{Lenzing_fda_sing}]
\label{prop_sx}
Functors $\s_x$ have the following properties
\begin{enumerate}
    \item $\s_x$ restricts to an autoequivalence $\HH\to \HH$,
    \item $\s_x$ preserves subcategories $\HH_0, \HH_y, \HH_+\subset \HH$ for all $y\in X$,
    \item $\s_x=\tau^{-1}$ on $\HH_x$, $\s_x=\id$ on $\HH_y$ for $y\ne x$,
    \item for $V$ a vector bundle, \eqref{eq_sigma} is the $\HH_x$-universal extension
    $$0\to V\to \s_x(V)\to \oplus_{S\in \Gamma_0(\HH_x)}\Ext^1(S,V)\otimes S\to 0,$$
    \item for $V$ a vector bundle, \eqref{eq_sigmaminus} is the $\HH_x$-universal semi-simple quotient
    $$0\to \s_x^{-1}(V)\to V\to \oplus_{S\in \Gamma_0(\HH_x)}\Hom(V,S)^*\otimes S\to 0,$$
    \item \eqref{eq_sigma} defines a natural transformation of functors $\eta_x\colon \id\to \s_x$.
\end{enumerate}
\end{prop}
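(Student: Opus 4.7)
The plan is to reduce everything to direct manipulation of the two defining triangles \eqref{eq_sigma}, \eqref{eq_sigmaminus}, using hereditariness of $\HH$ and Serre duality (H3). The text already notes (via \cite{KuzPer}) that $\s_x$ is the spherical twist along the inclusion of $\add\bigl(\bigoplus_{S\in\Gamma_0(\HH_x)}S\bigr)$ into $\HH$, so I take for granted that $\s_x$ is an autoequivalence of $D^b(\HH)$; the remaining content is (a) preservation of the heart and (b) identification of $\s_x$ on each of $\HH_y$ and $\HH_+$. I would handle the items in the order (3), (4)--(5), (1)--(2), (6), since (3)--(5) supply the ingredients for the general case.

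For (3) with $y\ne x$, any simple $S\in\HH_x$ satisfies $\Hom(S,F)=0$ for $F\in\HH_y$ by disjoint support, and Serre duality then gives $\Ext^1(S,F)\cong \Hom(F,\tau S)^*=0$ because $\tau S\in\HH_x$. Thus \eqref{eq_sigma} collapses and $\s_x|_{\HH_y}=\id$. For $y=x$, I identify $\HH_x\cong \UU_{w(x)}$ and rewrite $\Hom^\bul(S,F)=\Hom(S,F)\oplus \Hom(F,\tau S)^*[-1]$; the left term of \eqref{eq_sigma} then becomes the tautological spherical resolution inside the tube, and I match the resulting cone with $\tau^{-1}F$ on the indecomposable uniserials $S^{[i]}$ by induction on length.

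For (4), Lemma~\ref{lemma_homstorsion}(1) gives $\Hom(S,V)=0$, so $\Hom^\bul(S,V)$ sits only in degree $1$ and \eqref{eq_sigma} reduces to the asserted short exact sequence in $\HH$. The extension class is by construction the identity of $\End\bigl(\bigoplus_S\Ext^1(S,V)\otimes S\bigr)$, whence the universal property is formal. Assertion (5) is strictly dual: Serre duality gives $\Ext^1(V,S)\cong \Hom(S,\tau V)^*=0$ because $\tau$ preserves $\HH_+$, so \eqref{eq_sigmaminus} collapses to the asserted short exact sequence realising the universal semisimple quotient at $x$.

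Finally, for (1) I would decompose any $F\in\HH$ via $0\to T\to F\to V\to 0$ with $T\in\HH_0$ and $V\in\HH_+$ and apply $\s_x$ to obtain a triangle $\s_x(T)\to \s_x(F)\to \s_x(V)$ in $D^b(\HH)$; hereditariness forces the middle to lie in $\HH$ because the outer terms do (by (3) and (4)). For (2), (3) covers $\HH_0$ and each $\HH_y$, while for $\HH_+$ applying $\Hom(S,-)$ to the short exact sequence of (4) gives $\Hom(S,\s_x(V))=0$ (the connecting map to $\Ext^1(S,V)$ is the identity), which rules out torsion at $x$; torsion at $y\ne x$ is excluded by projecting to the cokernel, which is supported at $x$. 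Claim (6) is the observation that the evaluation morphism used to build \eqref{eq_sigma} is functorial in $F$. The main subtlety will be the universality of the extensions in (4) and (5), which propagates through the ``no torsion introduced'' step in (2) for $\HH_+$; once these are pinned down, the rest is routine homological bookkeeping.
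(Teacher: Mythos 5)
The paper gives no proof of Proposition~\ref{prop_sx}: it is quoted from \cite{Meltzer} and \cite{Lenzing_fda_sing}, the only in-text justification being the remark that $\s_x$ is the spherical twist along the inclusion of $\add\bigl(\oplus_{S\in\Gamma_0(\HH_x)}S\bigr)$. Your proposal is therefore a self-contained derivation from the axioms (H1)--(H6), Lemma~\ref{lemma_homstorsion} and the defining triangles, which is more than the paper offers, and its structure is sound: the collapse of \eqref{eq_sigma} on $\HH_y$ for $y\ne x$ via disjoint support plus Serre duality, the degree analysis reducing \eqref{eq_sigma} and \eqref{eq_sigmaminus} to the short exact sequences of (4) and (5), the vanishing $\Hom(S,\s_x(V))=0$ via the connecting isomorphism, and the exclusion of torsion at $y\ne x$ by projecting to the cokernel are all correct. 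Two points deserve more care. First, in (3) for $y=x$, the identification of the cone with $\tau^{-1}F$ on the uniserials $S_0^{[i]}$ needs an explicit octahedron: cone off the socle inclusion $\tau^{i-1}S_0\to S_0^{[i]}$ to obtain $S_0^{[i-1]}$, then verify that the induced class in $\Ext^1(\tau^{-1}S_0,S_0^{[i-1]})$ is non-zero so that the resulting extension is the indecomposable $\tau^{-1}\bigl(S_0^{[i]}\bigr)$; moreover, as stated item (3) is an identity of functors, while your induction only produces object-wise isomorphisms --- this is all the paper ever uses (e.g.\ in Lemma~\ref{lemma_cx}), but it should be flagged. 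Second, for (1) you should add that $\s_x^{-1}$ also preserves $\HH$ (by the same arguments applied to \eqref{eq_sigmaminus}, using (3) and (5)), so that the restriction of $\s_x$ to $\HH$ is an autoequivalence and not merely a fully faithful exact endofunctor.
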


We study the universal semi-simple quotients in more details.
For any $x\in X$ and any $F\in \HH$ we define $\Top_x(F)$ as the maximal semi-simple quotient of $F$, supported at $x$. Explicitly, $\Top_x(F)$ is given by the natural morphism 
\begin{equation}
    \label{eq_top}
    F\to \oplus_{S\in \Gamma_0(\HH_x)} S\otimes \Hom(F,S)^*=\Top_x(F).
\end{equation}
Recall that for a vector bundle $V$ one has exact triple
\begin{equation}
    \label{eq_sigmatop}
    0\to \s^{-1}_x(V)\xra{\eta_x} V\to \Top_x(V)\to 0,
\end{equation}
and
$$\len(\Top_x(V))=\sum_{S\in \Gamma_0(\HH_x)} \dim\Hom(V,S)=\rank(V)$$
by Lemma~\ref{lemma_homstorsion}.
We arrive at  useful
\begin{lemma}
    \label{lemma_toprank}
    Let $V$ be a vector bundle, $x\in X$ and $Q\in \HH_x$ be a quotient sheaf  of $V$. Then $Q$ has at most $\rank(V)$ indecomposable summands. 
\end{lemma}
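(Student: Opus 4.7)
The plan is to reduce the statement to a bound on the length of the semi-simple top of $Q$ by invoking the universal property of the functor $\Top_x$.

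First I would observe that $Q$, being a torsion sheaf supported at $x$, lies in $\HH_x \cong \UU_{w(x)}$, which by Section~\ref{section_tubes} is uniserial. In a uniserial category every indecomposable has a one-dimensional simple top, so if $Q = \bigoplus_{i=1}^m Q_i$ is the decomposition into indecomposables, then
\[
\mathrm{top}(Q) \;=\; \bigoplus_{i=1}^m \mathrm{top}(Q_i)
\]
is semi-simple and $\len(\mathrm{top}(Q)) = m$. Thus the task becomes bounding $\len(\mathrm{top}(Q))$ by $\rank(V)$.

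Next I would compose the given surjection $V \twoheadrightarrow Q$ with the canonical surjection $Q \twoheadrightarrow \mathrm{top}(Q)$ to obtain a surjection $V \twoheadrightarrow \mathrm{top}(Q)$ onto a semi-simple sheaf supported at $x$. The explicit description of $\Top_x(V)$ given in \eqref{eq_top} exhibits it as the universal semi-simple quotient of $V$ supported at $x$: for any semi-simple $T \in \HH_x$, every homomorphism $V \to T$ factors uniquely through the tautological map $V \twoheadrightarrow \Top_x(V)$. Hence the surjection $V \twoheadrightarrow \mathrm{top}(Q)$ factors as $V \twoheadrightarrow \Top_x(V) \twoheadrightarrow \mathrm{top}(Q)$, and so $\mathrm{top}(Q)$ is a quotient of $\Top_x(V)$. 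Combining this with the length formula displayed just after \eqref{eq_sigmatop}, which gives $\len(\Top_x(V)) = \rank(V)$ by Lemma~\ref{lemma_homstorsion}, yields
\[
m \;=\; \len(\mathrm{top}(Q)) \;\le\; \len(\Top_x(V)) \;=\; \rank(V).
\]

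There is no substantial obstacle here; the only point that requires care is the uniseriality of $\HH_x$, which is what guarantees that the number of indecomposable summands of a torsion sheaf supported at $x$ coincides with the length of its top. Without uniseriality the conclusion would fail, so this is the essential geometric input, whereas the rest is a standard universal-property argument.
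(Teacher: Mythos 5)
Your proof is correct and follows essentially the same route as the paper: both reduce the count of summands to the length of a semi-simple quotient of $V$ supported at $x$, factor it through $\Top_x(V)$, and conclude via $\len(\Top_x(V))=\rank(V)$ from Lemma~\ref{lemma_homstorsion}. The only (harmless) difference is that you invoke uniseriality of $\HH_x$ to get $\len(\mathrm{top}(Q))=m$ exactly, whereas the paper only needs the weaker fact that each non-zero summand of $Q$ has a non-zero top, which holds in any finite length category; so uniseriality is not actually the essential input here.
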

\begin{proof}
    Let $Q=\oplus_{i=1}^n Q_i$ with non-zero  $Q_i$. Then any $\Top_x(Q_i)$ is non-zero and semi-simple and there is a surjection $V\to \Top_x(Q)=\oplus_{i=1}^n \Top_x(Q_i)$. By the above discussion, $\Top_x(Q)$ is the quotient of $\Top_x(V)$ and $n\le \len(\Top_x(Q))\le \len(\Top_x(V))=\rank(V)$.
\end{proof}

\begin{definition}
For a vector bundle $V$ and $m\ge 1$, consider the composition of injections
$$\eta_x^m\colon \s^{-m}(V)\xra{\eta_x} \s^{-m+1}(V)\xra{\eta_x} \ldots \s^{-1}(V)\xra{\eta_x} V.$$
Denote by $\Top^{[m]}_x(V)$ its cokernel.  
\end{definition}

\begin{lemma}
    \label{lemma_topm}
    Let $V$ be a vector bundle, $x\in X$ and $Q=\Top_x^{[m]}(V)$. Then $Q\cong \oplus_{i=1}^r Q_i$
    where $r=\rank(V)$, any $Q_i$ is an indecomposable torsion sheaf supported at $x$ and of length $m$. Moreover, assume $$\Top_x(V)\cong \oplus_{i=1}^r S_i$$ 
    with $S_i$ simple, then (recall notation from Section~\ref{section_tubes})
    $$\Top^{[m]}_x(V)\cong \oplus_{i=1}^r S_i^{[m]}.$$
\end{lemma}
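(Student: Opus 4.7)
The plan is to analyze $Q := \Top_x^{[m]}(V) = V/V_m$, where $V_k := \sigma_x^{-k}(V)$, by exploiting the descending chain $V = V_0 \supset V_1 \supset \cdots \supset V_m$, and then pinning down the isomorphism type of each indecomposable summand via a bound on the number of summands together with a bound on the Loewy length.

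First, I will record structural properties of the tower. By Proposition~\ref{prop_sx}(2), each $V_k$ lies in $\HH_+$ and has rank $r$, since the quotients $V_k/V_{k+1} = \Top_x(V_k)$ are torsion. Combining parts~(5) and~(3) of Proposition~\ref{prop_sx} yields
$$V_k/V_{k+1} = \Top_x(V_k) = \sigma_x^{-k}(\Top_x V) = \tau^k\!\left(\bigoplus_{i=1}^r S_i\right) = \bigoplus_{i=1}^r \tau^k S_i,$$
which is semisimple of length $r$. Consequently $Q$ is a torsion sheaf in $\HH_x \cong \UU_{w(x)}$ of length $mr$, and the induced chain $Q = V/V_m \supset V_1/V_m \supset \cdots \supset V_{m-1}/V_m \supset 0$ is a filtration of length $m$ with semisimple successive quotients. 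In particular $\rad^m(Q) = 0$, so the Loewy length of $Q$ is at most $m$.

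Next, I will identify the top of $Q$ and count its indecomposable summands. Any map $V \to T$ with $T$ simple at $x$ must factor through $V \twoheadrightarrow V/V_1 = \Top_x(V)$ and hence vanishes on $V_m \subset V_1$; this gives $\Hom(V,T) = \Hom(Q,T)$ and therefore $\Top_x(Q) = \Top_x(V) = \bigoplus_{i=1}^r S_i$, of length $r$. Since $Q$ is a quotient of the rank-$r$ vector bundle $V$ and is supported at $x$, Lemma~\ref{lemma_toprank} bounds the number of indecomposable summands of $Q$ by $r$; as each indecomposable in $\UU_{w(x)}$ is uniserial with simple top, the number of summands equals $\len(\Top_x(Q)) = r$. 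Thus $Q = \bigoplus_{j=1}^r Q_j$ with $Q_j = S_j^{[l_j]}$ (after reindexing so that tops match) and $\sum_j l_j = \len(Q) = mr$.

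Finally, the Loewy length of $\bigoplus_j S_j^{[l_j]}$ equals $\max_j l_j$, so $\max_j l_j \le m$. Combined with $\sum_j l_j = mr$ over $r$ summands, this forces $l_j = m$ for every $j$, giving $Q \cong \bigoplus_{i=1}^r S_i^{[m]}$ as desired. The one step that requires genuine care is the identification $\Top_x(Q) = \Top_x(V)$: one has to rule out that $V/V_m$ has simple quotients beyond those pulled back from $V$. The Loewy-length pinch at the end is the clean payoff of setting up the filtration correctly and presents no real difficulty.
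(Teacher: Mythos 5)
Your proof is correct and follows essentially the same route as the paper's: both use the filtration of $Q$ by the images of $\s_x^{-j}(V)$ with semisimple quotients of length $\rank(V)$, the bound from Lemma~\ref{lemma_toprank}, and the length count $\len(Q)=m\cdot\rank(V)$ to force every indecomposable summand to have length exactly $m$. The only cosmetic difference is that you pin the number of summands to exactly $r$ via the identification $\Top_x(Q)=\Top_x(V)$ (which the paper also invokes, just at the end), whereas the paper bounds each $\len(Q_i)\le m$ by intersecting the filtration with $Q_i$ and then squeezes both equalities out of the single chain of inequalities.
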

\begin{proof}
Let $Q\cong \oplus_{i=1}^r Q_i$ with indecomposable $Q_i$. By Lemma~\ref{lemma_toprank}, $r\le \rank(V)$. By construction, $Q=\Top^{[m]}_x(V)$ has a filtration $F_j$ of length $m$ with semi-simple factors $\Top_x(\s^{-j}_x(V))$, $j=0,\ldots,m-1$.
Intersecting $F_j$ with  $Q_i$  one gets a filtration of $Q_i$ of length $m$ with semi-simple factors. Since $Q_i$ is uniserial, these factors are simple (if non-zero), and $\len(Q_i)\le m$.   Also we see that the length of $Q$ is $m\cdot \rank (V)$. That is,
$$m\cdot \rank(V)=\len(Q)=\sum_{i=1}^r\len(Q_i)\le mr \le m\cdot \rank(V).$$
It follows that we have $\len(Q_i)=m$ for all $i$ and $r=\rank(V)$. We have 
$$\oplus_i S_i\cong \Top_x(V)\cong \Top_x(Q)\cong \oplus_i \Top_x(Q_i),$$
and we may assume, up to renumbering, that $\Top_x(Q_i)\cong S_i$. Since $\UU_x$ is a uniserial category, it follows that $Q_i\cong S_i^{[m]}$.
\end{proof}

\begin{definition}
\label{def_cx}
    For $x\in X$, consider the autoequivalence $c_x\colon \HH\to \HH$ given by
    $$c_x=\s_x^{w(x)}.$$
    We will call $c_x$ a \emph{twist}. 
\end{definition}
Note that one has an exact sequence
\begin{equation}
\label{eq_ctop}
    0\to c_x^{-1}(V)\to V\to \Top_x^{[w(x)]}(V)\to 0
\end{equation}
for any vector bundle $V$.

\begin{lemma}
\label{lemma_cx}
    The twist~$c_x$ preserves subcategories  of torsion sheaves and torsion-free sheaves, and preserves rank and support of sheaves. Moreover,  $c_x$ acts by identity on torsion sheaves.
\end{lemma}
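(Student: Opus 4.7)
The plan is to derive everything directly from Proposition~\ref{prop_sx} together with the basic structural properties of the tubes $\UU_{w(x)}$ recorded in Section~\ref{section_tubes}. The four claims will be handled in a convenient order: first the action on torsion sheaves, since that is the most specific, and then the remaining statements reduce to general nonsense about autoequivalences.

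First I would show that $c_x$ acts as the identity on $\HH_0$. Decomposing $\HH_0=\bigoplus_{y\in X}\HH_y$, it suffices to check this on each $\HH_y$ separately. For $y\ne x$, Proposition~\ref{prop_sx}(3) gives $\s_x|_{\HH_y}=\id$, so $c_x|_{\HH_y}=\id$. For $y=x$, Proposition~\ref{prop_sx}(3) gives $\s_x|_{\HH_x}=\tau^{-1}$, hence $c_x|_{\HH_x}=\tau^{-w(x)}$; but $\HH_x\cong \UU_{w(x)}$, and in Section~\ref{section_tubes} we noted that $\tau^n=\id$ on $\UU_n$, so $c_x|_{\HH_x}=\id$ as well.

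Next I would verify that $c_x$ preserves $\HH_0$ and $\HH_+$. This is immediate from iterating Proposition~\ref{prop_sx}(2), which says that $\s_x$ preserves both subcategories. In particular $c_x$ preserves the torsion/torsion-free splitting of every sheaf.

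For rank, since $c_x\colon \HH\to \HH$ is an autoequivalence preserving $\HH_0$, it descends to an autoequivalence of the quotient $\HH/\HH_0$, which by Section~\ref{section_wpc} is equivalent to $\vect(\k(X))$. Any autoequivalence of finite-dimensional vector spaces over a field preserves dimension, so $\rank(c_x(F))=\rank(F)$ for every $F\in\HH$. For support, the torsion-free case is automatic (both $F$ and $c_x(F)$ have full support $X$ as soon as their torsion-free parts are non-zero), and the torsion case is automatic from the identity statement above; combining these on the torsion/torsion-free decomposition of a general $F$ gives $\Supp(c_x(F))=\Supp(F)$. No step presents a real obstacle; the only non-formal input is the identity $\tau^{w(x)}=\id$ on the tube $\UU_{w(x)}$, which is already in hand.
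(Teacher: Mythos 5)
Your proof is correct and follows essentially the same route as the paper: the identity on torsion sheaves is checked pointwise via $\s_x|_{\HH_x}=\tau^{-1}$ and $\tau^{w(x)}=\id$ on $\HH_x\cong\UU_{w(x)}$, and everything else is drawn from Proposition~\ref{prop_sx}. The paper leaves the rank and support claims as immediate consequences, whereas you spell them out (rank via the induced autoequivalence of $\HH/\HH_0$); this is a harmless elaboration, not a different argument.
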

\begin{proof}
    Follows from the definition and Proposition~\ref{prop_sx}. For the last statement, let $F\in \HH_x$, then $c_x(F)=\s_x^{w(x)}(F)\cong \tau^{-w(x)}F\cong F$ since $\tau$ has order $w(x)$ on $\HH_x$. For $y\ne x$ and $F\in \HH_y$, one has $\s_x(F)\cong F$ and  $c_x(F)=\s_x^{w(x)}(F)\cong F$.
\end{proof}
\begin{remark}
    Geometrically, let $\HH$ be the category of coherent sheaves on a weighted projective curve $\X=(X,w)$, then $c_x$ is the tensor twist with the line bundle $\O_X(x)$. In particular, for $X\cong \P^1$, twists $c_x$ do not depend on the point $x$ up to an isomorphism. 
\end{remark}

\subsection{Orthogonal subcategories and reduction of weights}

Here we describe the orthogonal subcategory to an exceptional torsion sheaf, this is a crucial instrument for our further considerations. This description is well-known to experts, at least in the case of a weighted projective line (=rational curve), or for $m=1$,  since \cite{GL_perpen}. We find it convenient to sketch  a proof for completeness, which is analogous to \cite[Prop. 6.5]{Cheng}, where~$\X$ is supposed to be a weighted projective line. 
\begin{prop}
\label{prop_excort}
Let $\HH=\coh(\X)$ be a category of coherent sheaves on a weighted projective curve $\X=(X,w)$. Let $E$ be an exceptional torsion sheaf supported  at point $x\in X$, and  $m$ be its length. 
Let $E^\perp_{\Ab}=E^\perp\cap \HH\subset \HH$ be the orthogonal full subcategory. Then 
$$E^{\perp}_{\Ab}=\SS_1\times \SS_2, \quad\text{where}\quad \SS_1\cong \coh (X,w'), \SS_2\cong \AA_{m-1},$$
and $w'(x)=w(x)-m$, $w'(y)=w(x)$ for $y\ne x$.
Moreover, the induced functor $\coh(X,w')\to \coh(X,w)$ preserves the rank and the support of sheaves, and $\SS_2$ is supported at $x$.
\end{prop}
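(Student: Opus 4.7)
Carry out a perpendicular-category reduction in the spirit of Geigle--Lenzing \cite{GL_perpen}: handle the torsion-at-$x$ part via the tube analysis of Proposition~\ref{prop_thicktubes} to extract the $\AA_{m-1}$-factor, then identify the ``bulk'' complement with $\coh(X,w')$ by checking the Lenzing--Reiten axioms of Definition~\ref{def_HH}. Fix a simple $S\in \HH_x$ so that $E\cong S^{[m]}$ under $\HH_x\cong \UU_{w(x)}$; note $m<w(x)$ since $E$ is exceptional. Applying Proposition~\ref{prop_thicktubes}(1) to $E$ inside $\HH_x$ gives $E^{\perp}_{\Ab}\cap \HH_x\cong \UU_{w(x)-m}\times \AA_{m-1}$; define $\SS_2$ to be the $\AA_{m-1}$-factor, generated by the $m-1$ simples $\tau S,\tau^2 S,\ldots,\tau^{m-1}S$, so that $\SS_2\cong \AA_{m-1}$ and $\SS_2\subset \HH_x$.

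Define $\SS_1\subset E^\perp_{\Ab}$ to consist of all sheaves in $\HH_y$ for $y\ne x$, the $\UU_{w(x)-m}$-factor inside $\HH_x$, and every vector bundle lying in $E^\perp_{\Ab}$. The crucial orthogonality calculation is: for a torsion-free $V\in E^\perp_{\Ab}$, since $\Hom(E,V)=0$ by Lemma~\ref{lemma_homstorsion} and $\HH$ is hereditary, the defining condition reduces to $\Ext^1(E,V)=0$, which by Serre duality equals $\Hom(V,\tau E)^*$; as $\tau E=(\tau S)^{[m]}$ is uniserial, iterating the long exact sequence for $\Hom(V,-)$ along its composition series (using $\Ext^1(V,\HH_0)=0$) forces $\Hom(V,\tau^i S)=0$ for all $i=1,\ldots,m$, and Serre duality once more yields $\Ext^1(\tau^i S,V)=0$ for $i=1,\ldots,m-1$. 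Together with the obvious vanishings between torsion sheaves supported at different points, this bi-orthogonalises $\SS_1$ and $\SS_2$ inside $\HH$: $\Hom^p(\SS_1,\SS_2)=\Hom^p(\SS_2,\SS_1)=0$ for $p\in\{0,1\}$. Any $F\in E^\perp_{\Ab}$ splits as $V\oplus F_x^1\oplus F_x^2\oplus \bigoplus_{y\ne x}F_y$ with $F_x^1\in \UU_{w(x)-m}$ and $F_x^2\in \AA_{m-1}$ by the first paragraph; regrouping yields $F=F_1\oplus F_2$ with $F_i\in \SS_i$, whence $E^\perp_{\Ab}=\SS_1\times \SS_2$.

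For the identification $\SS_1\cong \coh(X,w')$, verify the Lenzing--Reiten axioms (H1)--(H6) of Definition~\ref{def_HH}. Abelianness and hereditarity of $\SS_1$ come from Proposition~\ref{prop_hersubcat1}; $\Ext$-finiteness, Noetherianness, (H5), and (H4)--(H6) (infinitely many simples at points $y\ne x$, which coincide with those of $\HH_y$) are inherited from $\HH$. For Serre duality (H3), a direct tube computation shows that $\tau S,\ldots,\tau^{m-1}S,E$ is an exceptional collection in $D^b(\HH)$: $\Ext^1(\tau^j S,\tau^i S)\ne 0$ only when $i=j+1$, while $\Hom(E,\tau^i S)=0$ for $1\le i\le m-1$ (maps factor through $\mathrm{top}(E)=S$) and $\Ext^1(E,\tau^i S)\cong \Hom(\tau^i S,\tau E)^*=0$ (as $\tau^i S$ is not the socle $\tau^m S$ of $\tau E$). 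The left-admissible complement of $\langle \tau S,\ldots,\tau^{m-1}S,E\rangle$ in $D^b(\HH)$ therefore inherits a Serre functor from $D^b(\HH)$ by Bondal--Kapranov, and coincides with $D^b(\SS_1)$ under the bijection of Proposition~\ref{prop_hersubcat2}; since $\SS_1$ is hereditary with no nonzero projectives (inherited from $\HH$), this Serre functor has the form $\tau_1[1]$. The Lenzing--Reiten classification \cite{RvdB} then yields $\SS_1\cong \coh(X'',w'')$; matching $\tau$-orbits of simples and tube ranks point-by-point forces $X''=X$, $w''(y)=w(y)$ for $y\ne x$ (same tubes $\HH_y$), and $w''(x)=w(x)-m$ (from the $\UU_{w(x)-m}$-factor). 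Rank and support preservation under the induced inclusion $\coh(X,w')\cong \SS_1\hookrightarrow \coh(X,w)$ follow from the construction: the inclusion sends torsion to torsion with matching supports and bundles to bundles, so it commutes with the generic-point localisation defining rank.

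The main obstacle will be the final identification: verifying Serre duality on the abelian level for $\SS_1$ (which requires ruling out nonzero projectives and pushing the Bondal--Kapranov inheritance from the derived to the abelian category), and reading off the precise weights $w'$ from the surviving tube structure via the Lenzing--Reiten classification.
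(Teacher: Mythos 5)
Your overall strategy is the paper's: split $E^\perp_{\Ab}$ into $\SS_2=\langle \tau S,\ldots,\tau^{m-1}S\rangle_{\Ab}\cong\AA_{m-1}$ and the complementary piece $\SS_1=\langle S,\tau S,\ldots,\tau^{m-1}S\rangle^\perp_{\Ab}$, check the bi-orthogonality, and identify $\SS_1$ with $\coh(X,w')$ by verifying the axioms of Definition~\ref{def_HH}. Your orthogonality computations (d\'evissage of $\Hom(V,\tau E)$ along the composition series of the uniserial $\tau E$, Serre duality, the tube computation showing $\tau S,\ldots,\tau^{m-1}S,E$ is exceptional) are correct. Two genuine differences in execution: the paper reduces general $m$ to the case $m=1$ by induction along the exceptional collection of simples, while you treat general $m$ in one pass; and for axiom (H3) the paper computes the projection functor explicitly and checks it sends $\HH$ into $\HH'$ (using injectivity of $E\otimes\Hom(E,F)\to F$ for $E$ \emph{simple}), whereas you invoke Bondal--Kapranov plus the criterion ``hereditary $+$ Serre functor on $D^b$ $+$ no nonzero projectives $\Rightarrow$ (H3)''. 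The latter route is legitimate, but ``no nonzero projectives, inherited from $\HH$'' is not a proof: projectivity in the full exact subcategory $\SS_1$ is a \emph{weaker} condition than projectivity in $\HH$ (one only tests $\Ext^1(P,-)$ against objects of $\SS_1$), so this point needs its own argument, e.g.\ Serre duality against sufficiently negative line bundles of $\SS_1$ for the torsion-free case and against $\tau$-twists within the surviving tubes for the torsion case.

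The concrete gap is that you never establish that $\SS_1$ contains a nonzero vector bundle. This is needed for (H4) (all torsion sheaves have finite length, so an object of infinite length must be torsion-free), it is what makes the rank-preservation claim non-vacuous, and it is not automatic: a thick subcategory of $\coh\X$ can consist entirely of torsion sheaves (e.g.\ $\HH_0$ itself), so nothing is ``inherited'' here. The paper produces line bundles in $\SS_1$ as universal extensions $0\to L\to\pi(L)\to E\otimes\Ext^1(E,L)\to 0$ of line bundles $L\in\HH$, and then exhibits an infinite descending chain of sub-bundles with quotients in $\SS_1$ to get (H4); some such construction must be added. Relatedly, ``matching $\tau$-orbits of simples and tube ranks point-by-point forces $X''=X$'' does not determine the underlying curve: over an algebraically closed field a bijection of closed points carries no information about the curve. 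One needs the identification of quotient categories $\SS_1/(\SS_1\cap\HH_0)\cong\HH/\HH_0$ (same function field $\k(X)$), which is also what gives the compatibility of ranks claimed in the last sentence of the statement.
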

\begin{proof}
    We consider the case $m=1$ first. For $X\cong\P^1$ this is~\cite[Th. 9.5]{GL_perpen}. We give  a proof in general case, basing on axioms from Definition~\ref{def_HH}.

    Assume $E$ is a simple exceptional sheaf, supported at point $x$ of weight $w(x)=r\ge 2$.  Denote $\HH':=E^\perp_{\Ab}\subset \HH$. By Proposition~\ref{prop_hersubcat2},  $\HH'\subset \HH$ is thick, and by Proposition~\ref{prop_hersubcat1}, $\HH'$ is abelian hereditary. Also, by Proposition~\ref{prop_hersubcat2} $D^b(\HH')\cong E^\perp$ is a full subcategory of $D^b(\HH)$, hence $\HH'$ is $\Ext$-finite. Clearly, $\HH'$  is Noetherian. Now we prove that $\HH'$ has Serre duality in the form~\eqref{eq_SerreDual}. Consider semi-orthogonal decomposition
    $$D^b(\HH)=\langle D^b(\HH'), \langle E\rangle\rangle,$$
    let $\pi\colon D^b(\HH)\to D^b(\HH')$ be the left adjoint functor to the inclusion. Explicitly, for
    $F\in D^b(\HH)$ it is defined by the exact triangle
    \begin{equation}
    \label{eq_pi}
        E\otimes \Hom^{\bul}(E,F)\to F\to \pi(F)\to E\otimes \Hom^{\bul}(E,F)[1].
    \end{equation}
    It is known \cite[Prop. 3.7]{BK} that $D^b(\HH')$ has a Serre functor $S_{\HH'}$, and one has 
    $$S_{\HH'}^{-1}(F)\cong \pi\circ S_{\HH}^{-1}(F)$$
    for any $F\in D^b(\HH')$. It suffices to verify that $S_{\HH'}$ sends $\HH'$ to $\HH'[1]$, then one can put $\tau':=S_{\HH'}[-1]$. Recall that $S_{\HH}=\tau[1]$, so we are to check that $\pi$ sends $\HH$ to $\HH'$ (i.e., to complexes concentrated in degree $0$).
    Let $F\in \HH$, the long exact sequence of cohomology associated with \eqref{eq_pi} shows that 
    $H^i(\pi(F))=0$ except for possibly $i=0, -1$, and that 
    $$H^{-1}(\pi(F))=\ker(E\otimes \Hom(E,F)\to F).$$
    The canonical map $E\otimes \Hom(E,F)\to F$ is injective since $E$ is simple, so $H^{-1}(\pi(F))=0$ and $\pi(F)$ is isomorphic to an object of $\HH'$. Therefore, $\HH'$ satisfies (H3). Also note that
    \begin{equation}
        \label{eq_tau'}
        \tau'^{-1}\cong \pi\circ \tau^{-1}.
    \end{equation}

    For any point $y\ne x$ torsion sheaves in $\HH$ supported at $y$ belong to $\HH'$, so (H6) holds. Pick some line bundle $L$ in $\HH$, then \eqref{eq_pi} gives  the universal extension
    $$0\to L\to \pi(L)\to E\otimes \Ext^1(E,L)\to 0,$$
    hence $\pi(L)$ is a line bundle in $\HH'$. Let $0\ne V\in \HH'$ be a vector bundle. Pick a  point $y\ne x$ and a sphere-like sheaf $S_y$ supported at $y$. Using Lemma~\ref{lemma_homstorsion}, one constructs  an infinite sequence in~$\HH'$
    $$V=V_0\supset V_1\supset V_2\supset\ldots$$
    of decreasing sub-bundles with $\coker(V_{i+1}\to V_i)\cong S_y\in\HH'$. Thus $V$ has infinite length in $\HH'$, and
    $\HH'_0 = \HH'\cap \HH_0$: object in $\HH'$ has finite length in $\HH'$ if and only if it has finite length in $\HH$. So, (H4) holds. It is not hard to see that $\HH'/\HH'_0\cong \HH/\HH_0$. Hence, (H5) follows and notions of rank in $\HH$ and in $\HH'$ agree.

    Therefore, $\HH'$ satisfies Definition~\ref{def_HH}. 

    Using \eqref{eq_pi} and \eqref{eq_tau'},  we see that $\tau'$ preserves subcategory $\HH_y\cap \HH'$ for any point $y\in X$.    For $y\ne x$, we have $\HH_y\subset \HH'$ and $\tau=\tau'$ on $\HH_y$ , so there is one $\tau'$-orbit of simples in~$\HH_y$. For $y=x$, note that $\HH_x\cong \UU_r$ and by Proposition~\ref{prop_thicktubes} $\HH'\cap \HH_x\cong \UU_{r-1}$. Therefore $\tau'$ on $\HH_x\cap \HH'$ coincides with $\tau_{\UU_{r-1}}$, and there is one $\tau'$-orbit of $r-1$ simple objects in $\HH_x\cap \HH'$. It follows that points of $\HH'$ are the same as points of $\HH$,  the weight function of $\HH'$ is as stated, and that notions of support for $\HH'$ and for $\HH$ agree. 

    \medskip
    Now we treat the general case. We have $E=S^{[m]}$ for some simple sheaf $S$. One can check as in~\cite[Prop. 6.5]{Cheng} that 
    \begin{equation}
    \label{eq_SS1SS2}
        E^{\perp}_{\Ab}=\SS_1\times \SS_2, \quad\text{where}\quad \SS_1=\langle S,\tau S,\ldots, \tau^{m-1}S\rangle^\perp_{\Ab}, \SS_2=\langle \tau S,\ldots, \tau^{m-1}S\rangle_{\Ab}.
    \end{equation}
    Indeed, $\Hom^\bul(\SS_2,\SS_1)=0$ by definition and $\Hom^\bul(\SS_1,\SS_2)\cong \Hom^\bul(\tau^{-1}\SS_2,\SS_1)^*=0$ again by definition. Clearly $\Hom^\bul(E,\SS_1\times \SS_2)=0$ (note that the simple factors of $E$ are $S,\tau S,\ldots, \tau^{m-1}S$). Further,  $E$ together with $\SS_2$ generate $S$. Since  $S,\tau S,\ldots, \tau^{m-1}S$ is an exceptional collection, 
    one has 
    $$D^b(\HH)=\langle \langle S,\tau S,\ldots, \tau^{m-1}S\rangle^\perp, \langle S,\tau S,\ldots, \tau^{m-1}S\rangle\rangle = \langle\langle \SS_1\rangle, \langle \SS_2\rangle, \langle E\rangle\rangle=\langle\langle \SS_1\rangle\times \langle \SS_2\rangle, \langle E\rangle\rangle.$$ 
    Then~\eqref{eq_SS1SS2} follows since 
    $$E^\perp_{\Ab}=(\langle \SS_1\rangle\times \langle \SS_2\rangle)_{\Ab}=\langle \SS_1\rangle_{\Ab}\times\langle \SS_2\rangle_{\Ab}=\SS_1\times \SS_2.$$

    Family $ \tau S,\ldots, \tau^{m-1}S$ in $\SS_2$ is quiver-like with the $\Ext$-quiver $A_{m-1}$, by Corollary~\ref{cor_40} $\SS_2\cong \AA_{m-1}$. Remaining statements regarding $\SS_1=\langle S,\tau S,\ldots, \tau^{m-1}S\rangle^\perp_{\Ab}$ are obtained by applying inductively $m=1$ case, because $S,\tau S,\ldots, \tau^{m-1}S$ is an exceptional collection of simple sheaves. 
\end{proof}

\subsection{Weighted projective lines}
\label{section_WPL}

Here we recall briefly the construction of the category $\coh\X$ for a weighted projective line $\X=(\P^1,w)$, proposed by Geigle and Lenzing~\cite{GL}. We will need this description to provide some examples in Section~\ref{section_examples}.  

Let $x_1,\ldots,x_n$ be the weighted points of $\X$ and let $r_i=w(x_i)$. Let $V=H^0(\P^1,\O(1))$, choose non-zero $u_i\in V$ such that $u_i(x_i)=0$. Denote
$$A_\X:=S^\bul(V)\otimes \k[U_1,\ldots,U_n]/(U_1^{r_1}-u_1,\ldots,U_n^{r_n}-u_n),$$
where $S^\bul(V)$ denotes the symmetric algebra.
Let $\L$ be the abelian group generated by elements $\bar c,\bar x_1,\ldots,\bar x_n$ and relations $\bar c=r_i\cdot \bar x_i$ for $i=1,\ldots,n$. Then $A_\X$ is an $\L$-graded commutative algebra with the grading given by $\deg(U_i)=\bar x_i$, $\deg (v)=\bar c$ for $v\in V$.
Define $\coh\X$ as the Serre quotient
$$\coh\X=\frac{\mathrm{mod}^\L{-}A_\X}{\mathrm{mod}_0^\L{-}A_\X},$$
where $\mathrm{mod}^\L{-}A_\X$ is the category of finitely generated $\L$-graded $A_\X$-modules, and $\mathrm{mod}_0^\L{-}A_\X\subset \mathrm{mod}^\L{-}A_\X$ is the subcategory of finite-dimensional modules. 

For any $\bar x\in \L$, denote the object in $\coh \X$ given by the shifted free module $A_{\X}(\bar x)$ by $\O(\bar x)$. This is a line bundle, and all line bundles in $\coh\X$ have this form.

One computes $\Hom$ and $\Ext$ spaces between line bundles as follows. Any element in $\L$ can be written uniquely as
$$a\bar c+\sum_{i=1}^n b_i\bar x_i,\quad\text{where}\quad 0\le b_i\le r_i-1.$$
This presentation is called \emph{normal form}. One has
$$\dim\Hom(\O(\bar x),\O(\bar y))=\begin{cases}
    a+1, & a\ge 0,\\
    0, & a<0,
\end{cases}$$
where $a$ is the coefficient at $\bar c$ in the normal form of $\bar y-\bar x$.

Denote $\bar\omega=-2\bar c+\sum_{i=1}^n(r_i-1)\bar x_i$, then  Serre duality for $\coh\X$ is given by the shift of grading by $\bar\omega$:
$$\Ext^1(F_1,F_2)\cong \Hom(F_2,F_1(\bar\omega))^*.$$
This allows one to find $\Ext^1$ between line bundles easily.

Finally, remark that the functors $\s_{x}$ and $c_x \colon\coh\X\to\coh\X$ from Section~\ref{section_twist} are given by the shift of grading on graded modules:
$$\s_{x_i}(F)=F(\bar x_i),\quad \s_x(F)=F(\bar c)$$
for $i=1,\ldots,n$ and any non-singular point $x\in X$, while
$$c_x(F)=F(\bar c)$$
for all $x\in X$.

\section{Thick subcategories  on weighted projective curves, big and small}
\label{section_bigsmall}

Here we introduce two classes of thick subcategories on weighted projective curves that play special role. Similar results have been obtained in \cite{Cheng} for weighted projective \textbf{lines}.

\medskip
Let  $\X=(X,w)$ be a weighted projective curve. 
We  denote by $\coh_x\X\subset \coh \X$ the full subcategory of torsion sheaves supported at a point $x\in X$, and  by $\coh_0\X\subset \coh \X$ the full subcategory of all torsion sheaves.
We denote by $D^b_x(\coh\X)\subset D^b(\coh\X)$ the full subcategory of complexes with  cohomology supported  at $x$. By Proposition~\ref{prop_hersubcat2}, $D^b_x(\coh\X)\cong D^b(\coh_x\X)\cong D^b(\UU_{w(x)})$. 

\begin{definition}
\label{def_curvelike}
Let $\SS\subset \coh\X$ and $\TT\subset D^b(\coh\X)$ be the corresponding thick subcategories (recall Proposition~\ref{prop_hersubcat2}). We will say that
\begin{enumerate}
\item  $\SS$ and $\TT$ are  \emph{small} if 
    $\TT$ is generated by an exceptional collection of torsion sheaves;

\item $\SS$ and $\TT$ are  \emph{big} if 
$\TT$ is the orthogonal to an exceptional collection  of torsion sheaves;

\item $\SS$ and $\TT$ are  \emph{curve-like} if $\SS$ is the essential image of a fully faithful exact functor $\coh\X'\to \coh \X$
preserving rank and support of sheaves for some weighted projective curve  $\X'=(X,w')$.
\end{enumerate}
\end{definition}

\begin{remark}
    If $\TT\subset D^b(\coh\X)$ is curve-like and $\Phi\colon \coh\X'\to \coh \X$ is the functor from Definition~\ref{def_curvelike}, then the functor $D(\Phi)\colon D^b(\coh\X')\to D^b(\coh \X)$ is fully faithful with the image $\TT$ (by Proposition~\ref{prop_hersubcat2}).
\end{remark}

\begin{remark}
By Lemma~\ref{lemma_exceptionalatorbifold}, small subcategories are supported at orbifold points. By Proposition~\ref{prop_thicktubes}, any small subcategory is equivalent to a direct product of the form $\AA_{n_1}\times\ldots\times\AA_{n_k}$. By Proposition~\ref{prop_thicktubes} any subcategory of  a small category is also small, and any subcategory containing a big subcategory is big. 
Also note that small and big subcategories in $D^b(\coh\X)$ are admissible.
\end{remark}

\begin{remark}
By definition, there is a bijection between small and big subcategories of $D^b(\coh\X)$, given by $\TT\mapsto \TT^\perp$.
\end{remark}

\begin{remark}
    Note that big subcategories in $D^b(\coh\X)$ are in bijection with collections $(\TT_x)_x$, where $\TT_x\subset D^b(\coh_x\X)$ is a subcategory generated by an exceptional collection, and $x$ runs over orbifold points of $\X$ (a collection of subcategories $(\TT_x)_x$ corresponds to its right orthogonal, which is big). For any orbifold point $x\in X$ the number of subcategories in $D^b(\coh_x\X)\cong D^b(\UU_{w(x)})$ generated by an exceptional collection is finite, and by Proposition~\ref{prop_thicktubes} equals $\frac 12$  of the total number of thick subcategories in $\UU_{w(x)}$. 
    The number of thick subcategories in $\UU_n$ is $\binom{2n}{n}$ by \cite[Prop. 2.4.2]{Di}. Hence, the number of big subcategories in $D^b(\coh\X)$ is
    $$\prod_{x\colon w(x)\ge 2} \frac 12\binom{2w(x))}{w(x)}.$$
\end{remark} 

\begin{remark}
    Note that for a thick subcategory in $D^b(\coh\X)$  condition ``to be  curve-like'' is stronger than ``to be equivalent to some category $D^b(\coh \X')$ for a weighted projective curve $\X'$'' as Example~\ref{example_3333} below shows. On the contrary, any thick abelian subcategory in $\coh \X$ that is equivalent to some category $\coh \X'$ for a weighted projective curve $\X'$, is curve-like, as we will see in Corollary~\ref{cor_curvecurve}.    
\end{remark}

Now we give several equivalent definitions of big subcategories, highlighting their importance.
Some of the equivalences below appear in \cite[Prop. 6.2, Th. 6.3]{Cheng} in the case of a weighted projective line.
\begin{prop}
    \label{prop_bigbig}
    Let $\X$ be a weighted projective curve and $\SS\subset \coh\X$ be a thick subcategory. Then the following conditions are equivalent:
    \begin{enumerate}
        \item $\SS=\langle E_1,\ldots, E_n\rangle_{\Ab}^\perp$
        for some exceptional collection $E_1,\ldots,E_n$ of torsion sheaves (that is, $\SS$ is big);
        \item[(1')] $\SS=^\perp\langle E_1,\ldots, E_n\rangle_{\Ab}$
        for some exceptional collection $E_1,\ldots,E_n$ of torsion sheaves;
        \item $\SS=\SS_1\times \SS_2$, where $\SS_1$ is curve-like and $\SS_2$ is small;
        \item $\SS$ contains a curve-like subcategory in $\coh\X$;
        \item $\SS$ contains a non-zero vector bundle and a sphere-like torsion sheaf;
        \item $\SS$ contains a non-zero vector bundle and $\SS=c_x(\SS)$ for any point $x\in X$  (cf. Definition~\ref{def_cx});
        \item $\SS$ contains a non-zero vector bundle and $\SS=c_x(\SS)$ for some point $x\in X$.
    \end{enumerate}
\end{prop}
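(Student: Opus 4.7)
The plan is to establish the seven conditions as equivalent via the cycle
\[
(1)\Leftrightarrow(1')\Leftrightarrow(2)\Rightarrow(3)\Rightarrow(4)\Rightarrow(2)
\quad\text{and}\quad
(2)\Rightarrow(5)\Rightarrow(6)\Rightarrow(4),
\]
so that $(4)\Rightarrow(2)$ is the only step requiring real work. For $(1)\Leftrightarrow(1')$, Serre duality with functor $\tau[1]$ on $D^b(\coh\X)$ gives $^{\perp}E=(\tau^{-1}E)^{\perp}$ for any exceptional torsion $E$, and $\tau$ preserves exceptional torsion in each $\coh_x\X$, so the classes of subcategories obtained from left and right orthogonals of exceptional torsion collections coincide. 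For $(1)\Rightarrow(2)$ I would iterate Proposition~\ref{prop_excort}: peeling off the $E_i$'s one at a time turns $\langle E_1,\ldots,E_n\rangle^{\perp}_{\Ab}$ into a product $\coh\X'\times\AA_{n_1}\times\cdots\times\AA_{n_k}$. For $(2)\Rightarrow(1)$, at each orbifold $x$ the intersection $\SS\cap\coh_x\X$ contains the sub-tube from the curve-like factor together with the small factors supported at $x$, so it is of type~(b) in the sense of Proposition~\ref{prop_thicktubes}(3); the complementary exceptional collection in $\coh_x\X$ provided by that proposition, gathered across all orbifold points, produces the required global exceptional torsion collection.

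The implications $(2)\Rightarrow(3)$ and $(5)\Rightarrow(6)$ are definitional. For $(3)\Rightarrow(4)$, a curve-like subcategory $\coh\X'\hookrightarrow\coh\X$ contains line bundles (which are non-zero vector bundles since rank is preserved) and, at any point $x$, sphere-like objects of $\UU_{w'(x)}$ whose total length in $\UU_{w(x)}$ equals $w(x)$ (by the length bookkeeping implicit in the proof of Proposition~\ref{prop_excort}), hence sphere-like in $\coh\X$. For $(2)\Rightarrow(5)$, Lemma~\ref{lemma_cx} shows that $c_x$ acts as the identity on torsion sheaves, so preserves $\SS_2$, and being support-and-rank-preserving it restricts to the analogous twist on any curve-like subcategory, thereby preserving $\SS_1$. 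For $(6)\Rightarrow(4)$, the exact sequence~\eqref{eq_ctop} together with $c_x(V)\in\SS$ forces $\Top_x^{[w(x)]}(V)=c_x(V)/V\in\SS$; by Lemma~\ref{lemma_topm} this is a direct sum of $\rank(V)$ sphere-like sheaves at $x$, and closure under direct summands yields one inside $\SS$.

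The main obstacle is $(4)\Rightarrow(2)$: from a single vector bundle $V\in\SS$ and a single sphere-like $M\in\SS$ supported at one point $y$ we must recover the global product decomposition. The strategy is to show first that for every orbifold $x\in X$ the subcategory $\SS\cap\coh_x\X$ is of type~(b). At $x=y$ this is immediate from $M\in\SS$; at $x\ne y$ the plan is to exploit the $\rank(V)$-dimensional space of non-zero maps $V\to M$ from Lemma~\ref{lemma_homstorsion}(3), whose kernels are sub-bundles of $V$ lying in $\SS$, to build a family of vector bundles in $\SS$ whose top simples at $x$ collectively cover every simple of $\coh_x\X$, and then to identify a sphere-like summand of $\Top_x^{[w(x)]}$ of one of these bundles as lying in $\SS$ via the universal-extension machinery of Definition~\ref{def_sx}. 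Once all $\SS\cap\coh_x\X$ are of type~(b), Proposition~\ref{prop_thicktubes}(3)(b) decomposes each as $\UU_{s_x}\times(\text{small at }x)$; setting $w'(x)=s_x$ at orbifold points and $w'(x)=1$ elsewhere produces a candidate weighted curve $\X'=(X,w')$, and the torsion-free part of $\SS$ is identified with $\coh_+\X'$ by showing that it is stable under every $c_x$ and has exactly the universal-extension structure forced by the tubes $\UU_{s_x}$. The genuinely hard point is the propagation of a single sphere-like from $y$ to every other orbifold: in the rational case $X=\P^1$ all $c_x$ agree up to isomorphism, so one sphere-like already generates every twist-action, whereas for $X$ of higher genus the functors $c_x$ are genuinely distinct and each orbifold point must be treated individually, using the non-trivial geometry of maps from the vector bundles of $\SS$ to torsion sheaves at that point.
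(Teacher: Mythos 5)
Your routing of the easy implications essentially matches the paper's: $(1)\Leftrightarrow(1')$ by Serre duality, $(1)\Rightarrow(2)$ by iterating Proposition~\ref{prop_excort}, $(2)\Rightarrow(3)\Rightarrow(4)$ by inspection, and $(6)\Rightarrow(4)$ via~\eqref{eq_ctop}, Lemma~\ref{lemma_topm} and Lemma~\ref{lemma_exceptionalatorbifold} are all fine. (Two small remarks: the paper proves $(1)\Rightarrow(5)$ rather than $(2)\Rightarrow(5)$, which is cleaner --- $c_x(E_i)\cong E_i$ by Lemma~\ref{lemma_cx}, so $c_x$ preserves the orthogonal; your claim that $c_x$ ``restricts to the analogous twist'' on an abstract curve-like subcategory is not justified from (2) alone. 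And your direct $(2)\Rightarrow(1)$ is broken for the same reason as the main gap below: an exceptional collection orthogonal to $\SS\cap\coh_x\X$ inside each tube need not be orthogonal to the vector bundles in $\SS$, so matching torsion parts pointwise does not identify $\SS$ with a global orthogonal. This is harmless only because $(2)\Rightarrow(1)$ would follow from the rest of the cycle anyway.)

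The genuine gap is the hard implication out of (4), which you correctly identify as the crux but for which you offer a plan rather than a proof, and the plan has steps that fail. First, to propagate a torsion object of $\SS$ from the given point $y$ to an arbitrary point $x$, the paper (Lemma~\ref{lemma_4to1}, Step 4) takes $V\in\SS$ of \emph{minimal rank}, builds a chain $V=V_0\supset V_1\supset\cdots$ with quotients $M$, shows via Serre duality that $\dim\Hom(V_n,V)\ge \chi([V],[V])+n\cdot\rank(V)$ grows without bound while $\dim\Hom(V_n,S)\le\rank(V)$ for a simple $S$ at $x$, and extracts $g\colon V_n\to V$ with $fg=0$; minimality forces $\ker g=0$, so $\coker g$ is a torsion sheaf in $\SS$ supported at $x$. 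Nothing in your sketch supplies this mechanism. Second, your route to a \emph{sphere-like} object at $x$ --- ``a sphere-like summand of $\Top_x^{[w(x)]}$ of one of these bundles \dots via the universal-extension machinery'' --- is circular under hypothesis (4): by~\eqref{eq_ctop}, $\Top_x^{[w(x)]}(W)\in\SS$ requires $c_x^{-1}(W)\in\SS$, i.e.\ exactly the $c_x$-invariance of (6) that is not available, and the universal extension $\s_x(W)$ involves simples at $x$ that need not lie in $\SS$. The paper avoids this by proving $(4)\Rightarrow(1)$ by induction on the total weight of orbifold points: Steps 1--2 reduce to the case where neither $\SS$ nor $\SS^\perp$ contains exceptional torsion sheaves, and Step 5 then uses Serre duality ($S\not\cong\tau^{-1}S$) to force $w\equiv 1$, landing in the smooth-curve base case from \cite{EL}. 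Third, even granting that $\SS\cap\coh_x\X$ is of type (b) for every $x$, your final assembly --- identifying the torsion-free part of $\SS$ with that of a curve-like subcategory --- is asserted, not proved; knowing the torsion part of $\SS$ at every point does not determine its torsion-free part, and this identification is already the entire difficulty on a smooth curve, where the pointwise torsion condition is vacuous.
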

\begin{proof}
   (1) $\Longleftrightarrow$ (1') by Serre duality.

   (1) $\Longrightarrow$ (2) by iterated application of Proposition~\ref{prop_excort}.

   (2) $\Longrightarrow$ (3) is trivial.

   (3) $\Longrightarrow$ (4) is easy: let $\Phi\colon \coh \X'\to \coh\X$ be a fully faithful functor preserving rank and support of sheaves with  $\im(\Phi)=\SS$. Take any sphere-like torsion sheaf and vector bundle on $\X'$ and apply $\Phi$ to get  a sphere-like torsion sheaf and a vector bundle on $\X$ in~$\SS$. 

   (4) $\Longrightarrow$ (1) is contained in a separate Lemma~\ref{lemma_4to1} below due to its length, this is the less trivial  implication.

   (1) $\Longrightarrow$ (5): if $\SS$ consists only of torsion sheaves, then $D^b(\coh\X)=\langle \langle\SS\rangle,\langle
    E_1,\ldots,E_n\rangle\rangle$ also does, and we get a contradiction. For the second, note that $c_x(E_i)\cong E_i$ for any $i$ by Lemma~\ref{lemma_cx}. Consequently, $c_x$ preserves the orthogonal to $\langle E_1,\ldots,E_n\rangle$, which is $\SS$.

   (5) $\Longrightarrow$ (6) is trivial.

   (6) $\Longrightarrow$ (4): let $V\ne 0$ be a vector bundle in $\SS$.  Consider  exact sequence~\eqref{eq_ctop}
   $$0\to c_x^{-1}(V)\to V\to Q\to 0.$$
   By Lemma~\ref{lemma_topm}, $Q\cong \oplus_{i=1}^{\rank(V)} Q_i$, where $Q_i$ are indecomposable sheaves of length $w(x)$ supported at $x$. Hence $Q_i$ are sphere-like by Lemma~\ref{lemma_exceptionalatorbifold}. Note that $c_x^{-1}(V)\in\SS$ and thus $Q_i\in\SS$ as needed.
\end{proof}

\begin{lemma}
    \label{lemma_4to1}
Let $\X$ be a weighted projective curve and $\SS\subset \coh\X$ be a thick subcategory. Assume  $\SS$ contains a non-zero vector bundle and a sphere-like torsion sheaf. Then $\SS$ is big.
\end{lemma}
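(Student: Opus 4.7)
The plan is to analyze $\SS^\perp_{\Ab}$ directly. Since (1) and (1') in Proposition~\ref{prop_bigbig} are equivalent (by Serre duality, as indicated in the proof already given), it suffices to exhibit a finite exceptional collection $E_1,\ldots,E_n$ of torsion sheaves with $\SS={}^\perp\langle E_1,\ldots,E_n\rangle_{\Ab}$. The strategy is to reduce $\SS^\perp_{\Ab}$ to a finite sum of thick subcategories supported at orbifold points and to rule out sphere-like objects in each of them, so that Proposition~\ref{prop_thicktubes}(2a) applies; both hypotheses of the lemma are used, the sphere-like $M\in\SS$ to kill torsion-free objects in $\SS^\perp_{\Ab}$ and the vector bundle $V\in\SS$ to kill every sheaf supported at a regular point and every sphere-like sheaf at an orbifold point.

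First I would show that every $F\in\SS^\perp_{\Ab}$ is torsion. Write $M\in\HH_x$; by Lemma~\ref{lemma_exceptionalatorbifold} the object $M$ has length $w(x)$, and since $\tau$ preserves the subcategory $\HH_x$ and length, $\tau M$ is again sphere-like in $\HH_x$. If $F$ had a non-zero torsion-free summand $V'$, then $V'\in\SS^\perp_{\Ab}$, whereas Serre duality combined with Lemma~\ref{lemma_homstorsion}(3) yields
\[
\Ext^1(M,V')\cong\Hom(V',\tau M)^*\neq 0,
\]
a contradiction. Next I would rule out contributions at regular points: for $y\in X$ with $w(y)=1$, every indecomposable $F\in\coh_y\X\cong\UU_1$ has the form $S_y^{[m]}$, and Lemma~\ref{lemma_topm} produces a surjection $V\twoheadrightarrow(S_y^{[m]})^{\oplus\rank V}$, whence $\Hom(V,F)\neq 0$, and so $\SS^\perp_{\Ab}\cap\coh_y\X=0$.

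Consequently $\SS^\perp_{\Ab}$ decomposes as the finite direct sum $\bigoplus_{y\text{ orbifold}}\bigl(\SS^\perp_{\Ab}\cap\coh_y\X\bigr)$. At each orbifold $y$ I would exclude sphere-like objects: if some sphere-like $N\in\coh_y\X$ lay in $\SS^\perp_{\Ab}$, then again Lemma~\ref{lemma_homstorsion}(3) would give $\Hom(V,N)\neq 0$, contradicting $N\in\SS^\perp_{\Ab}$. By Proposition~\ref{prop_thicktubes}(2a) the thick subcategory $\SS^\perp_{\Ab}\cap\coh_y\X$ of the tube $\coh_y\X$ is therefore generated by a finite exceptional collection of torsion sheaves (finite because an exceptional subcategory of a tube is a product of finitely many $\AA_{n_i}$). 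Concatenating these collections in any order compatible with the orbifold support grouping (valid since $\Hom^\bullet$ vanishes between torsion sheaves with disjoint supports) yields a finite exceptional collection $E_1,\ldots,E_n$ with $\SS^\perp_{\Ab}=\langle E_1,\ldots,E_n\rangle_{\Ab}$.

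To conclude, by Proposition~\ref{prop_hersubcat2} the corresponding triangulated thick closure satisfies $\langle E_1,\ldots,E_n\rangle=\SS^\perp$ in $D^b(\coh\X)$, and since the subcategory generated by an exceptional collection is admissible, the semi-orthogonal decomposition $D^b(\coh\X)=\langle{}^\perp\langle E_1,\ldots,E_n\rangle,\langle E_1,\ldots,E_n\rangle\rangle$ identifies $\SS$ with ${}^\perp\langle E_1,\ldots,E_n\rangle_{\Ab}$, establishing (1'). The main obstacle is the torsion-freeness exclusion in the second paragraph: it is the only step where the sphere-like hypothesis enters essentially, and it depends on the dual observation that $\tau$ preserves sphere-likeness inside $\HH_x$, together with the $\rank$-many morphisms from any non-zero vector bundle into a sphere-like torsion sheaf produced by Lemma~\ref{lemma_homstorsion}(3).
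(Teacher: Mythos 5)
Your computation of $\SS^\perp_{\Ab}$ is correct: the sphere-like sheaf $M$ forces $\SS^\perp_{\Ab}$ to be torsion (via $\Ext^1(M,V')\cong\Hom(V',\tau M)^*\ne 0$), the vector bundle $V$ kills everything supported at regular points and every sphere-like sheaf at orbifold points, and Proposition~\ref{prop_thicktubes}(2a) then exhibits $\SS^\perp$ as a small subcategory $\langle E_1,\ldots,E_n\rangle$. The problem is the very last inference. Knowing $\SS^\perp=\langle E_1,\ldots,E_n\rangle$ only gives the inclusion $\langle\SS\rangle\subseteq{}^\perp\bigl(\langle\SS\rangle^\perp\bigr)={}^\perp\langle E_1,\ldots,E_n\rangle$; the admissibility of $\langle E_1,\ldots,E_n\rangle$ produces a semi-orthogonal decomposition of $D^b(\coh\X)$, but it identifies ${}^\perp\langle E_1,\ldots,E_n\rangle$ as the big component, not $\langle\SS\rangle$. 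For a thick subcategory that is not yet known to be admissible, $\TT={}^\perp(\TT^\perp)$ can fail, and establishing it here is exactly the content of the lemma. To see that the step is not formal, take $\X=X$ a smooth curve of genus $\ge 1$ and $\SS$ the subcategory of all torsion sheaves: then $\SS^\perp=0$ (the empty exceptional collection), yet $\SS\ne\coh X$. This $\SS$ does not satisfy the lemma's hypotheses, but your final paragraph never invokes those hypotheses again, so it would "prove" $\SS={}^\perp 0=\coh X$ in that situation too.

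Concretely, what is missing is the reverse inclusion ${}^\perp\langle E_1,\ldots,E_n\rangle_{\Ab}\subseteq\SS$, and this is where all the difficulty of the statement lives. Specialized to a smooth curve your argument reduces to "$\SS^\perp=0$, hence $\SS=\coh X$", whereas the paper's base case is the nontrivial fact (\cite[Lemma 4.1]{EL}) that a thick subcategory of $\coh X$ containing a vector bundle and a torsion sheaf is all of $\coh X$. The paper's proof is an induction on the total weight: its Step 1 performs essentially your analysis of $\SS^\perp$ (peeling off simple exceptional sheaves via Proposition~\ref{prop_excort}), but Steps 2--5 then do the real work of showing that, once $\SS^\perp$ contains no exceptional torsion sheaves, $\SS$ meets every tube $\coh_x\X$, the curve has no orbifold points left, and the smooth-curve lemma applies. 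Your proposal reproduces the reduction but omits this core argument, so it does not yet prove the lemma.
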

\begin{proof}
   We argue by induction in the total weight of orbifold points. The base case is $w=1$, that is, $\X=X$ is a smooth projective curve. We refer to \cite[Lemma 4.1]{EL} to see that $\SS=\coh X$ as soon as $\SS$ contains a torsion sheaf and a vector bundle. Hence, $\SS$ is big as the orthogonal to the empty exceptional collection. Now we consider the general case. We divided the argument into several steps.

   Step 1. We may assume that $\SS^\perp$ (or, equivalently, $^\perp \SS$) contains no simple exceptional sheaves. 
   Indeed, assume $E$ is a simple exceptional sheaf and $\SS\subset E^\perp_{\Ab}$. By Proposition~\ref{prop_excort}, $E^\perp_{\Ab}\cong \coh \X'$ for a weighted projective curve $\X'$ with smaller total weight of orbifold points. We observe that $\SS$ as a subcategory of $\coh\X'$ also contains a vector bundle and a sphere-like torsion sheaf. Hence by induction hypothesis, $\SS$ is the right orthogonal in $\coh\X'$ to an exceptional collection $E_1,\ldots,E_n$ of torsion sheaves on $\X'$. We deduce that~$\SS$ is the right orthogonal in $\coh \X$ to the exceptional collection $ E_1,\ldots,E_n,E$ of torsion sheaves on $\X$.

   Step 2. We may assume that $\SS$ contains no exceptional torsion sheaves. To check this, we find it more convenient to work with derived categories, let $\TT=\langle\SS\rangle\subset D^b(\coh\X)$. 
   Assume $E\in \TT$ is an exceptional torsion sheaf, supported at $x\in X$. 
   By Propositions~\ref{prop_excort} and~\ref{prop_hersubcat2}, $^\perp E\cong  D^b(\AA_{l-1})\times D^b(\coh\X')$, where $\X'$ is a weighted projective curve  with smaller total weight of orbifold points, $l$ is the length of $E$, and $D^b(\AA_{l-1})$ is generated by an exceptional collection of torsion sheaves supported at $x$.  Let $\TT':=\TT\cap ^\perp E=\TT_1\times \TT_2$, where $\TT_1\subset D^b(\AA_{l-1})$ and $\TT_2\subset D^b(\coh\X')$ are thick subcategories. 
   We have semi-orthogonal decompositions
   $$D^b(\coh\X)=\langle \langle E\rangle,^\perp E\rangle=\langle \langle E\rangle,D^b(\AA_{l-1}),D^b(\coh\X')\rangle\quad\text{and}\quad \TT=\langle \langle E\rangle,\TT_1,\TT_2\rangle.$$   
   We claim that $\TT_2$ as a subcategory of $D^b(\coh\X')$ also contains a vector bundle and a sphere-like torsion sheaf. For the former, assume $\TT_2$ is a torsion subcategory, then all 
   $\TT=\langle \langle E\rangle,\TT_1,\TT_2\rangle$ is a torsion subcategory, and contains no vector bundles.
   For the latter, assume~$\TT_2$ has no sphere-like torsion sheaves. 
   Then $\TT_2 \cap D^b_x(\coh\X)$ is generated by an exceptional collection (see Proposition~\ref{prop_thicktubes}), as well as $\TT_1$ (see Proposition~\ref{prop_thicklines}). It follows that 
   $$\TT\cap D^b_x(\coh\X)=\langle \langle E\rangle,\TT_1,\TT_2\rangle\cap D^b_x(\coh\X)=\langle \langle E\rangle,\TT_1,\TT_2\cap D^b_x(\coh\X)\rangle$$
   is also generated by an exceptional collection. Recall that $D^b_x(\coh\X)\cong D^b(\UU_{w(x)})$, Proposition~\ref{prop_thicktubes} implies now that  
   $\TT\cap \coh_x\X$ contains no sphere-like sheaves. Therefore, $\TT$ contains a sphere-like sheaf $M$ supported at some other point $y\in\X$, and clearly $M\in ^\perp E$ and $M\in \TT_2$. So, the claim holds.
   
   Applying induction hypothesis to $\TT_2\subset D^b(\coh\X')$, we see that $\TT_2$ is big:
   $$D^b(\coh\X')=\langle \TT_2 , \langle E_1,\ldots,E_n\rangle\rangle,$$
   where $E_1,\ldots,E_n$ is an  exceptional collection of torsion sheaves on $\X'$. By Proposition~\ref{prop_thicklines}, 
   $$D^b(\AA_{l-1})=\langle \TT_1 , \langle F_1,\ldots,F_m\rangle\rangle,$$
   where $F_1,\ldots,F_m$ is an  exceptional collection of torsion sheaves on $\X$.
   Therefore,
   \begin{multline*}
       D^b(\coh\X)=\langle \langle E\rangle,D^b(\AA_{l-1}),D^b(\coh\X')\rangle=\langle\langle E\rangle, \TT_1 , \langle F_1,\ldots,F_m\rangle, \TT_2, \langle E_1,\ldots,E_n\rangle\rangle=\\
       =\langle \langle E\rangle, \TT_1 , \TT_2,  \langle F_1,\ldots,F_m\rangle,\langle E_1,\ldots,E_n\rangle\rangle=
   \langle \TT, \langle F_1,\ldots,F_m, E_1,\ldots,E_n \rangle\rangle,
   \end{multline*}
   and $\TT$ is the right orthogonal to the exceptional collection $F_1,\ldots,F_m, E_1,\ldots,E_n$ of torsion sheaves. Hence $\SS=\TT_{\Ab}$ is big.

   Step 3. Let $x\in X$, we claim that $\SS\cap \coh_x\X$ is zero or generated by one sphere-like sheaf $M_x$. In the latter case, any sheaf in $\SS$ supported at $x$ is an iterated extension of 
   $M_x$. Also, for any $F\in\SS$ any non-zero $f\colon F\to M_x$ is surjective.
   
   Indeed, $\SS\cap \coh_x\X$ is a thick subcategory in $\coh_x\X\cong \UU_{w(x)}$ without exceptional objects by Step 2, and the claim follows from Proposition~\ref{prop_thicktubes}. For the last statement, note that $\im(f)$ belongs to $\SS$ since $\SS$ is thick. Hence  $\im(f)$ is an iterated extension of $M_x$ and must be all $M_x$.

   Step 4. We claim that the intersection $\SS\cap \coh_x\X$ is non-zero for any $x\in X$.
   Let $V\in\SS$ be a non-zero vector bundle with the minimal possible rank and $M\in \SS$ be a sphere-like torsion sheaf. One can build an infinite sequence
   $$V=V_0\supset V_1\supset V_2\supset\ldots,$$
   of subobjects fitting into exact sequences 
   \begin{equation}
   \label{eq_VVM}
       0\to V_{n+1}\to V_n\to M\to 0.
   \end{equation}
   Indeed, $\Hom(V_n,M)\ne 0$ by Lemma~\ref{lemma_homstorsion}, and any non-zero morphism $f\colon V_n\to M$ is surjective by Step 3. One can take $V_{n+1}=\ker f$. Note that all $V_i\in \SS$ since $\SS$ is thick. 
   
   Now we claim that $\dim\Hom(V_n,V)$ can be arbitrarily large for $n>>0$. Indeed,
   \begin{align*}
    \dim\Hom(V_n,V)&\ge \chi([V_n],[V])= &\\
    &=\chi([V_{n-1}],[V])-\chi([M],[V]) &\text{by \eqref{eq_VVM}}\\
    &=\chi([V_{n-1}],[V])+\chi([V],[\tau M]) &\text{by Serre duality}\\
    &=\chi([V_{n-1}],[V])+\dim\Hom(V,\tau M)= &\\
    &=\chi([V_{n-1}],[V])+\rank (V) &\text{by Lemma \ref{lemma_homstorsion}}\\
    &=\ldots &\\
    & = \chi([V],[V])+n\cdot \rank (V).
   \end{align*}
   Let $S$ be a simple sheaf supported at $x$ such that $\Hom(V,S)\ne 0$ (use Lemma~\ref{lemma_homstorsion}). Choose non-zero $f\colon V\to S$. Consider the map induced by $f$:
   $$\Hom(V_n,V)\to \Hom(V_n,S).$$
   Note that $\dim\Hom(V_n,V)$ can be arbitrarily large for big $n$, while $\dim\Hom(V_n,S)\le \rank V_n=\rank V$ by Lemma~\ref{lemma_homstorsion}. Hence for some $n$ there exist a  non-zero morphism $g\colon V_n\to V$ such that $fg=0$. Since $\SS$ is thick, $\ker(g)\in\SS$ and is a vector bundle, and by the minimality assumption, $\ker(g)$ must be zero. Let $F=\coker(g)$, this is a torsion sheaf since $\rank(V_n)=\rank(V)$, and $F\in\SS$. Clearly  $f$ factors through $F$, and so $x$ belongs to the support of $F$. Some direct summand of $F$ is a sheaf in $\SS\cap \coh_x\X$.
   $$\xymatrix{V_n\ar[rr]^g&& V\ar[rr] \ar[d]^f && F\ar@{-->}[lld] \\ && S &&}$$

   Step 5. Now we are going to show that $\X=X$ is a smooth curve and we can use the induction base. Let $x\in X$ be a point and $M$ be a sphere-like sheaf in $\SS$ supported at~$x$ from Step~4. Let $S\subset M$ be a simple subsheaf (note that such $S$ is unique). Assume $w(x)>1$, then $S$ is exceptional. By Step 1, $S$ is not in $\SS^\perp$. It means that $\Hom(F,S)\ne 0$ or $\Ext^1(F,S)\ne 0$ for some $F\in\SS$, and one can choose $F$ to be indecomposable. If $\Hom(F,S)\ne 0$, consider the composition $F\to S\to M$. Its image is $S$ and belongs to $\SS$ since $\SS$ is thick. This contradicts to Step 2. Now, if $\Ext^1(F,S)\ne 0$ then $F$ is a torsion sheaf supported at $x$, and is an iterated extension of $M$ by Step 3.  Hence $\Ext^1(M,S)\ne 0$. By Serre duality $\Hom(\tau^{-1}S,M)\ne 0$. But $M$ has unique simple subsheaf, and $S\not\cong \tau^{-1}S$, we get a contradiction. We conclude that $w(x)=1$.
\end{proof}

\section{Main results}
\label{section_main}

Throughout this section $\X=(X,w)$ is a weighted projective curve. 
Here we prove our main result (Theorem~\ref{theorem_main}) saying that any  thick subcategory in $\coh\X$ or $D^b(\coh\X)$ is either big or quiver-like, and obtain some consequences of this.

\subsection{Main theorem}
We start from the following observation.
\begin{prop}
\label{prop_curvelikequiverlike}
    Category $D^b(\coh\X)$ is quiver-like if and only if $X\cong \P^1$ and $\X$ has domestic type. On the contrary, the category $\coh\X$ is not a finite length category, therefore never is quiver-like. 
\end{prop}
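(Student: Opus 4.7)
The plan for the abelian half is immediate: axiom (H4) in Definition~\ref{def_HH} asserts that $\coh\X$ contains an object of infinite length, so $\coh\X$ is not a finite length category, and Theorem~\ref{th_39ab} then forbids $\coh\X$ from being quiver-like.

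For the triangulated dichotomy I would first handle the ``only if'' direction. Suppose $D^b(\coh\X) \cong D^b_0(\k Q)$; by Proposition~\ref{prop_pleasant}(2) the Grothendieck group is free abelian. This forces $X \cong \P^1$: for positive genus the Jacobian $\Pic^0(X)$, a nontrivial divisible group over algebraically closed $\k$, embeds into $\Pic(X)\subset K_0(\coh X)$, and pullback along the map $\pi\colon \X\to X$ to the coarse moduli sends this into $K_0(\coh\X)$ (injectively, split by the proper pushforward $\pi_\ast$), producing a divisible subgroup incompatible with freeness. Hence $X\cong\P^1$, and the finite rank of $K_0(\coh(\P^1,w))$ forces $|Q_0|<\infty$. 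Since any weighted projective line carries a Geigle-Lenzing tilting bundle whose endomorphism algebra has finite global dimension, $D^b(\coh\X)$ admits a strong generator, and Proposition~\ref{prop_QLgeneral}(3) then forces $Q$ acyclic. Thus $D^b_0(\k Q)=D^b(\modd\k Q)$ for some finite-dimensional hereditary algebra $\k Q$, so $\coh\X$ is derived equivalent to a finite-dimensional hereditary algebra.

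It remains to identify the weighted projective lines with this property. By the classical Geigle-Lenzing classification~\cite{GL}, $\coh(\P^1,w)$ is derived equivalent to a finite-dimensional hereditary algebra if and only if $\X$ is of domestic type, in which case the algebra can be taken as the path algebra of an acyclically oriented extended Dynkin diagram $\tilde\Delta\in\{\tilde A,\tilde D,\tilde E\}$. This closes both directions: for domestic $\X$ the explicit tilting bundle yields $D^b(\coh\X)\cong D^b(\modd\k\tilde\Delta)=D^b_0(\k\tilde\Delta)$, so $D^b(\coh\X)$ is quiver-like; for non-domestic $\X$ (tubular or wild) the canonical tilting object has endomorphism algebra of global dimension exactly $2$, which cannot be realized by a hereditary algebra. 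The main obstacle I foresee is this final appeal: the Grothendieck-group and strong-generator steps are routine, but isolating the precise weight-type trichotomy $\chi_\X>0$ that governs hereditariness of the canonical algebra relies on the classical analysis rather than on invariants visible from the axioms (H1)--(H6) alone.
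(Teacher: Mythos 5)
Your proof is correct and its core is the same as the paper's: obtain a strong generator for $D^b(\coh\X)$, invoke Proposition~\ref{prop_QLgeneral}(3) to force the quiver to be finite and acyclic, conclude that $\coh\X$ is derived equivalent to a finite-dimensional hereditary algebra, and then cite the classical classification to identify these as exactly the domestic weighted projective lines. The differences are minor. You insert a preliminary Grothendieck-group step (freeness of $K_0(D^b_0(\k Q))$ from Proposition~\ref{prop_pleasant} versus divisibility of $\Pic^0(X)$) to pin down $X\cong\P^1$ and $|Q_0|<\infty$ before anything else; this is valid (one can also see $K_0(X)$ as a direct summand of $K_0(\coh\X)$ via the semi-orthogonal decomposition $\langle D^b(\coh X),\TT\rangle$ with $\TT$ small, avoiding the appeal to the coarse moduli map), but it is ultimately redundant since the final citation already yields $X\cong\P^1$. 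You also source strong generation from the Geigle--Lenzing tilting bundle, whereas the paper gets it for curves of arbitrary genus from the decomposition into $D^b(\coh X)$ and a small subcategory; both work, though the paper's route does not need $X\cong\P^1$ established first. One caveat: your parenthetical that the canonical algebra having global dimension exactly $2$ ``cannot be realized by a hereditary algebra'' is not by itself an argument --- global dimension is not preserved under derived equivalence (tilted algebras of hereditary algebras routinely have global dimension $2$) --- so the non-domestic exclusion genuinely rests on the cited classification, exactly as in the paper; you acknowledge this in your closing sentence. Your treatment of the abelian half via (H4) and Theorem~\ref{th_39ab}, and your explicit ``if'' direction via the tilting bundle, fill in steps the paper leaves implicit.
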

\begin{proof}
    Note that $D^b(\coh \X)$ has a strong generator. Indeed, it has a semi-orthogonal decomposition 
    $\langle D^b(\coh X),\TT\rangle$ into the derived category of a smooth projective curve (which has a strong generator by \cite[Th. 3.1.4]{BvdB}) and a small subcategory $\TT$ (which is generated by an exceptional collection hence also has a strong generator).   
    Assume $D^b(\coh\X)\cong D^b_0(\k Q)$, then $D^b_0(\k Q)$ has a strong generator. By Proposition~\ref{prop_QLgeneral}, $Q$ is finite and acyclic. Hence $D^b_0(\k Q)\cong D^b(\modd\k Q)$, where $\k Q$ is a hereditary finite-dimensional algebra. It is known then that $\X$ is a weighted projective line of domestic type (see~\cite[Sect. 10]{Lenzing_hercat}).
\end{proof}

\begin{theorem}
\label{theorem_main}
Let $\X$ be a weighted projective curve, $\TT\subset D^b(\coh\X)$ be a thick subcategory, and $\SS=\TT\cap \coh\X$. Then at least one of the following holds:
\begin{itemize}
\item $\TT$ and $\SS$ are big, or 
\item $\TT$ and $\SS$ are  quiver-like. 
\end{itemize}
\end{theorem}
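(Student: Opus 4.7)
The argument proceeds by dichotomy on whether $\SS$ contains simultaneously a non-zero vector bundle and a sphere-like torsion sheaf. In the affirmative case, the implication $(4)\Rightarrow(1)$ of Proposition~\ref{prop_bigbig} forces $\SS$, and hence $\TT$, to be big, as required.

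Otherwise the plan is to exhibit a vertex-like family $\{t_i\}_{i\in I}\subset\SS$ generating $\TT$ as a thick triangulated subcategory, so that Corollary~\ref{cor_40} delivers the quiver-like conclusion simultaneously for $\SS$ and $\TT$. Set $\SS_1:=\SS\cap\coh_0\X$ and $\TT_1:=\langle\SS_1\rangle\subset\TT$. Using the decomposition $\coh_0\X=\oplus_{x\in X}\coh_x\X$ and closure of $\SS$ under direct summands, $\SS_1=\oplus_{x\in X}(\SS_1\cap\coh_x\X)$, each summand being a thick subcategory of the tube $\coh_x\X\cong\UU_{w(x)}$ and therefore a finite length category by Proposition~\ref{prop_thicktubes}. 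If every object of $\SS$ is torsion, then $\SS=\SS_1$ and I put $\TT_2:=0$. Otherwise the present case forces $\SS$ to contain no sphere-like torsion sheaf, so by Proposition~\ref{prop_thicktubes} every summand $\SS_1\cap\coh_x\X$ is generated by an exceptional collection of torsion sheaves; consequently so is $\TT_1$, which is therefore admissible in $D^b(\coh\X)$ and hence in $\TT$. Setting $\TT_2:={}^\perp\TT_1\cap\TT$ yields a semi-orthogonal decomposition $\TT=\langle\TT_1,\TT_2\rangle$ with $\Hom^\bul(\TT_2,\TT_1)=0$. Decomposing any $F\in\SS_2:=\TT_2\cap\coh\X$ as $F=F_t\oplus F_{tf}$ in $\coh\X$, the torsion summand $F_t$ lies in $\SS_1\subset\TT_1$ as well as in $\TT_2$, hence in $\TT_1\cap\TT_2=0$, and so $\SS_2$ consists of torsion-free sheaves. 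Being a thick subcategory of the hereditary category $\coh\X$, by Proposition~\ref{prop_hersubcat1} $\SS_2$ is abelian with exact inclusion, so any strict chain in $\SS_2$ has torsion-free subquotients of rank at least one; thus the length of $F\in\SS_2$ is bounded by $\rank F$ and $\SS_2$ is a finite length category.

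Now let $\{t_i\}_{i\in I}\subset\SS$ represent the isomorphism classes of simple objects in $\SS_1$ together with those in $\SS_2$. The vanishings $\Hom^p(t_i,t_j)=0$ for $p\neq 0,1$ hold because $\coh\X$ is hereditary, and $\End(t_i)=\k$ by Schur's lemma together with $\k$ being algebraically closed. For $i\ne j$ one has $\Hom(t_i,t_j)=0$: within a single $\SS_\ell$ by Schur's lemma; for $t_i\in\SS_1$ and $t_j\in\SS_2$ automatically, as torsion sheaves have no morphisms to torsion-free ones; for $t_i\in\SS_2$ and $t_j\in\SS_1$ by the semi-orthogonality $\Hom^\bul(\TT_2,\TT_1)=0$. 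Since each finite length category $\SS_\ell$ is generated by its simples, the family $\{t_i\}$ generates $\TT_1$ and $\TT_2$ as triangulated subcategories, and hence generates $\TT=\langle\TT_1,\TT_2\rangle$. Corollary~\ref{cor_40} now concludes the proof. The principal obstacle is establishing the semi-orthogonal decomposition, which relies on $\TT_1$ being generated by an exceptional collection; this fails precisely when $\SS$ contains a sphere-like torsion sheaf alongside a non-zero vector bundle, i.e., in the big case.
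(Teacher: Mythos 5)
Your proof is correct and takes essentially the same route as the paper's: the same dichotomy on whether $\SS$ contains both a non-zero vector bundle and a sphere-like torsion sheaf, the big case settled by Proposition~\ref{prop_bigbig} (4)$\Rightarrow$(1) (the paper's Lemma~\ref{lemma_4to1}), and the remaining case by a semi-orthogonal decomposition of $\TT$ into a torsion and a torsion-free piece whose simple objects form a vertex-like generating family fed into Corollary~\ref{cor_40}. The only cosmetic difference is that you work directly with $\SS\cap\coh_0\X$ and inline the finite-length arguments (tube classification on the torsion side, the rank bound on the torsion-free side) where the paper extracts a maximal exceptional collection of torsion sheaves and invokes Corollary~\ref{cor_39}.
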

\begin{proof}
Note that $\TT$ is big $\Longleftrightarrow$ $\SS$ is big by definition.

Case 1: $\SS$ contains a non-zero vector bundle and a sphere-like torsion sheaf. Then $\SS$ is big by Lemma~\ref{lemma_4to1}.

Case 2: $\SS$ does not contain non-zero vector bundles, that is, $\SS$ consists only of torsion sheaves. Then $\TT$ and $\SS$ are quiver-like by Corollary~\ref{cor_39} applied to $\AA=\coh\X$ and the function $r([F])=\len(F)$, where $F$ is a torsion sheaf.

Case 3: $\SS$ does not contain sphere-like torsion sheaves. We prove then that $\TT$ and $\SS$ are quiver-like. Let  $E_1,\ldots, E_n$ be the maximal exceptional collection of torsion sheaves in $\TT$, then $\TT=\langle \langle E_1,\ldots, E_n\rangle,\TT'\rangle$ for $\TT':=^\perp\langle E_1,\ldots, E_n\rangle\cap \TT$. By our assumptions, $\TT'$ does not contain exceptional torsion sheaves and sphere-like torsion sheaves. It follows then (see Proposition~\ref{prop_thicktubes}) that~$\TT'$ does not contain torsion sheaves at all. Therefore, $\TT'$ is quiver-like by 
Corollary~\ref{cor_39} applied to $\AA=\coh\X$ and the function $r([F]):=\rank (F)$ (this argument, originating from~\cite{EL}, is the heart of the paper). Let $\{V_i\}_{i\in I}$ be a vertex-like generating  family of vector bundles in $\TT'$. Also, $\langle E_1,\ldots, E_n\rangle$ is quiver-like by Case 2, let $E'_1,\ldots,E'_n$ be a vertex-like generating family of torsion sheaves in $\langle E_1,\ldots, E_n\rangle$. Then $\{E'_i\}_{1\le i\le n}\cup \{V_i\}_{i\in I}$ is   a vertex-like generating family for $\TT=\langle \langle E_1,\ldots, E_n\rangle,\TT'\rangle$. Indeed, $\Hom(E'_i, V_j)=0$ since $E'_i$ is torsion and $V_j$ is torsion-free, and $\Hom(V_j,E'_i)=0$ by semi-orthogonality. Hence, $\TT$ and $\SS$ are quiver-like by Corollary~\ref{cor_40}.
\end{proof} 

Now we formulate some  consequences of Theorem~\ref{theorem_main} and examine the variety of thick subcategories.

\subsection{Admissible subcategories}

\begin{prop}
\label{prop_admexc}
\begin{enumerate}
    \item Let $\TT \subset D^b(\coh\X)$ be a quiver-like  admissible subcategory. Then $\TT$ is generated by an exceptional collection.

    \item Let $\TT\subset D^b(\coh\X)$ be an admissible category. Then $\TT$ is big or $\TT$ is generated by an exceptional collection. 
\end{enumerate}
 \end{prop}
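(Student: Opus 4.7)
The plan for part (1) is to combine the characterisations in Proposition~\ref{prop_QLgeneral} with the structural properties that admissible subcategories inherit from $D^b(\coh\X)$. First I would observe that $D^b(\coh\X)$ is proper, carries a Serre functor (namely $\tau[1]$), and admits a strong generator, the last point following from the semi-orthogonal decomposition of $D^b(\coh\X)$ into the derived category of the underlying smooth curve (which has a strong generator by \cite[Th.~3.1.4]{BvdB}) and a small orbifold piece (generated by an exceptional collection and hence strongly generated, e.g.\ by the direct sum of its members). An admissible subcategory $\TT$ then inherits all three properties: properness is automatic; the Serre functor of $\TT$ is $\pi^L\circ S\circ i$ where $i\colon\TT\hookrightarrow D^b(\coh\X)$ is the inclusion and $\pi^L$ its left adjoint; and if $G$ is a strong generator of the ambient category then $\pi^L(G)\in\TT$ is a strong generator of $\TT$, using exactness of $\pi^L$.

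Writing $\TT\cong D^b_0(\k Q)$ and decomposing $Q$ into its connected components (to which Proposition~\ref{prop_QLgeneral} applies), properness together with the existence of a strong generator forces each component of $Q$ to be finite, acyclic, and to have finitely many arrows between any two vertices, so $Q$ itself has these properties. In particular $Q$ is loop-free, hence every simple module $s_i\in D^b_0(\k Q)$ is exceptional. I would then fix a reverse-topological ordering $i_1,\ldots,i_n$ of the vertices of $Q$, meaning that every arrow goes from some $i_q$ to some $i_p$ with $q>p$. By Lemma~\ref{lemma_comput}, $\Hom^1(s_{i_q},s_{i_p})$ counts the arrows from $i_p$ to $i_q$ and therefore vanishes for $q>p$ by the choice of ordering, while $\Hom(s_{i_q},s_{i_p})=0$ for $q\ne p$ since the simples are non-isomorphic. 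Consequently $s_{i_1},\ldots,s_{i_n}$ is a full exceptional collection in $D^b_0(\k Q)$, and through the quiver-like equivalence the corresponding objects in $\TT$ form a full exceptional collection generating $\TT$.

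Part (2) is then immediate from part (1) and the main theorem: Theorem~\ref{theorem_main} applied to the admissible thick subcategory $\TT$ asserts that $\TT$ is big or quiver-like, and in the latter case part (1) supplies an exceptional collection generating $\TT$.

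The main subtlety lies in part (1), specifically in checking that admissibility genuinely transports a strong generator (and, separately, a Serre functor) from the ambient category into $\TT$, and in selecting the ordering of simples so that the $\Ext^1$-vanishing produced by Lemma~\ref{lemma_comput} is in the correct direction for a semi-orthogonal sequence. Once $Q$ is known to be finite and acyclic the remainder is routine, but reversing the arrow convention here would easily produce a collection ordered the wrong way, so this step warrants careful attention.
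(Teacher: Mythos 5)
Your proof is correct and follows essentially the same route as the paper: both deduce, from properness and the descent of a strong generator to $\TT$, that the quiver $Q$ is finite and acyclic via Proposition~\ref{prop_QLgeneral}, and part (2) is handled identically via Theorem~\ref{theorem_main}. The only differences are cosmetic: the paper exhibits the exceptional collection as the indecomposable projectives of $\modd \k Q$ after identifying $D^b_0(\k Q)\cong D^b(\modd\k Q)$, whereas you order the simples reverse-topologically (equally valid); note also that the Serre functor of an admissible subcategory is $\pi^R\circ S\circ i$ rather than $\pi^L\circ S\circ i$, but this is immaterial since your argument never actually uses it.
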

\begin{proof}
    For (1), recall that  $D^b(\coh\X)$ has a strong generator, hence $\TT$ also has a strong generator. Assume $\TT\cong D^b_0(\k Q)$, then by Proposition~\ref{prop_QLgeneral} $Q$ is finite and acyclic. Therefore $D^b_0(\k Q)\cong D^b(\modd\k Q)$, and the latter category is generated by an exceptional collection (for example, of indecomposable projective $\k Q$-modules).

    For (2), suppose $\TT$ is not big, then $\TT$ is quiver-like by Theorem~\ref{theorem_main}, and part (1) applies.
\end{proof}

\begin{corollary}
\label{cor_admexcP1}
    Assume that $\X$ is a weighted projective line: that is, $X\cong \P^1$. Then any admissible subcategory  $\TT\subset D^b(\coh\X)$ is generated by an exceptional collection. Moreover, any exceptional collection $E_1,\ldots,E_n$ in $D^b(\coh \X)$ can be completed to a full one.
\end{corollary}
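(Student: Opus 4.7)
The plan is to combine Proposition~\ref{prop_admexc}(2) with Proposition~\ref{prop_bigbig} and the classical existence of full exceptional collections on weighted projective lines.

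First, I would show that any admissible $\TT\subset D^b(\coh\X)$ is generated by an exceptional collection. By Proposition~\ref{prop_admexc}(2), the only case left to handle is when $\TT$ is big. Then by the equivalence (1)$\Leftrightarrow$(2) in Proposition~\ref{prop_bigbig}, the corresponding abelian subcategory decomposes as $\SS=\SS_1\times\SS_2$, where $\SS_1\subset\coh\X$ is curve-like and $\SS_2\subset\coh\X$ is small. Denote by $\TT_i=\langle\SS_i\rangle$ the associated thick subcategories in $D^b(\coh\X)$; since $\SS_1$ and $\SS_2$ are mutually $\Hom^{\bul}$-orthogonal in $\coh\X$, one gets a semi-orthogonal decomposition $\TT=\langle \TT_1,\TT_2\rangle$ (in either order).

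Now curve-likeness of $\SS_1$ means that $\TT_1\cong D^b(\coh\X')$ for some weighted projective curve $\X'=(X,w')$. Since the underlying curve $X\cong\P^1$, $\X'$ is itself a weighted projective line, and by the classical result of Geigle and Lenzing~\cite{GL} the category $\coh\X'$ admits a full exceptional collection (for instance, of line bundles); hence $\TT_1$ is generated by an exceptional collection. On the other hand, $\TT_2$ is small and so generated by an exceptional collection by definition. Concatenating these two collections yields an exceptional collection generating $\TT=\langle\TT_1,\TT_2\rangle$.

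For the second claim, let $E_1,\ldots,E_n$ be an exceptional collection in $D^b(\coh\X)$ and put $\TT=\langle E_1,\ldots,E_n\rangle$. Since $D^b(\coh\X)$ is proper, $\TT$ is admissible, so its left orthogonal $^\perp\TT$ is also admissible (it arises as the left component in $D^b(\coh\X)=\langle {}^\perp\TT,\TT\rangle$). By the first part, $^\perp\TT$ is generated by some exceptional collection $F_1,\ldots,F_m$, and then $F_1,\ldots,F_m,E_1,\ldots,E_n$ is a full exceptional collection extending the given one.

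The only real subtlety is the curve-like step: one needs the explicit geometric input that every weighted projective line carries a full exceptional collection, which is \cite{GL}; without the assumption $X\cong\P^1$ this fails (the underlying smooth curve itself typically has no exceptional objects), which is exactly why an analogous completion statement cannot hold for higher-genus weighted curves.
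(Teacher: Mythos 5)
Your argument is essentially the paper's own proof: both reduce via Proposition~\ref{prop_admexc} to the case of a big subcategory, split it into a curve-like piece (the derived category of another weighted projective line, which has a full exceptional collection by \cite{GL}) and a small piece, and deduce the completion statement by applying the first part to an orthogonal complement. The one slip is the final ordering: with the paper's conventions the decomposition is $D^b(\coh\X)=\langle\TT,{}^\perp\TT\rangle$ (not $\langle{}^\perp\TT,\TT\rangle$), so if $F_1,\ldots,F_m$ generate ${}^\perp\TT$ the completed collection must be $E_1,\ldots,E_n,F_1,\ldots,F_m$ --- the condition $\Hom^\bul(E_j,F_i)=0$ needed to put the $F_i$ first is what $\TT^\perp$ gives, not ${}^\perp\TT$ (the paper avoids this by working with $\langle E_1,\ldots,E_n\rangle^\perp$).
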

\begin{proof}
    By Theorem~\ref{theorem_main}, $\TT$ is big or quiver-like. If $\TT$ is big then $\TT\cong \TT_1\times\TT_2$ by Proposition~\ref{prop_bigbig}, where $\TT_1\cong D^b(\coh\Y)$ for some weighted projective line $\Y=(\P^1,w)$ and $\TT_2$ is small. Note that $\TT_1$ (see~\cite{GL}) and $\TT_2$ are generated by exceptional collections, hence $\TT$ also does. Alternatively, if $\TT$ is quiver-like then Proposition~\ref{prop_admexc}(1) applies.

    The second part follows by considering the orthogonal $\langle E_1,\ldots,E_n\rangle^\perp$ and applying the first part.
\end{proof}

\begin{corollary}
\label{cor_nosod}
    Assume that $\X$ is not a weighted projective line: that is, $g(X)\ge 1$. Then any admissible subcategory  $\TT\subset D^b(\coh\X)$ is either small or big. In particular, 
    there are no exceptional vector bundles on $\X$.
\end{corollary}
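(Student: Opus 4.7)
The plan is to apply Proposition~\ref{prop_admexc}(2) to reduce to exceptional collections, and then show via Serre duality that no non-zero vector bundle on $\X$ can be exceptional when $g(X)\ge 1$; the latter claim simultaneously yields the ``in particular'' statement.

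First, I would invoke Proposition~\ref{prop_admexc}(2): the admissible subcategory $\TT$ is either big (in which case we are done) or of the form $\TT=\langle E_1,\ldots,E_n\rangle$ for some exceptional collection in $D^b(\coh\X)$. Since $\coh\X$ is hereditary, each indecomposable object of $D^b(\coh\X)$ is a shift of an indecomposable object in $\coh\X$; consequently each $E_i$ is, up to shift, either an exceptional torsion sheaf or an exceptional vector bundle. It therefore suffices to prove that no non-zero vector bundle $V\in\coh\X$ is exceptional: then each $E_i$ is (a shift of) an exceptional torsion sheaf and $\TT$ is small by definition.

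For this key step, I plan to produce a non-zero element in $\Ext^1(V,V)$ using Serre duality. The Serre functor of $D^b(\coh\X)$ is $\tau[1]$, and on a vector bundle one has $\tau V\cong V\otimes\omega_\X$, where $\omega_\X$ denotes the dualising sheaf of $\X$. Hence Serre duality gives
$$\Ext^1(V,V)\cong\Hom(V,V\otimes\omega_\X)^*,$$
and any non-zero global section $s\in H^0(\X,\omega_\X)$ yields the non-zero morphism $\id_V\otimes s\colon V\to V\otimes\omega_\X$ (non-zero because $V$ is locally free and $s$ is generically non-vanishing). So it is enough to verify that $H^0(\X,\omega_\X)\ne 0$.

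To compute $H^0(\X,\omega_\X)$, apply Serre duality once more to get $H^0(\X,\omega_\X)\cong H^1(\X,\O_\X)^*$. Let $\pi\colon\X\to X$ be the projection to the coarse moduli space; since $\X$ is a root stack over $X$, one has $\pi_*\O_\X\cong\O_X$ and $R^i\pi_*\O_\X=0$ for $i\ge 1$, so the Leray spectral sequence yields $H^1(\X,\O_\X)\cong H^1(X,\O_X)\cong\k^{g(X)}$, which is non-zero as soon as $g(X)\ge 1$. The only ingredients requiring external input are the identification $\tau V\cong V\otimes\omega_\X$ for torsion-free $V$ and the vanishing of the higher direct images $R^i\pi_*\O_\X$; both are standard facts for weighted projective curves and are implicit in the setup reviewed in Section~\ref{section_wpc}. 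I expect the slickness of the argument to rest entirely on this Serre-duality reduction, which bypasses any need for a Riemann--Roch computation with stacky corrections.
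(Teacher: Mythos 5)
Your argument is correct, but it runs in the opposite logical direction from the paper's. The paper never touches the canonical bundle: it notes that if $\TT$ were neither small nor big, then (since $\TT$ is admissible and a big $\TT^\perp$ would force $\TT$ to be small) neither $\TT$ nor $\TT^\perp$ is big, so Proposition~\ref{prop_admexc}(2) applies to both and exhibits $D^b(\coh\X)$ as generated by a finite exceptional collection; this contradicts the fact that $K_0(\coh\X)$ is not finitely generated when $g(X)\ge 1$. The non-existence of exceptional vector bundles is then \emph{deduced} from the dichotomy: the subcategory generated by such a bundle would be admissible, hence small or big, hence would contain non-zero torsion sheaves. You instead prove the non-existence of exceptional vector bundles \emph{first}, by a direct Serre-duality computation ($\Ext^1(V,V)\cong\Hom(V,V\otimes\omega_\X)^*\ne 0$ because $h^0(\omega_\X)=h^1(\X,\O_\X)=g>0$), and then feed this into Proposition~\ref{prop_admexc}(2) to conclude that a non-big admissible $\TT$ is generated by shifts of exceptional torsion sheaves, i.e.\ is small. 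Both routes are valid. Yours is more geometric and arguably more illuminating about \emph{why} the dichotomy holds, but it imports facts not developed in the paper's axiomatic framework (the identification $\tau\cong -\otimes\omega_\X$ and $R^i\pi_*\O_\X=0$ for the root-stack projection; the latter does hold in all characteristics since $\mu_r$ is linearly reductive, but you should say so). The paper's route stays entirely inside the categorical setup and needs only the non-finite-generation of $K_0$, at the price of making the ``in particular'' clause a corollary of the dichotomy rather than its engine. One small point to make explicit in your write-up: an exceptional object is indecomposable, which is what licenses the claim that each $E_i$ is a shift of a sheaf that is either torsion or torsion-free, and you should note that shifting the $E_i$ back to degree $0$ preserves both generation and semi-orthogonality, so that $\TT$ is indeed small in the sense of Definition~\ref{def_curvelike}.
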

\begin{proof}
    One has a semi-orthogonal decomposition $D^b(\coh\X)=\langle\TT^\perp, \TT\rangle$. If $\TT$ is not small neither big then neither of $\TT,\TT^\perp$ is big, and both are generated by an exceptional collection by Proposition~\ref{prop_admexc}(2). Consequently, all $D^b(\coh\X)$ is generated by an exceptional collection, what is impossible for $g(X)\ge 1$ (for example, because $K_0(\coh\X)$ is not finitely generated).  

    For the second statement note that a subcategory generated by an exceptional  vector bundle is admissible and by the above is small or big and hence contains non-trivial torsion sheaves, which is not possible.
\end{proof}
\begin{remark}
\label{rem_okawa}
    Corollary~\ref{cor_nosod} provides an alternative way to prove a well-known fact: the derived category of a connected smooth projective curve $X$ of genus $>0$ has no non-trivial semi-orthogonal decompositions. Indeed, $D^b(\coh X)$ has no non-trivial small/big subcategories. Note that the classical argument by Okawa~\cite{Okawa} is based on the fact that the canonical line bundle on $X$  has enough non-vanishing sections, and we make no use of that.
\end{remark}

\begin{corollary}
\label{cor_curvecurve}
    Let $\SS\subset \coh\X$ be a thick subcategory and $\SS\cong \coh \Y$ for some weighted projective curve $\Y=(Y,w')$. Then $\SS$ is curve-like.
\end{corollary}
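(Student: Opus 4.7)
The plan is to combine Proposition~\ref{prop_bigbig} with the connectedness axiom (H1) enjoyed by $\coh\Y$.

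First, I would transport axiom (H4) for $\coh\Y$ through the equivalence $\SS\cong\coh\Y$ to obtain an object $F\in\SS$ of infinite length in $\SS$. Since the inclusion $\SS\hookrightarrow\coh\X$ is exact (by Proposition~\ref{prop_hersubcat1}), any strictly descending chain of subobjects of $F$ in $\SS$ is also one in $\coh\X$, so $F$ has infinite length in $\coh\X$ as well. Torsion sheaves on $\X$ lie in $\coh_0\X=\oplus_x \UU_{w(x)}$ and hence have finite length, so the torsion-free summand $F_{+}$ of $F$ is non-zero. Since thick subcategories are closed under direct summands, $F_+\in\SS$. Thus $\SS$ contains a non-zero vector bundle.

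Next, I would rule out the quiver-like alternative: by Theorem~\ref{th_39ab} any quiver-like abelian category is of finite length, but $\coh\Y$ has an object of infinite length, so $\SS$ is not quiver-like. By Theorem~\ref{theorem_main}, $\SS$ must be big. Now I would invoke Proposition~\ref{prop_bigbig}(2) to write $\SS=\SS_1\times\SS_2$ with $\SS_1\subset\coh\X$ curve-like and $\SS_2\subset\coh\X$ small.

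Finally, I would use connectedness. Axiom (H1) says $\coh\Y$, and therefore $\SS$, admits no non-trivial decomposition into a direct product of abelian categories. So either $\SS_1=0$ or $\SS_2=0$. The case $\SS_1=0$ would force $\SS=\SS_2$ to be small, in particular to consist entirely of torsion sheaves, contradicting the non-zero vector bundle found above. Hence $\SS_2=0$ and $\SS=\SS_1$ is curve-like, as claimed.

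The only real subtlety is the transfer of properties through the abstract equivalence $\SS\cong\coh\Y$: the length of an object and the indecomposability of a category as a product are both purely categorical, so they are preserved and combine with the concrete geometry of $\coh\X$ (torsion/torsion-free decomposition, thickness) to force the conclusion.
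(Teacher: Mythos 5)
Your argument is correct and follows essentially the same route as the paper: Theorem~\ref{theorem_main} plus non-quiver-likeness of $\coh\Y$ forces $\SS$ to be big, then Proposition~\ref{prop_bigbig}(2) and connectedness of $\coh\Y$ kill the small factor. Your explicit derivation of ``not quiver-like'' from an infinite-length object is exactly the content of Proposition~\ref{prop_curvelikequiverlike}, which the paper cites at that step, and your extra check that $\SS_1\ne 0$ (via the vector bundle) just makes explicit what the paper leaves implicit.
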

\begin{proof}
    By Theorem~\ref{theorem_main}, $\SS$ is either big or quiver-like. Since $\coh \Y$ is not quiver-like (Proposition~\ref{prop_curvelikequiverlike}), $\SS$ is big. By Proposition~\ref{prop_bigbig}, $\SS\cong \SS_1\times\SS_2$, where $\SS_1$ is curve-like and $\SS_2$ is small. Since $\coh \Y$ is indecomposable into a direct product, we have $\SS_2=0$ and $\SS$ is curve-like.
\end{proof}
\begin{remark}
    Note that  the analogue of Corollary~\ref{cor_curvecurve} for derived categories does not hold. See Example~\ref{example_3333} for a thick subcategory $\TT\subset D^b(\coh \X)$, which is equivalent to $D^b(\coh\Y)$ for a weighted projective line $\Y$, but is not curve-like.
\end{remark}

\subsection{The Jordan--H\"older property and  absence of phantoms}
We observe next that the derived categories of weighted projective curves do not demonstrate such pathologies as phantom subcategories or violation of the Jordan--H\"older property. 

Recall the a semi-orthogonal decomposition is called \emph{maximal} if its components are semi-orthogonal indecomposable. A triangulated category $\DD$ is said to satisfy the Jordan--H\"older property if for any two maximal semi-orthogonal decompositions $\DD=\langle\TT_1,\ldots,\TT_n\rangle=\langle\TT'_1,\ldots,\TT'_m\rangle$ one has $n=m$ and $\TT_i\cong \TT'_{\sigma(i)}$ for some permutation $\sigma\in S_n$ and all $i$.
\begin{corollary}
\label{cor_JH}
    Category $D^b(\coh \X)$ satisfies the Jordan--H\"older property. More precisely, any maximal semi-orthogonal decomposition of $D^b(\coh \X)$ 
    \begin{itemize}
        \item is generated by an exceptional collection if $X\cong \P^1$, or
        \item contains one copy of $D^b(\coh X)$, while other components are generated by exceptional objects if $g(X)\ge 1$. 
    \end{itemize}
\end{corollary}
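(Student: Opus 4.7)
The plan is to take a maximal semi-orthogonal decomposition $D^b(\coh\X) = \langle\TT_1,\ldots,\TT_n\rangle$ and classify each component. Each $\TT_i$ is admissible, so by Proposition~\ref{prop_admexc}(2) it is either big or generated by an exceptional collection; if it is generated by an exceptional collection and semi-orthogonal indecomposable, it must be spanned by a single exceptional object and hence equivalent to $\langle E_i\rangle$.

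Next I would show that at most one $\TT_k$ is big. If $\TT_k$ is big, then by Proposition~\ref{prop_bigbig}(1) and its Serre-dual reformulation (1') both $\TT_k^\perp$ and ${}^\perp\TT_k$ are small, and by Propositions~\ref{prop_thicklines} and~\ref{prop_thicktubes} every thick subcategory of a small subcategory is again small. The semi-orthogonality $\TT_i \subset \TT_k^\perp$ for $i < k$ and $\TT_i \subset {}^\perp\TT_k$ for $i > k$ then forces each $\TT_i$ with $i \neq k$ to be small, hence of the form $\langle E_i\rangle$. For $X \cong \P^1$, Corollary~\ref{cor_admexcP1} tells me that every admissible subcategory is generated by an exceptional collection, so every $\TT_i \cong \langle E_i\rangle$, yielding the first bullet. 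For $g(X) \geq 1$, Corollary~\ref{cor_nosod} forces each $\TT_i$ to be small or big. Not all components can be small, for otherwise $K_0(D^b(\coh\X))$ would be finitely generated, contradicting the fact that $K_0(\coh X)$ surjects onto $\Pic(X)$, which has infinite rank when $g(X) \geq 1$. So exactly one $\TT_k$ is big. By Proposition~\ref{prop_bigbig}(2) together with Proposition~\ref{prop_hersubcat2}, $\TT_k$ decomposes into a product $D^b(\coh\X') \times \TT_k'$ with $\X' = (X,w')$ and $\TT_k'$ small; indecomposability of $\TT_k$ forces $\TT_k' = 0$. Applying Corollary~\ref{cor_nosod} now inside $D^b(\coh\X')$, any orbifold weight in $w'$ would provide a non-trivial admissible small subcategory, and hence a non-trivial SOD of $\TT_k$, again contradicting indecomposability; so $w' \equiv 1$ and $\TT_k \cong D^b(\coh X)$.

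The Jordan--H\"older statement then follows since in both cases the multiset of components of any maximal SOD is constrained to $\langle E_i\rangle$-pieces together with at most one copy of $D^b(\coh X)$, and the number of $\langle E_i\rangle$-pieces is fixed by the rank of the relevant quotient of $K_0(D^b(\coh\X))$. The main obstacle will be the ``at most one big component'' step together with pinning down the curve-like factor as exactly $D^b(\coh X)$: both require careful interplay between Proposition~\ref{prop_bigbig}, Propositions~\ref{prop_thicklines} and~\ref{prop_thicktubes}, and a re-application of Corollary~\ref{cor_nosod} inside the smaller category $D^b(\coh\X')$.
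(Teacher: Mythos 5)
Your proof is correct and follows essentially the same route as the paper: classify each indecomposable admissible component via Proposition~\ref{prop_admexc}(2) as either big or generated by a single exceptional object, then use Proposition~\ref{prop_bigbig}(2) together with indecomposability to force a big component to be $D^b(\coh X)$ with trivial weights. You additionally spell out details the paper leaves implicit (that at most one component can be big, via smallness of $\TT_k^\perp$ and ${}^\perp\TT_k$, and the $K_0$ count pinning down the multiset of components), but the underlying argument is the same.
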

\begin{proof}
By Proposition~\ref{prop_admexc}(2), an indecomposable  admissible subcategory $\TT$ in $D^b(\coh\X)$ is big or is generated by an exceptional object. In the first case $\TT$ is equivalent to $D^b(\coh \X')$  where $\X'=(X,w')$ (see Proposition~\ref{prop_bigbig}), and since $\TT$ is indecomposable we have $w'=1$, $g(X)\ge 1$, and $\TT\cong D^b(\coh X)$. 
\end{proof}

Recall that a (smooth, proper) triangulated category $\TT$ is called a \emph{phantom} if $K_0(\TT)=0$ while $\TT\ne 0$.
\begin{corollary}
\label{cor_phantom}
The category $D^b(\coh\X)$ does not contain  a full triangulated subcategory which is a phantom.
\end{corollary}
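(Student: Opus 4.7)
The plan is to leverage Theorem~\ref{theorem_main}: any non-zero thick subcategory $\TT \subset D^b(\coh\X)$ is either quiver-like or big, and I will show $K_0(\TT) \neq 0$ in each case, contradicting the assumption that $\TT$ is a phantom.

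For the quiver-like case, say $\TT \cong D^b_0(\k Q)$, Proposition~\ref{prop_pleasant}(2) immediately identifies $K_0(\TT) \cong \bigoplus_{i \in Q_0} \Z \cdot [t_i]$, the free abelian group on the vertex set. Since $\TT \neq 0$ forces $Q_0 \neq \emptyset$, this group is non-trivial.

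For the big case, the key input is Proposition~\ref{prop_bigbig}(4), which guarantees that $\TT$ contains a non-zero vector bundle $V$. The rank function on $\coh\X$ is additive on short exact sequences, and therefore extends via the Euler-type formula $[F] \mapsto \sum_{i} (-1)^i \rank H^i(F)$ to a homomorphism $K_0(D^b(\coh\X)) \to \Z$. Restricting this homomorphism along the inclusion $\TT \hookrightarrow D^b(\coh\X)$ gives a map $K_0(\TT) \to \Z$ that sends $[V]$ to $\rank(V) > 0$; hence $K_0(\TT) \neq 0$.

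I do not anticipate any real obstacle: the whole argument rests on Theorem~\ref{theorem_main} and the explicit structural descriptions established earlier, and merely requires producing an additive invariant that detects a non-zero class in each type of subcategory. The only minor technical point is verifying that rank descends to $K_0(D^b(\coh\X))$, which is routine since $\coh\X$ is hereditary and noetherian.
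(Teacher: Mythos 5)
Your proof is correct and follows the paper's overall strategy: invoke Theorem~\ref{theorem_main}, then show $K_0\ne 0$ in each of the two cases. The quiver-like case is handled identically (via Proposition~\ref{prop_pleasant}). In the big case you diverge slightly: the paper cites Proposition~\ref{prop_bigbig} to get the structural decomposition $\SS=\SS_1\times\SS_2$ with $\SS_1$ curve-like and concludes $K_0(\TT)\supset K_0(X)\ne 0$ (which tacitly uses that the curve-like factor is admissible, so its $K_0$ is a direct summand). You instead use only characterization (4) of Proposition~\ref{prop_bigbig} --- the presence of a non-zero vector bundle $V$ --- together with the rank homomorphism $K_0(D^b(\coh\X))\to\Z$; since the composite $K_0(\TT)\to K_0(D^b(\coh\X))\to\Z$ sends $[V]$ to $\rank(V)>0$, the class $[V]$ is already non-zero in $K_0(\TT)$, with no injectivity of $K_0(\TT)\to K_0(D^b(\coh\X))$ needed. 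This is a perfectly valid and arguably more self-contained detection argument; rank descends to $K_0$ simply because the quotient functor $\coh\X\to\coh\X/\coh_0\X$ is exact, so additivity on short exact sequences is immediate (the appeal to hereditariness is not actually needed). Both approaches buy the same conclusion; yours trades the structural description of big subcategories for an explicit additive invariant.
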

\begin{proof}
    Follows from Theorem~\ref{theorem_main}. Indeed, assume $\TT\subset D^b(\coh \X)$ is non-zero. If $\TT\cong D^b_0(Q)$ is quiver-like then $K_0(\TT)\cong \Z^{|Q_0|}\ne 0$ (see Proposition~\ref{prop_pleasant}), and if $\TT$ is big then $K_0(\TT)\supset K_0(X)\ne 0$ (see Proposition~\ref{prop_bigbig}).
\end{proof}

\subsection{Torsion and torsion-free subcategories}
Finally, we discuss torsion-free subcategories in $\coh\X$.
\begin{definition}
    Let $\X$ be a weighted projective curve and $\SS\subset \coh\X$ be a thick subcategory. We will say that $\SS$ is
    \begin{itemize}
        \item \emph{torsion} if all sheaves in $\SS$ are torsion;
        \item \emph{torsion-free} if all torsion sheaves in $\SS$ are zero;
        \item \emph{mixed} if $\SS$ contains both non-zero torsion and torsion-free sheaves.
    \end{itemize}
    We will use the same terminology for the corresponding subcategories in $D^b(\coh\X)$.
\end{definition}

It follows from Theorem~\ref{theorem_main} that any torsion and any torsion-free subcategory is quiver-like. It follows from definitions  that any small subcategory is torsion and any big subcategory is mixed. In particular, any small or big subcategory contains non-trivial torsion sheaves.

As the following proposition shows,  a mixed subcategory is  made of  a torsion and a torsion-free subcategory.
\begin{prop}
\label{prop_ttf}
    Let $\TT\subset D^b(\coh\X)$ be a thick subcategory. Assume $\TT$ is not big, then $\TT=\langle\TT_1,\TT_2\rangle$, where $\TT_1$ is torsion and $\TT_2$ is torsion-free. Semi-orthogonal decomposition of $\TT$ with such properties is unique. Moreover, if $\TT_2\ne 0$ then $\TT_1$ is small.
\end{prop}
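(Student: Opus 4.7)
The strategy is to define $\TT_1$ intrinsically as the subcategory of complexes with torsion cohomology. Set $\SS = \TT \cap \coh\X$, $\SS_1 = \SS \cap \coh_0\X$, and $\TT_1 = \langle\SS_1\rangle$. Since torsion sheaves are stable under subobjects, quotients and extensions, $\SS_1$ is a wide subcategory of $\coh\X$, and by Proposition~\ref{prop_hersubcat2} the thick subcategory $\TT_1 \subseteq \TT$ consists of all complexes in $\TT$ with cohomology in $\SS_1$. Define $\TT_2 = \TT \cap \TT_1^\perp$. To realize this as a semi-orthogonal decomposition we must show $\TT_1$ is admissible in $\TT$; for the torsion-freeness we must verify $\TT_2$ avoids torsion cohomology; for uniqueness we must identify any competing $\TT_1'$ with our $\TT_1$.

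For existence, first if $\SS$ contains no vector bundle then $\SS = \SS_1$, $\TT_2 = 0$, and the decomposition is trivial. Otherwise $\TT$ contains a non-zero vector bundle, and the assumption that $\TT$ is not big combined with Proposition~\ref{prop_bigbig} forces $\TT$ to contain no sphere-like torsion sheaf. Splitting $\SS_1 = \bigoplus_{x \in X}(\SS_1 \cap \coh_x\X)$ over points of $X$, at each regular point $\coh_x\X \cong \UU_1$ and the kernel of a nilpotent endomorphism of any indecomposable $S^{[m]}$ produces the simple object $S$, which is sphere-like; hence $\SS_1 \cap \coh_x\X = 0$ at regular points. At orbifold points $x$, Proposition~\ref{prop_thicktubes}(2) shows that a thick subcategory of $\UU_{w(x)}$ without sphere-like objects is generated by an exceptional collection. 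Hence $\SS_1$ is small, and $\TT_1$ is admissible in $D^b(\coh\X)$; the projection triangles $A \to F \to B$ take $F \in \TT$ to $A \in \TT_1 \subseteq \TT$, and then $B \in \TT$ by closure of $\TT$ under cones, producing the SOD $\TT = \langle\TT_1, \TT_2\rangle$. This argument also proves the ``moreover'' clause, since whenever $\TT_2 \ne 0$ we are necessarily in this case.

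Torsion-freeness of $\TT_2$ is then direct. Any sheaf $F \in \TT_2 \cap \coh\X$ splits canonically as $F = F_t \oplus F_{tf}$ because $\Ext^1(F_{tf}, F_t) \cong \Hom(F_t, \tau F_{tf})^* = 0$ by Serre duality; since $\SS$ is thick the summand $F_t$ lies in $\SS_1 \subseteq \TT_1$, so the inclusion $F_t \hookrightarrow F$ contradicts $F \in \TT_1^\perp$ unless $F_t = 0$. For arbitrary $F \in \TT_2$, heredity of $\coh\X$ gives $F \cong \bigoplus_i H^i(F)[-i]$ with each $H^i(F) \in \TT_2 \cap \coh\X$, reducing to the sheaf case.

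For uniqueness, take any SOD $\TT = \langle\TT_1', \TT_2'\rangle$ with $\TT_1'$ torsion and $\TT_2'$ torsion-free. Every object of $\TT_1'$ has torsion cohomology in $\SS$, hence automatically in $\SS_1$, so $\TT_1' \subseteq \TT_1$. Conversely, for $G \in \SS_1$ the decomposition triangle $B \to G \to A$ with $B \in \TT_2'$, $A \in \TT_1'$ gives a long exact cohomology sequence; using heredity and vanishing of maps from torsion-free to torsion, we extract a short exact sequence $0 \to H^0(B) \to G \to H^0(A) \to 0$ with $H^0(B)$ torsion-free and $H^0(A)$ torsion. Since $H^0(B)$ embeds into the torsion sheaf $G$, we get $H^0(B) = 0$, hence $B = 0$ and $G \in \TT_1'$; this gives $\TT_1 \subseteq \TT_1'$ and the equality $\TT_2 = \TT_2'$ follows. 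The main obstacle is the admissibility step: it is precisely here that Proposition~\ref{prop_bigbig} and the classification of subcategories of tubes in Proposition~\ref{prop_thicktubes} must combine to exclude sphere-likes and guarantee smallness of $\TT_1$.
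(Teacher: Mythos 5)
Your route is genuinely different from the paper's. The paper deduces Proposition~\ref{prop_ttf} from Theorem~\ref{theorem_main}: since $\TT$ is not big it is quiver-like, one splits a vertex-like generating family of sheaves into its torsion and torsion-free members, and the semi-orthogonality $\Hom^\bul(\TT_2,\TT_1)=0$ is read off from the vertex-like conditions plus $\Ext^1(\text{torsion-free},\text{torsion})=0$. You instead identify $\TT_1$ intrinsically as $\langle\SS\cap\coh_0\X\rangle$, use Proposition~\ref{prop_bigbig} to exclude sphere-like torsion sheaves, and then use the tube classification (Proposition~\ref{prop_thicktubes}) to conclude that $\TT_1$ is small, hence admissible, which produces the decomposition without invoking the main theorem at all. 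This is more self-contained and makes the ``moreover'' clause and the uniqueness statement cleaner (uniqueness is essentially built into your definition of $\TT_1$); the paper's argument is shorter only because the heavy lifting was already done in Theorem~\ref{theorem_main}. Your verification that $\SS_1$ vanishes at regular points, the extraction of the short exact sequence $0\to H^0(B)\to G\to H^0(A)\to 0$ in the uniqueness step, and the splitting $F=F_t\oplus F_{tf}$ are all correct (though you once say ``maps from torsion-free to torsion'' where you mean the vanishing $\Hom(\text{torsion},\text{torsion-free})=0$ of Lemma~\ref{lemma_homstorsion}).

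There is, however, one concrete error: you take the orthogonal on the wrong side. With the paper's conventions, $\TT=\langle\TT_1,\TT_2\rangle$ requires $\Hom^\bul(\TT_2,\TT_1)=0$, i.e.\ $\TT_2\subset{}^\perp\TT_1$, whereas your $\TT_2:=\TT\cap\TT_1^\perp$ satisfies $\Hom^\bul(\TT_1,\TT_2)=0$ and therefore yields the decomposition $\TT=\langle\TT_2,\TT_1\rangle$, with the torsion component on the \emph{right}. These are genuinely different subcategories here: $\TT_1$ and the torsion-free part are not completely orthogonal in general, since $\Ext^1(F,V)\cong\Hom(V,\tau F)^*$ need not vanish for $F$ torsion and $V$ torsion-free. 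The slip also propagates to your last line: for a competing decomposition $\langle\TT_1',\TT_2'\rangle$ one has $\TT_2'=\TT\cap{}^\perp\TT_1'$, so the asserted equality $\TT_2=\TT_2'$ fails for your $\TT_2$. The repair is mechanical: set $\TT_2:=\TT\cap{}^\perp\TT_1$ (admissibility of the small category $\TT_1$ gives this decomposition on the correct side), and in the torsion-freeness step replace the inclusion $F_t\hookrightarrow F$ by the projection $F\twoheadrightarrow F_t$, which is a non-zero morphism from $F$ to $\TT_1$ and hence must vanish. With that correction the proof is complete.
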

\begin{proof}
    By Theorem~\ref{theorem_main}, $\TT$ and $\TT\cap \coh\X$ are quiver-like. Let $F_i, i\in I$ be the vertex-like family in $\coh \X$, generating $\TT$. 
    Denote by $I_1$ (resp. $I_2$) the set of $i\in I$ such that $F_i$ is torsion (resp. torsion-free).     
    Let $\TT_1=\langle F_i\rangle_{i\in I_1}$, $\TT_2=\langle F_i\rangle_{i\in I_2}$.  Clearly, $\TT_1$ and $\TT_2$ together generate~$\TT$. Also, for any $i_1\in I_1, i_2\in I_2$ one has $\Hom(F_{i_2},F_{i_1})=0$ by the definition of vertex-like and  $\Ext^1(F_{i_2},F_{i_1})=0$
    since $F_{i_2}$ is torsion-free and $F_{i_1}$ is torsion. Therefore, $\TT_1\subset \TT_2^\perp$ and we get a semi-orthogonal decomposition $\TT=\langle\TT_1,\TT_2\rangle$.
    Note that $\TT_1\cap \coh\X$ coincides with the family of all torsion sheaves in $\TT$, and the uniqueness follows. 

    For the second part, assume $\TT_2\ne 0$. If $\TT_1$ contains a sphere-like torsion sheaf then $\TT$ is big by Lemma~\ref{lemma_4to1}, which contradicts to assumptions. If $\TT_1$ has no sphere-like torsion sheaves then $\TT_1$ is generated by an exceptional collection of sheaves (see Proposition~\ref{prop_thicktubes}), hence is small.
\end{proof}

\section{Examples}
\label{section_examples}

In this section we provide some examples of quiver-like subcategories, illustrating general results from previous sections and demonstrating variety of the world of thick subcategories, cf. Figure~\ref{fig_1}.

Let $\X=(\P^1,w)$ be a weighted projective line with weighted points $x_1,\ldots,x_n$ of weights $r_1,\ldots,r_n$. We will use notation from Section~\ref{section_WPL} throughout this section.
Category $D^b(\coh\X)$ has a full exception collection of line bundles
\begin{equation}
\label{eq_OOO}
\{\O,\O(b \bar x_i), \O(\bar c)\}_{i=1\ldots n, b=1\ldots r_i-1}.
\end{equation}
This collection is strong: all $\Ext^i$ spaces between its objects vanish for $i\ne 0$. The $\Hom$ algebra of \eqref{eq_OOO} is generated by the arrows in the quiver~\eqref{eq_canonical} and the relations  coming from the equalities $U_i^{r_i}=u_i\in\Hom(\O,\O(\bar c))=V$.
\begin{equation}
\label{eq_canonical}
\xymatrix{ && \O(\bar x_1)\ar[r]^{U_1}& \O(2\bar x_1)\ar[r]^-{U_1} & \ldots \ar[r]^-{U_1}& \O((r_1-1)\bar x_1)\ar[rrd]^{U_1} && \\
\O \ar[rru]^{U_1}\ar[rr]^{U_2}\ar[rrdd]^{U_n} && \O(\bar x_2)\ar[r]^{U_2}& \O(2\bar x_2)\ar[r]^-{U_2} & \ldots \ar[r]^-{U_2}& \O((r_2-1)\bar x_2)\ar[rr]^{U_2} && \O(\bar c)\\
&&&\ldots&\ldots\\
&& \O(\bar x_n)\ar[r]^{U_n}& \O(2\bar x_n)\ar[r]^-{U_n} & \ldots \ar[r]^-{U_n} & \O((r_n-1)\bar x_n)\ar[rruu]^{U_n}&&
}
\end{equation}

\begin{example}
\label{example_star}
Fix some numbers $b_i\in\Z$, $0\le b_i\le r_i-1$. Consider the subcollection  
\begin{equation}
\label{eq_OOO1}
\{\O,\O(b \bar x_i)\}_{i=1\ldots n, b=1\ldots b_i}.
\end{equation}
of~\eqref{eq_OOO}. 
Let $\TT_l$ be the subcategory in $D^b(\coh\X)$ generated by~\eqref{eq_OOO1}, and $\SS_l=\TT\cap\coh\X$. Endomorphism algebra of~\eqref{eq_OOO1} is the path algebra of the quiver
\begin{equation}
\label{eq_star1}
 \xymatrix{ &&& \O(\bar x_1)\ar[r]&  \ldots \ar[r]& \O(b_1\bar x_1)  \\
  Q_l\colon &&\O \ar[ru]\ar[r]\ar[rd] &&\ldots&\\
  &&& \O(\bar x_n)\ar[r]&  \ldots \ar[r] & \O(b_n\bar x_n)}\\
\end{equation}
The vertex-like collection in $\SS_l$ generating $\TT_l$ is the left dual exceptional collection to~\eqref{eq_OOO1} (see~\cite{Bondal_associative} for the definition and details):
\begin{equation}
\label{eq_dualstar1}
\xymatrix{  S_{1,b_1}\ar[r]& S_{1,b_1-1}\ar[r]&  \ldots \ar[r]& S_{1,1}\ar[rd] & \\
&& \ldots &\ar[r]& \O\\
 S_{n,b_n}\ar[r]&  S_{n,b_n-1}\ar[r]& \ldots \ar[r] & S_{n,1}\ar[ru]&
}
\end{equation}
where $S_{i,j}:=\coker(\O((j-1)\bar x_i)\to \O(j\bar x_i))$ is a simple torsion sheaf supported at $x_i$.
Note that $S_{ij}\cong S_{i,j+r_i}$ and $\tau S_{ij}\cong S_{i,j-1}$.
Arrows in \eqref{eq_dualstar1} denote $\Ext^1$ spaces, hence the $\Ext$-quiver of~\eqref{eq_dualstar1} is $Q_l$, and $\SS_l\cong \moddo \k Q_l\cong \modd \k Q_l$. 
Note that~$\SS_l$ is in general a mixed subcategory.

Now consider the remaining part of~\eqref{eq_canonical}: 
\begin{equation}
\label{eq_OOO2}
\{\O(b \bar x_i), \O(\bar c)\}_{i=1\ldots n, b=b_i+1\ldots r_i-1}.
\end{equation}
Let $\TT_r$ be the subcategory in $D^b(\coh\X)$ generated by~\eqref{eq_OOO2}, and $\SS_r=\TT_r\cap\coh\X$. Endomorphism algebra of~\eqref{eq_OOO2} is the path algebra of the quiver 
\begin{equation}
\label{eq_star2}
\xymatrix{ &&  \O((b_1+1)\bar x_1)\ar[r]&  \ldots \ar[r]& \O((r_1-1)\bar x_1)\ar[rd] & \\
Q_r\colon &&&\ldots & \ar[r]& \O(\bar c)\\
 && \O((b_n+1)\bar x_n)\ar[r]&  \ldots \ar[r] & \O((r_n-1)\bar x_n)\ar[ru]&}
\end{equation}
The vertex-like collection in $\SS_r$ generating $\TT_r$ is obtained from the right dual exceptional collection to~\eqref{eq_OOO2} by mutating $\O(\bar c)$ to the right end:
\begin{equation}
\label{eq_dualstar2}
\xymatrix{  S_{1,r_1}\ar[r]& S_{1,r_1-1}\ar[r]&  \ldots \ar[r]& S_{1,b_1+2}\ar[rd] & \\
&& \ldots &\ar[r]& \O(\bar c-\sum (r_i-b_i-1)\bar x_i)\\
 S_{n,r_n}\ar[r]&  S_{n,r_n-1}\ar[r]& \ldots \ar[r] & S_{n,b_n+2}\ar[ru]&
}
\end{equation}
The $\Ext$-quiver of~\eqref{eq_dualstar2} is $Q_r^{\op}$ (and not $Q_r$), hence $\SS_r\cong \moddo \k Q_r^{\op}\cong \modd \k Q_r^{\op}$. 
Subcategory~$\SS_r$ is also mixed.       
\end{example}

\begin{example}
\label{example_3333}
In Example~\ref{example_star} let $n=4$, $r_1=\ldots=r_4=3$, $b_1=\ldots=b_4=1$. Then quivers $Q_1$ and $Q_2^{\op}$ are affine Dynkin quivers of type $\~D_4$. Categories  $\TT_l$ and $\TT_r$ are equivalent to $D^b_0(\k \~D_4)\cong D^b(\modd \k \~D_4)\cong D^b(\coh \X_{2,2,2})$, where $\X_{2,2,2}$ is a domestic weighted projective line of type $2,2,2$. Hence both components of semi-orthogonal decomposition
$$D^b(\coh \X)=\langle\TT_l,\TT_r\rangle$$
are equivalent to the derived category of a weighted projective line, and are not curve-like.
\end{example}

\begin{example}
\label{example_tf}
Assume that $n\ge 2$.
In collection~\eqref{eq_OOO}, make mutation of $\O(\bar c)$ to the left end. One gets an exceptional collection
\begin{equation}
\label{eq_OOOskew}
\{\O(-\bar c+\sum_{i=1}^n(r_i-1)\bar x_i),\O,\O(b \bar x_i)\}_{i=1\ldots n, b=1\ldots r_i-1}.
\end{equation}
Denote $\LL:=\O(-\bar c+\sum_{i=1}^n(r_i-1)\bar x_i)$, then $(\LL,\O)$ is an exceptional pair.
One has $$\Hom(\LL,\O)\cong \Hom(\O,\O((1-n)\bar c+\sum_i \bar x_i))=0$$ and 
$$\Ext^1(\LL,\O)\cong \Hom(\O,\LL(\bar \omega))^*\cong \Hom(\O,\O((n-3)\bar c+\sum_i (r_i-2)\bar x_i))^*\cong \k^{n-2}.$$
Therefore, pair $(\LL,\O)$ is vertex-like and its $\Ext$-quiver is the $m$-Kronecker quiver $\mathrm{K}_m$, where $m=n-2$. Hence $\TT=\langle \LL,\O\rangle\subset D^b(\coh\X)$ is equivalent to $D^b_0(\k \mathrm{K}_m)$, this subcategory is generated by an exceptional collection and therefore admissible. Also $\TT$ is clearly torsion-free. If $n=4$ then $m=2$ and $\TT$ is equivalent to $D^b(\coh\P^1)$ but is not curve-like.  

Suppose in addition that all $r_i=2$. Then 
$^\perp\TT$ is generated by completely orthogonal collection of line bundles $\O(\bar x_1),\ldots,\O(\bar x_n)$ and hence is torsion-free. In this case one has a semi-orthogonal decomposition of $D^b(\coh \X)$ in two torsion-free subcategories, and one of them  is equivalent to $D^b(\coh\P^1)$.
\end{example}

\begin{example}
    Let $X$ be a connected smooth projective curve of genus $g\ge 1$. Let $L$ be the family of all line bundles of degree $0$ on $X$. Clearly, $L$ is a vertex-like family. By Riemann-Roch Theorem, for any $\LL_1,\LL_2\in L$
    \begin{equation*}
    \dim\Ext^1(\LL_1,\LL_2)=
    \begin{cases} g-1 & \LL_1\not\cong \LL_2,\\
    g & \LL_1\cong \LL_2.
    \end{cases}    
    \end{equation*}
    By Corollary~\ref{cor_40}, one has equivalences
    $$\langle L\rangle\cong D^b_0(\k Q),\quad \langle L\rangle_{\Ab}\cong \moddo\k Q,$$
    where $Q$ is the following quiver: its set of vertices has the cardinality of $L$ (and of $\k$), any vertex has $g$ loops and any two distinct vertices are connected by $g-1$ arrows in both directions.    
\end{example}

\bibliographystyle{plain}
\bibliography{biblio_main.bib}

\end{document}